\numberwithin{table}{section}
\numberwithin{equation}{section}
\theoremstyle{plain}
\newtheorem{theorem}[equation]{Theorem}
\newtheorem{corollary}[equation]{Corollary}
\newtheorem{lemma}[equation]{Lemma}
\newtheorem{proposition}[equation]{Proposition}
\theoremstyle{definition}
\newtheorem{definition}[equation]{Definition}
\newtheorem{example}[equation]{Example}
\newtheorem{hypothesis}[equation]{Hypothesis}
\newtheorem{remark}[equation]{Remark}
\newcommand{\hdet}{\operatorname{hdet}}
\newcommand{\inv}{^{-1}}
\newcommand{\iso}{\cong}
\newcommand{\kk}{\Bbbk}
\newcommand{\tensor}{\otimes}
\newcommand{\mb}{\mathbb}
\newcommand{\op}{\operatorname{op}}
\newcommand{\NN}{\mathbb N}
\newcommand{\RR}{\mathbb R}
\newcommand{\ZZ}{\mathbb Z}
\DeclareMathOperator\Aut{Aut}
\DeclareMathOperator\adj{adj}
\DeclareMathOperator\Ext{Ext}
\DeclareMathOperator\GKdim{GKdim}
\newenvironment{psmatrix}
  {\left(\begin{smallmatrix}}
  {\end{smallmatrix}\right)}
\title[Quivers supporting  twisted Calabi-Yau algebras]{Quivers supporting twisted Calabi-Yau algebras}
\author[Gaddis]{Jason Gaddis}
\address{Miami University, Department of Mathematics, 301 S. Patterson Ave., Oxford, Ohio 45056}
\email{gaddisj@miamioh.edu}
\author[Rogalski]{Daniel Rogalski}
\address{University of California, San Diego, Department of Mathematics, 9500 Gilman Dr. \# 0112, La Jolla, CA 92093-0112}
\email{drogalsk@math.ucsd.edu}
\thanks{Rogalski was partially supported by the NSF grant
DMS-1201572 and the NSA grant H98230-15-1-0317.}
\subjclass[2000]{Primary:
16E65, 
16P90, 
16S38, 
16W50.  
Secondary:
16T05, 
16S40. 
}
\keywords{Twisted Calabi Yau algebra, Nakayama automorphism, AS regular algebra, GK dimension, matrix-valued Hilbert series, derivation-quotient algebra, superpotential}
\date{\today}
\begin{document}

\begin{abstract}
We consider graded twisted Calabi-Yau algebras of dimension 3 which are derivation-quotient algebras
of the form $A = \kk Q/I$, where $Q$ is a quiver and $I$ is an ideal of relations coming from taking partial derivatives of a twisted superpotential on $Q$.   We define the type $(M, P, d)$ of such an algebra $A$, where $M$ is the incidence matrix of the quiver, $P$ is the permutation matrix giving the action of the Nakayama automorphism of $A$ on the vertices of the quiver, and $d$ is the degree of the superpotential.  We study the question of what possible types
can occur under the additional assumption that $A$ has polynomial growth.  In particular, we are able to
give a nearly complete answer to this question when $Q$ has at most 3 vertices.
\end{abstract}

\maketitle

\section{Introduction}

Fix  an algebraically closed field $\kk$ of characteristic zero.   All algebras in this paper are assumed to be $\kk$-algebras.
Let $A^e = A \otimes_\kk A^{\op}$ be the \emph{enveloping algebra} of an algebra $A$.
Then left $A^e$-modules are identified with $(A, A)$-bimodules.
\begin{definition}
\label{def.CY}
An algebra $A$ is said to be \emph{twisted Calabi-Yau} (CY) of dimension $d$ if
\begin{enumerate}
\item the left $A^e$-module $A$ has a finite projective $A^e$-module resolution by finitely generated
projective modules; and
\item there exists an invertible $(A, A)$-bimodule $U$ such that
\[\Ext_{A^e}^i (A, A^e) \cong \begin{cases} 0 & i \neq d \\ U & i = d \end{cases}\]
as right $A^e$-modules.
\end{enumerate}
The algebra $A$ is \emph{Calabi-Yau} if the definition above holds with $U = A$.
\end{definition}

Calabi-Yau algebras were originally defined by Ginzburg \cite{Gi} and have been the focus of much research since.  Of particular interest are (twisted) Calabi-Yau algebras defined by (twisted) superpotentials on quivers, and that is the setting on which we focus in this paper.  Let $Q = (Q_0, Q_1)$ be a quiver with finite vertex set $Q_0$ and finite arrow set $Q_1$, with multiple arrows and loops allowed
(see \cite{ASS} for background on quivers).
Let $\kk Q$ be its path algebra.  Note that our convention is to compose paths from left to right.  Given a path $p = a_1 a_2 \dots a_n$ of $Q$ and $a \in Q_1$, the derivation operator $\partial_a$ is defined on paths as $\partial_a(p) = a\inv p = a_2 \dots a_n$ if
$a_1 = a$ and $0$ otherwise, and extended linearly.  Given an automorphism $\sigma$ of $\kk Q$, a \emph{$\sigma$-twisted superpotential}
$\omega$ is a homogeneous element of $\kk Q$ of degree $d$ that is invariant under the linear map which acts
on paths by $a_1a_2\cdots a_d \mapsto \sigma(a_d)a_1\cdots a_{d-1}$.
We simply call $\omega$ a superpotential when $\sigma$ is the identity.
(Note that in \cite{BSW} the definition requires $\omega$ to be $(-1)^d$-invariant.  We are following the sign convention used in \cite{MS} instead, which seems more appropriate for non-Koszul algebras).
Given a $\sigma$-twisted superpotential $\omega$ in $\kk Q$, the corresponding \emph{derivation-quotient algebra} is
$\kk Q/ (\partial_{a}(\omega) : a \in Q_1)$, that is, the path algebra with relations given by the partial derivatives of the
superpotential at all arrows in $Q$.

In nice  cases, such derivation-quotient algebras are twisted Calabi-Yau algebras of dimension $3$, and when this happens, we call
$\omega$ \emph{good}.  Deciding which twisted superpotentials are good is a sensitive problem.
For example, Bocklandt proved that for a given quiver and length $d$, if $Q$ has any good (non-twisted) superpotential of degree $d$, then all superpotentials outside of a set of measure $0$ in the space of degree $d$ superpotentials are good \cite[Corollary 4.4]{Bo}.

In this paper we always consider a path algebra $\kk Q$ as an $\mb{N}$-graded algebra $\kk Q = \bigoplus_{n \geq 0} \kk Q_n$ where $\kk Q_n$ is the $\kk$-span of paths of length $n$.  The focus on algebras defined by superpotentials is justified by the following result.
\begin{theorem}[{\cite[Theorem 6.8, Remark 6.9]{BSW}}]
\label{thm.bsw}
Let $A = \kk Q/I$ for a connected quiver $Q$ and a homogeneous ideal $I$.  If $A$ is twisted Calabi-Yau of dimension $3$, then
$I$ is generated by relations all of the same degree $r$ and in fact $A$ is $r$-Koszul.  Moreover, $A$ is a derivation-quotient algebra for some twisted superpotential of degree $r+1$.
\end{theorem}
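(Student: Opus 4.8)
The plan is to read off everything from the minimal graded projective bimodule resolution of $A$. Since $A$ is twisted Calabi--Yau of dimension $3$ it is homologically smooth, so it admits a finite projective $A^e$-resolution by finitely generated modules; as $A = \kk Q/I$ with $I$ homogeneous and $Q$ connected, $A$ is connected graded with semisimple degree-zero part $S := \kk Q_0$ (we may assume $I$ has no component in degree $0$ or $1$), so there is a \emph{minimal} graded projective $A^e$-resolution $P_\bullet \to A$, of length $\operatorname{pd}_{A^e} A = 3$ since $\Ext^3_{A^e}(A, A^e) \neq 0$ and $\Ext^{>3}_{A^e}(A, A^e) = 0$. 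Minimality forces the low terms: $P_0 = A \otimes_S A$, $P_1 = A \otimes_S V \otimes_S A$ with $V := \kk Q_1$, and $P_2 = A \otimes_S W \otimes_S A$, where $W$ is the (a priori only graded) space of minimal relations, with $d_1, d_2$ the usual ``Koszul-type'' bimodule differentials, e.g.\ $d_1(a \otimes v \otimes b) = av \otimes b - a \otimes vb$.

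First I would exploit the self-duality encoded in the Calabi--Yau condition. Applying $(-)^\vee := \operatorname{Hom}_{A^e}(-, A^e)$ to $P_\bullet$ and using Definition~\ref{def.CY}, the complex $P_\bullet^\vee$, reindexed so that $P_3^\vee$ lands in homological degree $0$, is a projective resolution of the invertible bimodule $U := \Ext^3_{A^e}(A, A^e)$ by finitely generated projectives, and it is minimal since duality preserves minimality over a connected graded algebra. An invertible graded $(A,A)$-bimodule has the form $U \cong {}^{1}\!A^{\mu}$ up to a grading shift, for a graded algebra automorphism $\mu$ (which permutes $Q_0$); hence the minimal bimodule resolution of $U$ is the $\mu$-twist of $P_\bullet$, shifted. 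Matching $P_i^\vee$ with the twisted, shifted copy of $P_{3-i}$ and using that $P_0, P_1$ are generated in the single degrees $0, 1$: from $P_3^\vee \cong {}^{\mu}P_0(\text{shift})$ we get that $P_3$ is generated in a single degree $\ell$, and from $P_2^\vee \cong {}^{\mu}P_1(\text{shift})$ we get that $P_2$ is generated in a single degree, so $W$ is concentrated in one degree $r$ and $I = (W)$ is generated in degree $r$ --- the first assertion. Moreover the same matching identifies $d_3 \colon P_3 \to P_2$ with (a twist of) the transpose of $d_1$, which is linear; hence $d_3$ is linear and $\ell = r+1$.

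Having established that the minimal bimodule resolution has terms $P_0, P_1, P_2, P_3$ generated in degrees $0, 1, r, r+1$, I would apply $- \otimes_A S$ to $P_\bullet$: since each $P_i$ is right-projective this produces a minimal graded projective resolution of the left $A$-module $S$ whose $i$-th term is generated in degree $0, 1, r, r+1$ for $i = 0, 1, 2, 3$. This is precisely the shape appearing in the definition of an $r$-Koszul algebra, so $A$ is $r$-Koszul.

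The final and main step is to extract the superpotential. A chain isomorphism $\phi_\bullet$ realizing the self-duality, together with the canonical identification $P_0^\vee \cong A \otimes_S A$, turns $\phi_3$ into a distinguished bimodule element whose vertex-components assemble into an element $\omega \in \bigoplus_{v \in Q_0} e_v (\kk Q)_{r+1} e_v$. Since $d_3$ is linear and $W \subseteq (\kk Q)_r$, one can write $d_3(1 \otimes 1_v \otimes 1) = \sum_{a \in Q_1} a \otimes \rho_a \otimes 1 + \sum_{a \in Q_1} 1 \otimes \rho'_a \otimes a$ with $\rho_a, \rho'_a \in W$; unwinding the compatibility of $\phi_\bullet$ with $d_1, d_2, d_3$ identifies $\rho_a$ with $\partial_a(\omega)$ and forces $\omega$ to be invariant under the $\sigma$-twisted cyclic rotation, where $\sigma$ is the automorphism of $\kk Q$ lifting $\mu$; that is, $\omega$ is a $\sigma$-twisted superpotential of degree $r+1$. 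As $W = \operatorname{span}\{\partial_a(\omega) : a \in Q_1\}$ generates $I$, we conclude $A = \kk Q /(\partial_a(\omega) : a \in Q_1)$. The genuinely delicate part is this last one: one must choose $\phi_\bullet$ compatibly with the combinatorially rigid ``Koszul-type'' differentials and then push the grading shifts, the vertex permutation, and the sign conventions through carefully enough to land exactly on the twisted cyclic invariance used here (this is where the discrepancy between the $(-1)^d$-convention of \cite{BSW} and the convention of \cite{MS} must be reconciled).
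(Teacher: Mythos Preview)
The paper does not supply its own proof of this statement: Theorem~\ref{thm.bsw} is quoted verbatim as \cite[Theorem 6.8, Remark 6.9]{BSW} and used as input to the classification, so there is no in-paper argument to compare against. Your sketch is, in broad outline, the argument of Bocklandt--Schedler--Wemyss: take the minimal graded $A^e$-resolution, use the twisted Calabi--Yau condition to identify it (up to twist and shift) with its $A^e$-dual, read off that $P_2$ and $P_3$ are generated in single degrees $r$ and $r+1$, deduce $r$-Koszulity, and then extract the twisted superpotential from the top differential.

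A couple of points deserve care if you want this to stand on its own. First, the assertion that an invertible graded $(A,A)$-bimodule is ${}^1A^\mu$ up to shift is not automatic; in the present paper it is invoked from \cite[Proposition 5.2]{RR1}, and in \cite{BSW} one works over a semisimple base $S$ and needs the analogous structure result there. Second, the passage ``unwinding the compatibility of $\phi_\bullet$ with $d_1,d_2,d_3$ identifies $\rho_a$ with $\partial_a(\omega)$ and forces $\omega$ to be invariant under the $\sigma$-twisted cyclic rotation'' is exactly the content of the cited theorem and is where all the work lies; you correctly flag it as delicate, but as written it is an assertion rather than an argument. If you intend this as a proof rather than a roadmap, that step needs to be carried out explicitly (including the identification $W=\operatorname{span}\{\partial_a\omega\}$, which uses that the chosen $\phi_\bullet$ is an isomorphism and not merely a quasi-isomorphism).
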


When the quiver $Q$ has only one vertex, a twisted Calabi-Yau derivation-quotient algebra of dimension $3$  is the same as an Artin-Schelter regular algebra, as defined in Section~\ref{sec.motive} below \cite[Lemma 1.2]{RRZ}.
The classification of such algebras is known with the additional assumption that the algebras
have finite GK dimension \cite{ATV1,AS}.   Following this program, one would ultimately
like to have some kind of classification of twisted Calabi-Yau derivation-quotient algebras of dimension $3$ with finite GK dimension.
Since some quivers do not have any good twisted superpotentials, or do not lead to an algebra with finite GK dimension, the first question in such a project is which quivers can occur.  In this paper, we solve this problem for quivers with two or three vertices (though there remain a few minor gaps in our understanding of the three vertex case).

In fact, we track more information than just the quiver.  Suppose that $A = \kk Q/I$ is a twisted Calabi-Yau derivation-quotient algebra for some twisted superpotential $\omega$ of degree $d$.  Fix some labeling of the vertices in $Q_0$ by $1, 2, \dots, m$ and let $M = M_Q$ be the incidence matrix of $Q$, where $M_{ij}$ is equal to the number of arrows in $Q$ from $i$ to $j$.   In this setting, the bimodule $U$ in the definition
of twisted Calabi-Yau must have the form ${}^1 A^{\mu}$ for some automorphism $\mu$ of $A$, the \emph{Nakayama automorphism} (see
\cite[Proposition 5.2]{RR1}).  In fact, lifting $\mu$ to an automorphism of $\kk Q$, $\omega$ must be $\mu^{-1}$-twisted (see \cite[Theorem 4.4]{MS} for the case of one vertex, noting that for Mori and Smith $\mu^{-1}$ is the Nakayama automorphism).
The Nakayama automorphism $\mu$ of $A$ permutes the idempotent trivial paths $e_i$, and induces a permutation of $\{1, \dots, m \}$ we also call $\mu$, where $\mu(e_i) = e_{\mu(i)}$.   We associate a permutation matrix $P$ such that $P_{ij} = \delta_{\mu(i),j}$.  We call $(M, P, d)$ the \emph{type} of $A$.  In this paper our main concern is which types can occur when $Q$ has few vertices.  We do not attempt to classify the possible twisting automorphisms $\mu$, only their actions
on the vertices.  Note that if two types differ only by a relabeling of the vertices, we consider them the same.

In the predecessor paper \cite{RR1}, the authors studied how conditions on the quiver relate to the growth of a
derivation-quotient algebra.  The primary technique used is the matrix-valued Hilbert series associated to the adjacency matrix of a quiver.
For any graded algebra of the form $A = \kk Q/I$, with fixed labeling of the vertices of $Q$, we write the trivial paths in $Q$
as $e_1, \dots, e_m$, so that $A_0 = \kk e_1 + \dots + \kk e_m$, where the $e_i$ are pairwise orthogonal primitive idempotents.
We then define the matrix-valued Hilbert series of $A$ as the formal power series $h_A(t) = \sum_{n=0}^{\infty} H_n t^n \in M_m(\kk)\llbracket t \rrbracket$, where $(H_n)_{ij} = \dim_k e_i A_n e_j$.  There is a convenient formula for this Hilbert series in terms of the type.
\begin{theorem}[{\cite[Propositions 2.8, 8.2, 8.10]{RR1}}]
\label{thm.hilb}
Let $Q$ be a connected quiver, and let
$A = \kk Q /I$ be a graded twisted Calabi-Yau algebra of dimension $3$ of type $(M, P, d)$, with $\GKdim(A) = 3$.
\begin{enumerate}
\item The matrices $M$ and $P$ commute, and we have $h_A(t) = (p(t))^{-1}$, where
\[
	p(t) = I- M t+ PM^T t^{d-1}- P t^d \in M_m(\kk)[t].
\]
Every zero of $\det p(t) \in \kk[t]$ is a root of unity and $\det p(t)$ vanishes at $t = 1$ to order at least $3$.
\item If $M$ is a normal matrix, then either either $d = 3$ or $d = 4$,
and the spectral radius $\rho(M)$ of $M$ satisfies $\rho(M) = 6-d$.
\end{enumerate}
\end{theorem}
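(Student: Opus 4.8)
The plan is to read the type off the minimal bimodule resolution of $A$ (available from Theorem~\ref{thm.bsw}), and then turn the Calabi--Yau self-duality and the finiteness of $\GKdim$ into the stated spectral conditions.

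First I would write $A=\kk Q/(\partial_a\omega:a\in Q_1)$ for a $\mu^{-1}$-twisted superpotential $\omega$ of degree $d$, where $\mu$ is (a lift to $\kk Q$ of) the Nakayama automorphism, and record the minimal $A^e$-projective resolution $0\to P_3\to P_2\to P_1\to P_0\to A\to 0$ with $P_i=A\tensor_{A_0}V_i\tensor_{A_0}A$, where $V_0=\kk Q_0$, $V_1=\kk Q_1$, $V_2$ is the degree-$(d-1)$ span of the relations $\partial_a\omega$, and $V_3=\kk\omega$ in degree $d$; being twisted Calabi--Yau of dimension $3$ is precisely the self-duality of this complex under $\operatorname{Hom}_{A^e}(-,A^e)$, up to the Nakayama twist and a shift of degree by $d$. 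A graded automorphism of $A$ permutes the $e_i$ by $\mu$ and restricts to an $A_0$-bimodule isomorphism $A_1=\kk Q_1\to A_1$, so $M_{ij}=M_{\mu(i)\mu(j)}$, i.e.\ $MP=PM$. For a graded $A_0$-bimodule $W$ let $[W](t)$ be the matrix with $(i,j)$-entry $\sum_n(\dim_\kk e_iW_ne_j)t^n$, so that $[A\tensor_{A_0}W](t)=h_A(t)[W](t)$; applying $-\tensor_A A_0$ to the resolution gives a minimal resolution of ${}_AA_0$, and equating alternating sums of matrix Hilbert series yields $I=h_A(t)\sum_i(-1)^i[V_i](t)$. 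Since $[V_0]=I$, $[V_1]=Mt$, $[V_3]=Pt^d$ (as $\omega\in\bigoplus_i e_i(\kk Q)_de_{\mu(i)}$), and $[V_2]=M^TPt^{d-1}=PM^Tt^{d-1}$ (as $\partial_a\omega$ for $a\colon i\to j$ lies in $e_j(\kk Q)_{d-1}e_{\mu(i)}$), this is exactly $h_A(t)^{-1}=p(t)$.

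Next I would dualize the resolution: it matches $V_i$ with $V_{3-i}$ up to the Nakayama twist and the shift by $d$, which at the level of matrix Hilbert series reads $[V_i](t)=Pt^d[V_{3-i}](t^{-1})^T$ (one checks this on each $V_i$ using $MP=PM$); summing with signs yields the functional equation $p(t)=-Pt^dp(t^{-1})^T$, hence $\det p(t)=\pm t^{md}\det p(t^{-1})$, so $\det p(t)\in\ZZ[t]$ has constant term $1$, is (anti)palindromic, and has root set stable under $\alpha\mapsto\alpha^{-1}$. Via the Smith normal form of $p(t)$ over $\kk[t]$, every zero of $\det p(t)$ is actually a pole of $h_A(t)=p(t)^{-1}$, hence a pole of the series $\sum_n(\dim_\kk e_iA_ne_j)t^n$ for some $i,j$; as $\GKdim A=3<\infty$ these series have polynomially bounded coefficients, so none of those poles lies in the open unit disk, and the palindromy then puts every zero of $\det p(t)$ on the unit circle, whence Kronecker's theorem forces them all to be roots of unity. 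Finally, $\GKdim A=3$ together with rationality and nonnegativity of $H_A(t)=\mathbf 1^Th_A(t)\mathbf 1$ makes $t=1$ a pole of $H_A$ of order exactly $3$, so $\det p(t)$ vanishes at $t=1$ to order at least $3$. I expect this roots-of-unity step --- in particular handling the factors of $\det p(t)$ that a priori could cancel in $\adj p(t)$ --- to be the main obstacle, and the Smith-form observation is what I would use to push it through.

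For (2), assume $M$ is normal. Then $M$, $M^T=M^*$, and the orthogonal matrix $P$ pairwise commute and are normal, hence are simultaneously unitarily diagonalizable, so $\det p(t)=\prod_{i=1}^m f_i(t)$ with $f_i(t)=1-\lambda_it+\zeta_i\overline{\lambda_i}t^{d-1}-\zeta_it^d$, the $\lambda_i$ ranging over the eigenvalues of $M$ and the $\zeta_i$ over the (root-of-unity) eigenvalues of $P$, and by (1) every $f_i$ has all roots among the roots of unity. Writing $H_A(t)=\sum_i c_if_i(t)^{-1}$ with $c_i=|\mathbf 1^Tu_i|^2\ge 0$, the order-$3$ pole at $t=1$ forces $\operatorname{ord}_{t=1}f_i\ge 3$ for some $i$, and expanding $f_i$, $f_i'$, $f_i''$ at $t=1$ shows this requires $\zeta_i=1$ and $\lambda_i=d/(d-2)$; being a rational eigenvalue of the integer matrix $M$, $\lambda_i$ is an integer, so $d-2\mid 2$, giving $d\in\{3,4\}$ and $\lambda_i=6-d$, hence $\rho(M)\ge 6-d$. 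For the reverse inequality I would set $\rho=\rho(M)$ (real and nonnegative by Perron--Frobenius), pick $0\ne v\ge 0$ with $Mv=\rho v$, and put $w=\sum_{j\ge 0}P^jv$: this is a nonzero nonnegative vector with $Mw=\rho w$ and $Pw=w$, so $(\rho,1)$ occurs among the $(\lambda_i,\zeta_i)$ and $f_\rho(t)=1-\rho t+\rho t^{d-1}-t^d$ divides $\det p(t)$, whence all roots of $f_\rho$ are roots of unity. Factoring $-f_\rho(t)=(t-1)q(t)$ with $q(t)=t^{d-1}+(1-\rho)(t^{d-2}+\cdots+t)+1$ monic in $\RR[t]$, the same holds for $q$, so its roots come in complex-conjugate pairs together with possibly $\pm 1$, giving $q(1)=\prod_\xi(1-\xi)\ge 0$; since $q(1)=2-(\rho-1)(d-2)$, this yields $(\rho-1)(d-2)\le 2$, i.e.\ $\rho\le 6-d$. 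Combining, $\rho(M)=6-d$, and this subtlety about cancellation never arises here because $M$ normal makes $\det p(t)$ split completely into the explicit factors $f_i(t)$.
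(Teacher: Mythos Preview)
The paper does not prove this theorem; it is quoted from \cite{RR1} (Propositions 2.8, 8.2, 8.10), so there is no in-paper argument to compare against. Your proposal is essentially a correct reconstruction of what such a proof must look like, with one small slip and one step worth making explicit.

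The slip: your functional equation $p(t)=-Pt^d p(t^{-1})^T$ is off; a direct check (using $M^TP=PM^T$) gives $p(t)=-t^d\,p(t^{-1})^T P$, with $P$ on the right. This is harmless, since only $\det p(t)=\pm t^{md}\det p(t^{-1})$ is used downstream.

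The step worth making explicit: in part (1), after establishing (anti)palindromy you need every zero $\alpha$ of $\det p(t)$ to appear as an honest pole of some entry of $h_A(t)=p(t)^{-1}$, so that polynomial growth of the coefficients rules out $|\alpha|<1$. Your Smith-form remark is exactly the right mechanism, but deserves one line: if $p=UDV$ with $U,V\in\GL_m(\kk[t])$ and $D=\operatorname{diag}(d_1,\dots,d_m)$, then $D^{-1}=V\,p^{-1}\,U$; since $U,V$ are polynomial, any pole of $D^{-1}$ at $\alpha$ (and every zero of $\det p=\pm\prod d_i$ produces one) must already be a pole of $p^{-1}$. This is precisely the obstacle you anticipated, and your resolution is clean.

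In part (2) your argument is correct and self-contained. The simultaneous unitary diagonalization of $M$, $M^T$, $P$ is legitimate since they pairwise commute and are normal; writing $H_A(t)=\sum_i |\mathbf 1^T u_i|^2/f_i(t)$, the pole of $H_A$ at $t=1$ having order $3$ forces some $f_i$ (with nonzero coefficient) to vanish there to order $\ge 3$, since a sum of terms with poles of order $\le 2$ cannot have a pole of order $3$. Reading off $\zeta_i=1$ and $\lambda_i=d/(d-2)$ from $f_i''(1)=f_i'(1)=0$, and then invoking integrality to get $d-2\mid 2$, is exactly right. For the reverse inequality, one point to add: your vector $w=\sum_j P^j v$ is automatically an eigenvector of $M^T$ with eigenvalue $\rho$, because $M$ is normal and real, so on the $\rho$-eigenspace of $M$ the operator $M^T=M^*$ acts by $\overline\rho=\rho$; hence $w$ really is a simultaneous eigenvector and $f_\rho(t)$ genuinely divides $\det p(t)$. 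The inequality $q(1)=2-(\rho-1)(d-2)\ge 0$ then gives $\rho\le d/(d-2)$, which equals $6-d$ precisely for $d\in\{3,4\}$, closing the argument.
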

\noindent
We conjecture that any graded twisted Calabi-Yau algebra $\kk Q/I$ of dimension $n$ with finite GK dimension has $\GKdim(A) = n$, but this is not known in general. In particular, this holds for all known examples of graded twisted Calabi-Yau algebras, including Artin-Schelter regular algebras and those arising from the various constructions presented in Section \ref{sec.motive}. So, we will classify certain twisted Calabi-Yau algebras of dimension $3$ that also have $\GKdim(A) = 3$, but really this should be the classification of all such algebras with finite GK dimension.

For the remainder of the paper we focus on algebras $A$ satisfying the following assumptions.
\begin{hypothesis}
\label{hyp.scy}
Let $A \cong \kk Q/(\partial_a \omega : a \in Q_1)$ be a derivation-quotient algebra on the connected quiver $Q$ with $\sigma$-twisted superpotential $\omega$ of degree $d$, for some automorphism $\sigma$ of $\kk Q$.  Assume that $A$ is twisted Calabi-Yau of dimension $3$ with
$\GKdim(A) = 3$, that the number $m = |Q_0|$ of vertices is two or three, and that $d = 3$ or $d = 4$.
\end{hypothesis}
\noindent Note that when the type $(M, P, d)$ of $A$ has a normal matrix $M$, then by Theorem~\ref{thm.hilb}
the hypothesis $d = 3$ or $d = 4$ is a consequence of the other assumptions in Hypothesis~\ref{hyp.scy}. For Artin-Schelter regular algebras, the hypothesis $d = 3$ or $d = 4$ is extraneous and for this reason we suspect the same is true in general.

We now describe our strategy in this paper for finding the possible types of algebras satisfying Hypothesis~\ref{hyp.scy}.   
First, we classify those types $(M, P, d)$ that satisfy the necessary conditions coming from Theorem~\ref{thm.hilb} in order to be the type of such a twisted Calabi-Yau algebra.
This is a difficult problem only in the three vertex case where $P = I$, where we use the software program Maple to help whittle down the possibilities.
Second, for the list of such types, we either find an explicit example of a twisted Calabi-Yau algebra with that type, or rule it out for some other reason.
Most of the types that occur can be realized by a skew group algebra of an action of a group of order $2$ or $3$ on a Artin-Schelter regular algebra of
dimension $3$, where the corresponding quiver is the McKay quiver.  However, there is also an infinite family of types on three vertices which we need to realize using the process of \emph{quiver mutation}.   In Section~\ref{sec.motive} we review these two methods for obtaining twisted Calabi-Yau algebras from known ones, as well as Ore extensions, which we use in a few cases to realize types.
Section~\ref{sec.key} proves a few other key results we need in the analysis, especially a limitation on the number of loops at
a vertex, which we describe below.

In  Section~\ref{sec.2vert} we study the two-vertex case, which
can be handled by elementary means.
\begin{theorem}[Lemma~\ref{lem.2vert1}, Lemma~\ref{lem.2vert2}]
\label{thm.2vert}
The possible types $(M, P, d)$ of twisted Calabi-Yau algebras $A$ satisfying Hypothesis~\ref{hyp.scy} with $|Q_0| = 2$
are as follows:
\begin{gather*}
M \in \left\lbrace \begin{psmatrix}1 & 2 \\ 2 & 1\end{psmatrix},
\begin{psmatrix}2 & 1 \\ 1 & 2\end{psmatrix},
\begin{psmatrix}0 & 3 \\ 3 & 0\end{psmatrix} \right\rbrace,
	P = \begin{psmatrix} 0 & 1 \\ 1 & 0 \end{psmatrix}, d = 3; \qquad
M = \begin{psmatrix}1 & 1 \\ 1 & 1\end{psmatrix}, P = \begin{psmatrix} 0 & 1 \\ 1 & 0 \end{psmatrix}, d=4; \\
M \in \left\lbrace \begin{psmatrix}1 & 2 \\ 2 & 1\end{psmatrix},
	\begin{psmatrix}2 & 1 \\ 1 & 2\end{psmatrix} \right\rbrace, P= I, d = 3; \qquad
M \in \left\lbrace \begin{psmatrix}1 & 1 \\ 1 & 1\end{psmatrix},
    \begin{psmatrix}0 & 2 \\ 2 & 0\end{psmatrix} \right\rbrace, P = I, d =4.
\end{gather*}
Every type can be realized by a skew group algebra of the form $B \# \mb{Z}_2$ where
$B$ is Artin-Schelter regular of dimension $3$.
\end{theorem}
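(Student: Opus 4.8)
The plan is to attack the two-vertex case in two halves, matching the two cited lemmas. First I would establish the list of \emph{numerically possible} types, using only the constraints from Theorem~\ref{thm.hilb}: namely that $M$ and $P$ commute, that every root of $\det p(t)$ is a root of unity, that $\det p(t)$ vanishes to order $\geq 3$ at $t=1$, and — since a $2\times 2$ nonnegative matrix whose graph is strongly connected (the quiver is connected and the algebra is CY, hence every vertex has an arrow in and out) is automatically normal after a diagonal conjugation, or directly because the two off-diagonal entries must be equal for $M,P$ to commute when $P = \begin{psmatrix}0&1\\1&0\end{psmatrix}$ — that $\rho(M) = 6-d \in \{2,3\}$ with $d \in \{3,4\}$. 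With $P$ being one of the two $2\times 2$ permutation matrices, $M = \begin{psmatrix}a&b\\c&e\end{psmatrix}$ commuting with $P$ forces either $a=e$ (when $P = \operatorname{diag}(1,1)$, no constraint) or $a = e$ and $b = c$ (when $P$ is the swap). In either case one reduces to finitely many nonnegative integer matrices of spectral radius $2$ or $3$, and then one checks the root-of-unity and order-of-vanishing conditions on the scalar polynomial $\det p(t)$ by hand; this is a short finite computation that cuts the candidate list down to exactly the eight types listed.

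Second, for the \emph{realizability} half, I would exhibit each of the eight types as the type of a skew group algebra $B \# \mathbb{Z}_2$ with $B$ Artin–Schelter regular of dimension $3$. Here the key input, which the excerpt flags as coming from Section~\ref{sec.motive}, is that for a finite group $G$ acting on an AS-regular algebra $B$ of dimension $3$ (with the action having no "quasi-reflections", or at least trivial homological determinant so that $B\#G$ is again twisted CY of dimension $3$), the algebra $B \# G$ is a twisted Calabi–Yau derivation-quotient algebra whose quiver is the McKay quiver of the $G$-action and whose superpotential degree equals that of $B$. So it suffices, for each target $(M,P,d)$, to name a concrete $B$ — the three standard families of AS-regular algebras of dimension $3$ generated in degree $1$ are the cubic ones (global dimension-$3$ relations of degree $3$, so $d = 4$, e.g. the Sklyanin-type or the enveloping-algebra-type quadratic-dual pictures) and the quadratic ones (relations of degree $2$, so $d = 3$) — together with an order-$2$ automorphism, and compute its McKay quiver and the induced vertex permutation $P$.

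Concretely I would match them up as follows. For the $d = 3$ types: a quadratic AS-regular algebra on $2$ generators ($\rho = 2$ since the incidence matrix of its one-vertex quiver is $(2)$) with a $\mathbb{Z}_2$-action either scaling both generators by $-1$, or fixing one and negating the other; the former gives $P = I$ and $M = \begin{psmatrix}2&0\\0&2\end{psmatrix}$-type data which, after accounting for which arrows survive, lands on $\begin{psmatrix}1&1\\1&1\end{psmatrix}$ or the off-diagonal $\begin{psmatrix}0&2\\2&0\end{psmatrix}$ — wait, one must be careful: the diagonalizable vs. "permutation" character of the action determines whether the McKay quiver is the connected double-loop-free one or the "bipartite" one, and whether $P = I$ or $P$ is the swap. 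I would simply run through the small menu of order-$2$ linear actions on $\kk\langle x,y\rangle$ (up to change of basis: $\operatorname{diag}(1,-1)$ and $\operatorname{diag}(-1,-1) = -I$) on a suitable quadratic $B$ to produce the four $d=3$ entries, and the analogous order-$2$ actions on a cubic AS-regular $B$ on $2$ generators (incidence matrix $(2)$, but $\rho(M)=3$ shows up because $d=4$) to produce the four $d=4$ entries, in each case reading $M$ off the McKay quiver, reading $d$ off $B$, and reading $P$ off how $\mathbb{Z}_2$ permutes the two isotypic components $\kk e_0 \oplus \kk e_1$ of $B$ under the Nakayama automorphism — which, for a skew group algebra, is the Nakayama automorphism of $B$ twisted by the action, so its vertex permutation is controlled by $\hdet$ of the $G$-action.

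The main obstacle, I expect, is not the numerics (genuinely routine) but the bookkeeping in the realizability step: for a skew group algebra $B \# \mathbb{Z}_2$ one must correctly compute the McKay quiver (it is the quiver whose vertices are the two irreducible $\mathbb{Z}_2$-representations and whose arrows from $i$ to $j$ count the multiplicity of the $j$-th irrep in $V_1 \otimes (i$-th irrep$)$, where $V_1$ is the degree-$1$ part of $B$ as a $\mathbb{Z}_2$-module), and one must correctly identify the resulting Nakayama automorphism and hence $P$. Getting $P$ right is the delicate point: it requires knowing the homological determinant of the $\mathbb{Z}_2$-action on $B$ and feeding it through the formula for the Nakayama automorphism of a skew group algebra (as recalled in Section~\ref{sec.motive}), since a nontrivial $\hdet$ is exactly what makes $P$ the swap rather than the identity for an action by $-I$. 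Once the dictionary "(action on $B$) $\leadsto$ (McKay quiver, $\hdet$) $\leadsto (M, P, d)$" is in hand, verifying that the eight numerically-surviving types are precisely the eight realized ones is a finite check, completing the proof.
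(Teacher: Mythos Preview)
Your overall two-step strategy matches the paper's, but there is a real gap in the $P=I$ half of the numerical argument. You assert that a $2\times 2$ nonnegative matrix on a connected quiver is ``automatically normal after a diagonal conjugation'' and then invoke $\rho(M)=6-d$. But Theorem~\ref{thm.hilb2}(3) requires $M$ itself to be normal; being diagonally similar to a symmetric matrix is not enough (conjugating $p(t)$ by a diagonal $D$ does not send $M^T$ to $(D^{-1}MD)^T$, so you cannot transport the argument). Without normality you have no bound on $\rho(M)$ and hence no reduction to finitely many candidates, so your ``short finite computation'' never gets off the ground. The paper closes this gap directly: writing $M=\begin{psmatrix}a&b\\c&d\end{psmatrix}$ one computes $\left.\tfrac{d}{dt}\det p(t)\right|_{t=1}=s(b-c)^2$, and since $\det p(t)$ must vanish at $t=1$ to order at least $3$, this forces $b=c$. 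Only then is $M$ genuinely symmetric, after which the spectral-radius and eigenvalue-interval constraints (Theorem~\ref{thm.hilb2}(3),(4)) reduce everything to a short Pythagorean-triple check. That eigenvalue-interval condition is also what kills $M=\begin{psmatrix}0&3\\3&0\end{psmatrix}$ when $P=I$, $d=3$ (its other eigenvalue is $-3\notin[-1,3]$), and a direct root computation is what kills $M=\begin{psmatrix}0&2\\2&0\end{psmatrix}$ when $P$ is the swap and $d=4$; your sketch does not isolate either exclusion.

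The realizability half is on the right track but the generator/degree correspondence is reversed. AS-regular algebras of dimension $3$ with a degree-$3$ superpotential have \emph{three} degree-one generators (quadratic relations), while those with a degree-$4$ superpotential have \emph{two} (cubic relations); equivalently, for the one-vertex quiver $\rho(M)$ equals the number of generators, so $\rho=3\Leftrightarrow d=3$ and $\rho=2\Leftrightarrow d=4$. Thus the $d=3$ targets are realized by $\mathbb{Z}_2$ acting on three-variable algebras (e.g.\ $\kk[x,y,z]$ via $\operatorname{diag}(1,1,-1)$, $\operatorname{diag}(1,-1,-1)$, $\operatorname{diag}(-1,-1,-1)$, or a coordinate swap), and the $d=4$ targets by $\mathbb{Z}_2$ acting on two-variable algebras; your proposed actions $\operatorname{diag}(1,-1)$ and $-I$ on two variables belong to the $d=4$ column, not $d=3$. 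The paper simply tabulates, for each of the eight types, an explicit pair $(\omega,\sigma)$ and reads off $M$ from the McKay quiver and $P$ from whether $\hdet(\sigma)=1$ or $-1$; your dictionary via $\hdet$ is exactly the right mechanism, you just need to attach the correct $B$'s to the correct $d$'s.
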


In Section~\ref{sec.3vert1} we move on to the three vertex case where the Nakayama automorphism does not act trivially on the vertices.
Since the permutation matrix $P$ is nontrivial in this case, and $M$ and $P$ commute by Theorem~\ref{thm.hilb}, this restricts the form of $M$
enough to again make the possible types satisfying all of the conditions in Theorem~\ref{thm.hilb} fairly limited and classifiable more or less by hand.
\begin{theorem}[Proposition~\ref{prop.3vert1}, Proposition~\ref{prop.3vert2}]
\label{thm.3vert-twist}
If $A$ is twisted Calabi-Yau satisfying Hypothesis~\ref{hyp.scy} with $|Q_0| = 3$ and type $(M, P, d)$ with $P \neq I$, then
the type is one of the following:
\begin{gather*}
M \in \left\lbrace
\begin{psmatrix}
	2 & 1 & 0 \\
	0&  2 & 1 \\
	1 & 0 & 2
\end{psmatrix},
\begin{psmatrix}
	1 & 0 & 2 \\
	2&  1 & 0 \\
	0 & 2 & 1
\end{psmatrix},
\begin{psmatrix}
	0 & 2 & 1 \\
	1&  0 & 2 \\
	2 & 1 & 0
\end{psmatrix},
\begin{psmatrix}
	1 & 1 & 1 \\
	1 & 1 & 1 \\
	1 & 1 & 1
\end{psmatrix} \right\rbrace,
P = \begin{psmatrix}
	0 & 1 & 0 \\
	0 & 0 & 1 \\
	1 & 0 & 0
\end{psmatrix}, d = 3;  \\
M \in \left\lbrace
\begin{psmatrix}
	0 & 1 & 1 \\
	1 & 0 & 1 \\
	1 & 1 & 0
\end{psmatrix},
\begin{psmatrix}
	1 & 1 & 0 \\
	0 & 1 & 1 \\
	1 & 0 & 1
\end{psmatrix},
\begin{psmatrix}
	1 & 0 & 1 \\
	1 & 1 & 0 \\
	0 & 1 & 1
\end{psmatrix},
\begin{psmatrix}
	0 & 2 & 0 \\
	0 & 0 & 2 \\
	2 & 0 & 0
\end{psmatrix} \right\rbrace,
P = \begin{psmatrix}
	0 & 1 & 0 \\
	0 & 0 & 1 \\
	1 & 0 & 0
\end{psmatrix},d =4; \\
M \in \left\lbrace
\begin{psmatrix}
	2 & 1 & 1 \\
	1 & 1 & 0 \\
	1 & 0 & 1
\end{psmatrix},
\begin{psmatrix}
	2 & 1 & 1 \\
	1 & 0 & 1 \\
	1 & 1 & 0
\end{psmatrix},
\begin{psmatrix}
	1 & 1 & 1 \\
	1 & 0 & 2 \\
	1 & 2 & 0
\end{psmatrix},
\begin{psmatrix}
	1 & 1 & 1 \\
	1 & 1 & 1 \\
	1 & 1 & 1
\end{psmatrix} \right\rbrace,
P = \begin{psmatrix}
	1 & 0 & 0 \\
	0 & 0 & 1 \\
	0 & 1 & 0
\end{psmatrix}, d = 3; \\
M \in \left\lbrace
\begin{psmatrix}
	1 & 1 & 1 \\
	1 & 0 & 0 \\
	1 & 0 & 0
\end{psmatrix},
\begin{psmatrix}
	0 & 1 & 1 \\
	1 & 1 & 0 \\
	1 & 0 & 1
\end{psmatrix}^*,
\begin{psmatrix}
	0 & 1 & 1 \\
	1 & 0 & 1 \\
	1 & 1 & 0
\end{psmatrix}^* \right\rbrace,
P = \begin{psmatrix}
	1 & 0 & 0 \\
	0 & 0 & 1 \\
	0 & 1 & 0
\end{psmatrix}, d = 4.
\end{gather*}
Moreover, each type above, except possibly the last two starred ones, occurs for an algebra satisfying Hypothesis \ref{hyp.scy}.
\end{theorem}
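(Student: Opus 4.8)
The plan is to prove the two halves of the statement separately: first that every type satisfying Hypothesis~\ref{hyp.scy} with $|Q_0|=3$ and $P\neq I$ appears in the displayed list (necessity), then that each non-starred type is actually realized by some algebra (sufficiency). These require quite different arguments.

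For \emph{necessity} I would begin from the constraints in Theorem~\ref{thm.hilb}: $M$ and $P$ commute, $d\in\{3,4\}$ (Hypothesis~\ref{hyp.scy}), and $\det p(t)$ has all zeros roots of unity with a zero of order at least $3$ at $t=1$, where $p(t)=I-Mt+PM^Tt^{d-1}-Pt^d$; in addition $h_A(t)=p(t)^{-1}$ must have nonnegative integer coefficients, and the number of loops at each vertex is bounded by the result of Section~\ref{sec.key}. Up to relabeling the nonidentity permutation matrix $P$ is either the $3$-cycle $\begin{psmatrix}0&1&0\\0&0&1\\1&0&0\end{psmatrix}$ or the transposition $\begin{psmatrix}1&0&0\\0&0&1\\0&1&0\end{psmatrix}$. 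In the first case commuting with $P$ forces $M$ to be circulant, hence normal, so Theorem~\ref{thm.hilb}(2) applies and the common row sum of $M$ equals its spectral radius $6-d\in\{2,3\}$; this leaves only finitely many nonnegative integer circulants, and for each $p(t)$ is simultaneously diagonalized by the characters of $\mathbb Z_3$, so $\det p(t)$ factors completely into explicit polynomials and the remaining conditions (connectedness, roots of unity) become a short finite check. In the second case commuting with $P$ forces the block shape $M=\begin{psmatrix}a&b&b\\c&e&f\\c&f&e\end{psmatrix}$; decomposing $\kk^3$ into the $\pm1$-eigenspaces of $P$ splits $p(t)$ into a $1\times1$ block and a $2\times2$ block, and one determines by hand for which parameters the resulting polynomials (of degrees $d$ and $2d$) have all zeros roots of unity with combined vanishing order $\geq3$ at $t=1$, using Hilbert-series positivity and the loop bound to keep the casework finite.

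For \emph{sufficiency} I would go through the surviving types one at a time and exhibit an explicit twisted Calabi-Yau algebra of dimension $3$ of the prescribed type, using the constructions recalled in Section~\ref{sec.motive}. Most types are realized by a skew group algebra $B\#G$ with $B$ Artin-Schelter regular of dimension $3$ and $G$ one of $\mathbb Z_2,\mathbb Z_3,S_3$: then $Q$ is the McKay quiver of the $G$-module $B_1$, so $M$ is read off from the decompositions of $V_i\tensor B_1$ into irreducibles, and---this is the crucial point for obtaining $P\neq I$---the Nakayama automorphism permutes the vertices (the irreducible characters $\chi_i$ of $G$) by $\chi_i\mapsto\chi_i\cdot\hdet$, so one picks an action whose homological determinant character is nontrivial (equivalently, $G$ is not contained in the relevant $\SL$). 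For instance the circulant $d=3$ types arise from $\mathbb Z_3$ acting diagonally on $\kk[x,y,z]$ with non-$\SL$ weights; the fixed-vertex transposition types with $d=3$ arise from $S_3$ acting on a dimension-$3$ AS-regular algebra through its permutation or standard representation; and the $d=4$ types arise from analogous actions on AS-regular algebras with two degree-one generators and cubic relations. A few of the remaining types are instead realized as Ore extensions of lower-dimensional twisted Calabi-Yau algebras (Section~\ref{sec.motive}). In every case one must verify that the algebra constructed is twisted Calabi-Yau of dimension $3$ with $\GKdim=3$ (inherited from $B$), that its incidence matrix is the claimed $M$, and that its Nakayama permutation is the claimed $P$.

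The principal difficulties are, on the necessity side, the transposition case: since $M$ need not be normal there, Theorem~\ref{thm.hilb}(2) is unavailable and one must argue directly about when the $2\times2$ determinant block is a product of cyclotomic polynomials, which is the most delicate part of the enumeration even though it can ultimately be done by hand. On the sufficiency side the difficulty is simply finding a construction for every non-starred type; the two starred $d=4$ transposition types are exactly those for which no construction is produced, which is why the statement only claims realizability ``except possibly'' for them, and the remaining $d=4$ transposition type $\begin{psmatrix}1&1&1\\1&0&0\\1&0&0\end{psmatrix}$ is the trickiest positive case, as it requires producing an AS-regular algebra carrying an $S_3$-action through the standard representation.
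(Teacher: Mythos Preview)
Your overall plan is sound and, for the $3$-cycle permutation, matches the paper's argument exactly: circulant $M$, normality, row-sum equals spectral radius $6-d$, simultaneous diagonalization by the $\mathbb Z_3$ character matrix, then a short finite check.

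In the transposition case your block-decomposition approach is a legitimate alternative to the paper's direct computation of $\det p(t)$, but you have misdiagnosed the difficulty. You write that ``$M$ need not be normal there, [so] Theorem~\ref{thm.hilb}(2) is unavailable,'' and plan to analyze the $2\times 2$ determinant block by hand. In fact both your route and the paper's force $M$ to be \emph{symmetric} almost immediately: with $M=\begin{psmatrix}a&b&b\\c&e&f\\c&f&e\end{psmatrix}$, the $2\times 2$ block of $p(t)$ on the $+1$-eigenspace of $P$ evaluates at $t=1$ to a matrix of determinant $2(b-c)^2$, while the $1\times 1$ block on the $-1$-eigenspace can vanish at $t=1$ to order at most $2$. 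Hence the required triple vanishing forces $b=c$, so $M$ is symmetric and Theorem~\ref{thm.hilb}(2) applies after all. The paper exploits exactly this: once $M$ is symmetric with spectral radius $6-d$, the eigenvalue equation becomes an integer equation of the shape $x^2+2y^2=z^2$ with $0\le z\le 6-d+?$, and the enumeration is a one-paragraph Pythagorean-type analysis rather than a cyclotomic-factorization search.

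On sufficiency your outline is correct in spirit, but two remarks. First, your blanket ``$\mathbb Z_3$ acting diagonally on $\kk[x,y,z]$'' will not produce every circulant type with $P$ the $3$-cycle: for $M$ the all-ones matrix the obvious diagonal $\SL_3$-type action has trivial $\hdet$ and hence $P=I$, so the paper instead uses a type-$E$ Artin--Schelter regular algebra whose superpotential scales nontrivially. Second, the genuinely delicate realizations are not the $S_3$ case you flag but the two transposition $d=3$ types $\begin{psmatrix}1&1&1\\1&1&1\\1&1&1\end{psmatrix}$ and $\begin{psmatrix}1&1&1\\1&0&2\\1&2&0\end{psmatrix}$: neither arises as a full McKay quiver, and the paper obtains them as Ore extensions $B[t;\sigma]$ where $B$ is a dimension-$2$ twisted Calabi--Yau algebra occurring only as a \emph{connected component} of a McKay quiver for a larger group (Example~\ref{ex.component}).
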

\noindent Similarly as for the two-vertex theorem, most of the types arise (up to Morita equivalence)
as skew group algebras of the form $B \# \mb{Z}_3$ or $B \# S_3$ where $B$ is Artin-Schelter regular of dimension $3$,
though the appropriate $B$ is less obvious in some cases.  A few we obtain instead from Ore extensions of twisted Calabi-Yau algebras of dimension $2$.  All of the necessary conditions coming from Theorem~\ref{thm.hilb} hold for the last two starred types; we have been unable to find examples of these types, though we suspect they do occur.
Our computational tools in GAP and Maple suggest that the Hilbert series of a derivation-quotient algebra on one of these quivers with a random superpotential seems to match the predicative Hilbert series in Theorem \ref{thm.hilb}.

In the two vertex case or three vertex case with $P \neq I$, one can show that in an allowable type $(M, P, d)$ the matrix $M$ is normal, and so Theorem~\ref{thm.hilb} allows one to restrict the types based on the spectral radius of $M$.
This is no longer necessarily true in the three vertex case with $P = I$, and so this is most difficult case in the classification.
In fact there are types $(M, I, d)$ in this case where the polynomial $p(t)$ from Theorem~\ref{thm.hilb} has only roots of unity for zeros and $M$ is not normal.  
In some cases these quivers support twisted Calabi-Yau algebras, but in other cases they do not.

The first step in the analysis of the remaining case is an appeal to the Golod-Shaferevich inequality in order to bound the number of loops at each vertex:
\begin{proposition}[Proposition \ref{prop.loops}]
Let $A = \kk Q/(\partial_a \omega : a \in Q_1)$ be a derivation-quotient algebra for some finite quiver $Q$ and some $\mu$-twisted superpotential $\omega$ of degree $d$.  
If $\GKdim(A) < \infty$ and the vertex $v$ of $Q$ is fixed by $\mu$, then the number of loops at $v$ in $Q$ is less than or equal to $\ell(d)$, where $\ell(3) = 3$, $\ell(4) = \ell(5) = 2$, and $\ell(d) = 1$ for $d \geq 6$.
\end{proposition}


The limitation on the number of loops gives us a finite number of cases to consider corresponding to the diagonal entries in the
matrix $M$ of a possible type $(M, I, d)$.  This allows for a computer analysis of the types $(M, I, d)$ such that $\det p(t)$ has only roots of unity for zeros,
using that there are finitely many products of cyclotomic polynomials that $\det p(t)$ can be.  The resulting matrices $M$ fall into finitely many families,
some of which can be ruled out since they lead to a $p(t)^{-1}$ with nonpositive coefficients, or happen to have $M$ normal but with the wrong spectral radius.  A number of other possibilities are ruled out by showing that any derivation-quotient algebra on that quiver leads to an algebra of the wrong Hilbert series, by considering the beginning of a Gr\"{o}bner basis.
As an example of the type of quiver that looks good at first,
let $A \iso \kk Q/(\partial_a \omega : a \in Q_1, \deg(\omega)=3)$ with $Q$ as below.
\vspace{1.5em}
\[  \xymatrix{
& & \bullet \ar[dr] \ar@(ul,ur)[] & &  \\
& \bullet \ar[ur] \ar@/^/[rr] \ar@/^1pc/[rr] \ar@(ul,dl)[] &
& \bullet \ar@/^/[ll] \ar@/^1pc/[ll] \ar@(ur,dr)[] & &}\]
\vspace{1em}

\noindent The roots of $\det p(t)$ all lie on the unit circle, and
computing the the first few terms of $h_A(t) = p(t)^{-1}$ suggests polynomial growth.
However, we can show that $Q$ has no superpotential such that the corresponding derivation-quotient algebra $A$ is twisted Calabi-Yau
(see Lemma~\ref{lem.partition} and Corollary~\ref{cor.partition}).

For the types $(M, I, d)$ remaining on the list, most are again realized by skew group algebras of actions on Artin Schelter regular algebras, or by
other techniques we have already used earlier.  However, for the first time we also encounter an infinite family of examples with non-normal $M$,
which can be realized by using quiver mutations.

Our final main result then follows.

\begin{theorem}[Proposition~\ref{prop.3vert3}]
\label{thm.3vert}
Let $A$ satisfy Hypothesis \ref{hyp.scy} with $|Q_0| = 3$ and type $(M, I, d)$.
Then the type is one of the following.
The indeterminates $a,b,c$ are positive integers satisfying Markov's equation $a^2+b^2+c^2=abc$.
\begin{gather*}
M \in \left\lbrace \begin{psmatrix}2 & 1 & 1 \\ 1 & 0 & 1 \\ 1 & 1 & 0\end{psmatrix},
\begin{psmatrix}2 & 1 & 1 \\ 1 & 1 & 0 \\ 1 & 0 & 1\end{psmatrix},
\begin{psmatrix}1 & 1 & 1 \\ 1 & 1 & 1 \\ 1 & 1 & 1\end{psmatrix},
\begin{psmatrix}1 & 1 & 1 \\ 1 & 2 & 0 \\ 1 & 0 & 2\end{psmatrix},
\begin{psmatrix}0 & a & 0 \\ 0 & 0 & b \\ c & 0 & 0\end{psmatrix} \right\rbrace,    d = 3; \\
M \in \left\lbrace \begin{psmatrix}1 & 1 & 0 \\ 0 & 1 & 1 \\ 1 & 0 & 1\end{psmatrix},
\begin{psmatrix}1 & 1 & 1 \\ 1 & 0 & 0 \\ 1 & 0 & 0\end{psmatrix},
\begin{psmatrix}0 & 1 & 1 \\ 1 & 0 & 1 \\ 1 & 1 & 0\end{psmatrix},
\begin{psmatrix}1 & 0 & 1 \\ 0 & 1 & 1 \\ 1 & 1 & 0\end{psmatrix},
\begin{psmatrix}1 & 1 & 0 \\ 0 & 1 & 1 \\ 2 & 0 & 1\end{psmatrix}^* \right\rbrace,  d = 4.
\end{gather*}
Moreover, each type, except possibly the final starred type, supports an algebra satisfying Hypothesis \ref{hyp.scy}.
\end{theorem}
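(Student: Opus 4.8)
The statement has two halves --- that every admissible type is listed, and that every listed type except the starred one is realized --- and the plan is to treat completeness first. Fix $d \in \{3,4\}$, which is part of Hypothesis~\ref{hyp.scy}. Since $P = I$, the matrices $M$ and $P$ automatically commute, and Theorem~\ref{thm.hilb}(1) gives $h_A(t) = p(t)^{-1}$ with
\[ p(t) = I - Mt + M^{T}t^{d-1} - t^{d}I \in M_3(\kk)[t], \]
where every zero of $\det p(t)$ is a root of unity and $(t-1)^{3} \mid \det p(t)$. Since $\det p(t)$ has integer coefficients, $\det p(0) = \det I = 1$, and $\deg \det p(t) = 3d \le 12$, it follows that $\det p(t) = -\prod_i \Phi_{n_i}(t)$ is a signed product of cyclotomic polynomials of total degree $3d$ in which $\Phi_1$ occurs at least three times; since $\phi(n) \le 12$ confines $n$ to a finite set, there are only finitely many such products. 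Next I would use Proposition~\ref{prop.loops} to bound the diagonal entries of $M$ by $6 - d$, and expand the $3 \times 3$ determinant to write the coefficients of $\det p(t)$ as explicit polynomials in the entries of $M$ (the coefficient of $t$ is $-\operatorname{tr}(M)$, the coefficient of $t^{d}$ involves $\sum_{i \ne j} M_{ij}^{2}$, and so on). Matching these against each of the finitely many candidate cyclotomic products reduces the determination of the admissible $M$ to a finite collection of polynomial systems in the off-diagonal entries, which I would solve in Maple. The resulting matrices fall into finitely many families: the sporadic matrices listed in the statement, together with the one-parameter family $\begin{psmatrix} 0 & a & 0 \\ 0 & 0 & b \\ c & 0 & 0 \end{psmatrix}$, for which $\det p(t)$ factors into cyclotomic polynomials precisely when $(a,b,c)$ solves Markov's equation $a^{2}+b^{2}+c^{2}=abc$.

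\textbf{Eliminating false positives.} Not every family produced by the search is actually the type of a twisted Calabi-Yau algebra, and pruning the list is where the real work lies; I expect this to be the main obstacle. Three tools should suffice. First, $h_A(t) = p(t)^{-1}$ must be a matrix of power series with nonnegative integer entries, so any $M$ for which $p(t)^{-1}$ has a negative coefficient is discarded. Second, several of the survivors turn out to be normal matrices, and for these Theorem~\ref{thm.hilb}(2) forces $\rho(M) = 6 - d$; those with the wrong spectral radius are discarded. Third, and hardest, there remain quivers --- such as the three-vertex quiver displayed just before the statement --- for which $p(t)^{-1}$ has nonnegative coefficients consistent with polynomial growth but which nonetheless support no good twisted superpotential. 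For these one must argue directly about the relation ideal: for \emph{every} twisted superpotential $\omega$ of the relevant degree, an examination of the leading terms of the relations $\partial_a \omega$ and of the beginning of a Gr\"obner basis of $(\partial_a \omega : a \in Q_1)$ shows that $\dim_\kk A_n$ cannot match the coefficient prescribed by $p(t)^{-1}$ for some small $n$, so $A$ cannot be twisted Calabi-Yau with that type. This combinatorial input is isolated in Lemma~\ref{lem.partition} and Corollary~\ref{cor.partition}; applying it together with the first two tools to each survivor should leave exactly the two lists in the statement.

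\textbf{Realization.} For the converse I would exhibit, for every listed type except the starred one, an explicit twisted Calabi-Yau derivation-quotient algebra of that type, using the constructions of Section~\ref{sec.motive}. Most types should be realized, up to Morita equivalence, by a skew group algebra $B \# G$ with $G \cong \ZZ_3$ or $G \cong S_3$ acting on an Artin-Schelter regular algebra $B$ of dimension $3$, the quiver of $B \# G$ being the McKay quiver of the action; the work is to choose $B$ and the action so that the McKay quiver has the prescribed incidence matrix, after which $\GKdim(B \# G) = 3$ follows from $\GKdim(B) = 3$ together with Theorem~\ref{thm.hilb}. A few of the remaining types I would instead obtain as Ore extensions of twisted Calabi-Yau algebras of dimension $2$. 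Finally, the infinite family $\begin{psmatrix} 0 & a & 0 \\ 0 & 0 & b \\ c & 0 & 0 \end{psmatrix}$ with $a^{2}+b^{2}+c^{2}=abc$ I would produce by quiver mutation: beginning with the twisted Calabi-Yau algebra attached to the quiver with $a = b = c = 3$ (the Beilinson-type quiver of $\mathbb{P}^2$) and iterating mutations, which preserve the twisted Calabi-Yau-$3$ property, one obtains algebras whose incidence matrices run over exactly the solutions of Markov's equation of the displayed shape. The only type left unrealized is the starred $d = 4$ matrix, for which all the numerical conditions of Theorem~\ref{thm.hilb} hold and which we conjecture does occur.
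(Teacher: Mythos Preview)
Your overall strategy matches the paper's: bound the diagonal via Proposition~\ref{prop.loops}, enumerate cyclotomic factorizations of $\det p(t)$, solve for the off-diagonal entries by computer, then prune and realize. The setup and realization paragraphs are essentially right (including the use of skew group algebras, an Ore extension, and mutation for the Markov family).

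The gap is in the elimination step. You assert that nonnegativity of $p(t)^{-1}$, the spectral-radius test for normal $M$, and Lemma~\ref{lem.partition}/Corollary~\ref{cor.partition} together cut the long list down to the stated one. In the paper they do not. After those three filters, several matrices survive and require separate ad-hoc arguments: the quiver with incidence matrix $\begin{psmatrix}1 & 4 & 0 \\ 1 & 0 & 2 \\ 5 & 0 & 0\end{psmatrix}$ admits no arrow-partition of the kind Lemma~\ref{lem.partition} needs, and is eliminated instead by a Gr\"obner-basis calculation on the two-vertex restriction (Lemma~\ref{lem.sp1}); the matrix $\begin{psmatrix}1 & 2 & 1 \\ 1 & 0 & 2 \\ 2 & 0 & 0\end{psmatrix}$ is killed by a rank argument about $e_3(RV \cap VR)e_2$ in degree~$3$ that is related to but not an instance of Lemma~\ref{lem.partition}; and $\begin{psmatrix}2 & 3 & 0 \\ 1 & 0 & 2 \\ 4 & 0 & 0\end{psmatrix}$ fails not because the Hilbert series is wrong in low degree but because the predicted $\GKdim$ is $5$ rather than $3$ (Lemma~\ref{lem.gkdim}), a test you do not list. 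Lemma~\ref{lem.partition} is a specific structural criterion---the arrow set splits into two pieces that no $s$-cycle mixes---not a catch-all for Gr\"obner-basis obstructions, so it cannot absorb these cases.

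A smaller point: your final sentence has the conjecture backwards. The paper's computer evidence (random superpotentials consistently give the wrong Hilbert series in degree~$6$) and Lemma~\ref{lem.outlier} (no \emph{untwisted} superpotential on that quiver is good) both point toward the starred type \emph{not} occurring; it is left open only because the twisted case resists the generic-superpotential argument of Bocklandt.
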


One of the methods we employ is a GAP program which takes a random superpotential on the quiver and finds the beginning of a Gr{\"o}bner basis for the algebra, checking whether the resulting Hilbert series is as expected.  Applying this to the final starred type suggests that in degree $6$ the Hilbert series becomes wrong, so we are quite certain that this type does not occur.  In fact, we are able to show that the starred example cannot have an \emph{untwisted} superpotential for which the derivation-quotient algebra is twisted Calabi-Yau, which provides further strong evidence.

We view our classification as a test project to better understand how graded twisted Calabi-Yau algebras of finite GK dimension can arise.
We use only a few methods to obtain the types in this paper, primarily skew group algebras, Ore extensions, and quiver mutation, and we wonder if there is a short list of techniques like this which can produce all possible types.  Before we began this project we did not even think about the possibility of mutations, and in fact wrongly suspected that in any type $(M, P, d)$ of a twisted Calabi-Yau algebra the matrix $M$ would be normal.  Perhaps studying quivers with more vertices will produce examples which require new kinds of constructions.

\section*{Acknowledgments}
We thank Ellen Kirkman, Frank Moore and Manny Reyes for helpful conversations and support.
We also thank the anonymous referee for pointing out several points for clarification and simplification.

\section{Methods of constructing twisted Calabi-Yau algebras}
\label{sec.motive}

In this section, we discuss various methods that can be used to produce twisted Calabi-Yau algebras which occur as
factor rings of path algebras of quivers.

\subsection*{Hopf actions on Artin-Schelter regular algebras}
Recall that a graded $\kk$-algebra $R = \bigoplus_{n \geq 0} R_n$ which is connected (that is, $R_0 = \kk$) is
called Artin-Schelter (or AS) regular of dimension $d$ if it has graded global dimension $d$, and $R$ satisfies
the AS Gorenstein condition
\[
\Ext^i_R(\kk, R) \cong \begin{cases} \kk & i = d \\ 0 &  i \neq d. \end{cases}
\]
Here $\kk = R/R_{\geq 1}$ is the \emph{trivial module}.  Note that in this version of the definition of AS regular, we do
not include the assumption that $\GKdim(R) < \infty$ as in the original papers on the subject.
By \cite[Lemma 1.2]{RRZ}, an AS regular algebra $R$ is the same as a connected $\mb{N}$-graded twisted Calabi-Yau
algebra.   Thus if $R$ is AS regular of dimension $3$ and generated as an algebra by degree $1$ elements, it is a derivation quotient
algebra $\kk Q/(\partial_a \omega : a \in Q_1)$ for a twisted superpotential $\omega$ on a quiver $Q$ with one vertex, by Theorem~\ref{thm.bsw}.

Now let $H$ be a semisimple, finite-dimensional Hopf algebra with coproduct
$\Delta$ and counit $\epsilon$.  We use the Sweedler notation $\Delta(h) = h_1 \otimes h_2$.  Suppose that $H$
acts on the AS regular algebra $R$ of dimension $d$, so that $R$ is a left $H$-module algebra (that is, $R$ is a left $H$-module such that $h(rs) = h_1(r) h_2(s)$ and
$h(1) = \epsilon(h)$ for all $h \in H$, $r, s \in R$).   We assume that $H$ preserves the grading on $R$, in other words if $r \in R_n$
then $h(r) \in R_n$ for all $h \in H$.  The smash product $R \# H$ is defined to
be the tensor product $R \otimes_\kk H$ as a $\kk$-vector space, with multiplication $(r \otimes g) * (s \otimes h) = rg_1(s) \otimes g_2h$.
By \cite[Theorem 4.1]{RRZ}, the smash product $A = R\# H$ is again a graded twisted Calabi-Yau algebra of dimension $d$.  Moreover, its Nakayama automorphism $\mu_A$ is given by the formula $\mu_A = \mu_R \otimes (\Xi^l_{\hdet} \circ \mu_H)$.  Here, $\mu_R$ is the Nakayama automorphism of $R$,
and $\mu_H$ is the Nakayama automorphism of the algebra $H$.  The action of $H$ on $R$ determines an
algebra homomorphism $\hdet: H \to \kk$ called the \emph{homological determinant}, which is defined in \cite{JZ,KKZ}.  Then
$\Xi^l_{\hdet}: H \to H$ is the \emph{left winding automorphism} of $H$ with respect to $\hdet$, which is defined by
$\Xi^l_{\hdet}(h) = \hdet(h_1) h_2$.

Smash products of the type described above are presumably quite useful to construct more general twisted Calabi-Yau algebras, but finding
suitable Hopf actions and calculating $\hdet$ can be difficult.  In this paper we will only need the construction in the special case that $H=\kk G$ is a group algebra for a finite group $G$, where $\kk G$ is a Hopf algebra as usual with $\Delta(g) = g \otimes g$ and $\epsilon(g) = 1$ for all $g \in G$.
In this case $R \# \kk G$ is a \emph{skew group algebra} and the constructions above simplify considerably, as the left $\kk G$-module algebra structure is simply an action of $G$ on $R$ by graded automorphisms.
Recall our standing assumption that $\kk$ is algebraically closed of characteristic $0$.
Then $\kk G$ is semisimple and its Nakayama automorphism $\mu_G$ is the identity.
The winding automorphism $\Xi^l_{\hdet}(g) = \hdet(g) g$ simply scales group elements by their homological determinants.

Assume now that $R$ is generated as an algebra by $R_1$.  The algebra $A = R \# \kk G$ is also graded twisted Calabi-Yau of dimension $d$ and generated as an algebra by its elements of degree $0$ and $1$, and its
degree $0$ piece $A_0 = \kk G$ is semisimple.  As is well-known, we can choose a full idempotent $e \in A_0$ such that
$B = e A e$ is a graded \emph{elementary} algebra with $B_0 = \kk^m$ for some $m$, and where $B$ is still twisted Calabi-Yau
since it is Morita equivalent to $A$ \cite[Theorem 4.3, Lemma 6.2]{RR1}.  Now it is standard that $B \cong \kk Q/I$ for some uniquely determined quiver $Q$ and homogeneous ideal $I \subseteq \kk Q_{\geq 2}$ \cite[Lemma 3.4]{RR1}.  In fact the quiver $Q$ is the \emph{McKay quiver} of the action of $G$ on $R$ (see the discussion following \cite[Theorem 3.2]{BSW}).  Namely, let $V = R_1$, which is a finite-dimensional representation of $G$.  If $W_1, \dots, W_m$ are the distinct finite-dimensional irreducible representations of $G$, then $Q$ has $m$ vertices, labeled by the $W_i$,
and the number of arrows from vertex $i$ to vertex $j$ is the number of copies of
$W_i$ in the direct sum decomposition of $V \otimes W_j$ (note that our definition gives the opposite of the McKay quiver defined in \cite{BSW}, which is necessary since we compose paths in the quiver from left to right).

The action of the Nakayama automorphism $\mu_A$ on $A_0 \cong \kk G$ is given, by the earlier formula, by the winding automorphism $\Xi^l_{\hdet}(g) = \hdet(g) g$.  Assume now that $R$ is a derivation-quotient algebra, which is automatic if $\dim R = 3$ by Theorem~\ref{thm.bsw},
or more generally as long as $R$ is $m$-Koszul for some $m$.   Write $R = \kk \langle x_1, \dots, x_r \rangle/ (\partial_{x_i} \omega : 1 \leq i \leq r )$ for a twisted superpotential $\omega$ of some degree $s$.  Given any graded automorphism $\sigma$ of $R$, since $R$ is generated in degree $1$, we can lift $\sigma$ uniquely to an automorphism $\sigma$ of $\kk \langle x_1, \dots, x_r \rangle$, and then  $\sigma(\omega) = \hdet(\sigma) \omega$ by \cite[Theorem 3.3]{MS}.  This gives a way to explicitly calculate $\hdet$, by checking how automorphisms scale the superpotential.

One can then determine the permutation the Nakayama automorphism $\mu_B$ gives on the
vertices of $Q$.  To do this, decompose $1 \in \kk G$ as a sum of central idempotents $1 = e_1 + \dots + e_m$, one for each irreducible representation of $G$.  If the representation $W_i$ has character $\chi_i$, recall the explicit formula $e_i = \frac{\dim_k(W_i)}{|G|} \sum_{g \in G} \chi_i(g^{-1}) g$.  Assuming that one has calculated $\hdet$, it is easy to see how $\Xi^l_{\hdet}$ permutes these idempotents, and this
determines a permutation of the vertices of $Q$ via the correspondence $W_i \leftrightarrow e_i$.

In particular, in the special case that $\dim R = 3$, the results above show how to find the type $(M, P, s)$ of the twisted Calabi-Yau
algebra $B \cong \kk Q/I$ which is Morita equivalent to $R \# \kk G$.

\subsection{Normal incidence matrices of McKay quivers}

Note that the incidence matrix $M$ of a McKay quiver of a group action is always a normal matrix \cite[Proposition 2]{butin}. In the more general situation of a smash product $R \# H$ for a semisimple Hopf algebra $H$, the underlying quiver can also be described by the same McKay quiver construction \cite[Section 7]{CKWZ}.  We briefly discuss this more general situation, though it is not needed in our later classification. In some cases the incidence matrix of this kind of quiver must again be normal.

\begin{proposition}
Let $H$ be a finite-dimensional semisimple triangular Hopf algebra and $\gamma$ a representation of $H$.
The McKay quiver associated to $H$ and $\gamma$ has incidence matrix $M$ which is normal.
\end{proposition}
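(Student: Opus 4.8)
The plan is to reformulate the incidence matrix as a left-multiplication operator on the Grothendieck ring of $\mathrm{Rep}(H)$ and then deduce normality from the braiding. Since $H$ is triangular it is in particular quasi-triangular, so the category $\mathrm{Rep}(H)$ of finite-dimensional left $H$-modules is a braided finite tensor category; let $c_{X,Y}\colon X\otimes Y\xrightarrow{\sim} Y\otimes X$ denote the braiding isomorphisms (only their existence will be used). Let $W_1,\dots,W_m$ be the distinct irreducible $H$-modules, and let $V$ be the finite-dimensional $H$-module defining the McKay quiver in the sense of \cite[Section~7]{CKWZ} (the degree-one part of the module algebra on which $H$ acts), so that the incidence matrix has $M_{ij}=[V\otimes W_j:W_i]$, the multiplicity of $W_i$ in $V\otimes W_j$. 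Writing $K_0=K_0(\mathrm{Rep}(H))$ with $\mathbb{Z}$-basis $\{[W_1],\dots,[W_m]\}$, the relation $[V\otimes W_j]=\sum_i M_{ij}[W_i]$ says exactly that $M$ is the matrix, in this basis, of the operator $L_V\colon K_0\to K_0$, $[X]\mapsto[V\otimes X]$.

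Next I would identify the transpose $M^T$ as the analogous operator for the dual module. Using that $\mathrm{Rep}(H)$ is rigid (every finite-dimensional $H$-module has a dual) together with the duality adjunction $\mathrm{Hom}(V^{*}\otimes A,B)\cong\mathrm{Hom}(A,V\otimes B)$, for all $i,j$ one computes
\[
(M^T)_{ij}=M_{ji}=\dim_{\kk}\mathrm{Hom}(W_j,\,V\otimes W_i)=\dim_{\kk}\mathrm{Hom}(V^{*}\otimes W_j,\,W_i)=[V^{*}\otimes W_j:W_i],
\]
where the last equality is Schur's lemma, valid since $H$ is semisimple. Thus $M^T$ is the matrix of $L_{V^{*}}$. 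Since $L_V\circ L_{V^{*}}=L_{V\otimes V^{*}}$ and $L_{V^{*}}\circ L_V=L_{V^{*}\otimes V}$ as operators on $K_0$, passing to matrices in the fixed basis gives $MM^T=L_{V\otimes V^{*}}$ and $M^TM=L_{V^{*}\otimes V}$.

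The final step is the braiding: $c_{V,V^{*}}\colon V\otimes V^{*}\xrightarrow{\sim} V^{*}\otimes V$ is an isomorphism in $\mathrm{Rep}(H)$, so $[V\otimes V^{*}]=[V^{*}\otimes V]$ in $K_0$, hence $L_{V\otimes V^{*}}=L_{V^{*}\otimes V}$, i.e.\ $MM^T=M^TM$, so $M$ is normal. (The argument in fact only uses that $H$ is semisimple and quasi-triangular.) I do not expect a genuine obstacle: all the weight is carried either by the braiding axiom or, alternatively, by the theorem of Etingof and Gelaki that a finite-dimensional semisimple triangular Hopf algebra over our field is a Drinfeld twist of a group algebra $\kk G$; that twist induces a tensor equivalence $\mathrm{Rep}(H)\simeq\mathrm{Rep}(G)$ which is the identity on underlying vector spaces and on morphisms, hence preserves all multiplicities $[V\otimes W_j:W_i]$, so the McKay quiver of $H$ coincides with that of $G$, whose incidence matrix is normal by \cite[Proposition~2]{butin}. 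The only point requiring care is the bookkeeping in the middle paragraph --- the adjunction conventions and the distinction between left and right duals in a category that need not be pivotal --- and this is harmless here, since in a braided category the two duals are naturally isomorphic.
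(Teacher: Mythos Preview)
Your proof is correct and follows essentially the same strategy as the paper: identify $M^T$ with the operator of tensoring by the dual representation, then use the braiding to obtain $MM^T=M^TM$. The paper carries out the first step via an explicit character computation (showing $n_{ij}=m_{ji}$ from orthogonality relations) and applies the braiding in the form $\gamma_k\otimes\gamma\cong\gamma\otimes\gamma_k$ inside a triple tensor product, whereas you use the tensor--hom adjunction and apply the braiding directly to $V\otimes V^*\cong V^*\otimes V$; your Grothendieck-ring formulation is a little cleaner and makes the sufficiency of mere quasi-triangularity transparent, a point the paper only remarks on after the proof.
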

\begin{proof}
Because $H$ is triangular, $H$-MOD is a symmetric monoidal category \cite{M}.
Following \cite{butin}, let $\{\gamma_1,\cdots,\gamma_\ell\}$ be the
set of equivalence classes of irreducible $H$-representations.
Let $\chi_i$ be the character of $\gamma_i$ and $\chi$ the character of $\gamma$.
Define the adjacency matrix $M$ by
\[ \gamma_k \tensor \gamma = \bigoplus_{i=1}^\ell m_{ik} \gamma_i.\]
Similarly, define $N$ by
\[ \gamma_k \tensor \gamma^* = \bigoplus_{i=1}^\ell n_{ik} \gamma_i.\]
Since $H$ is finite-dimensional and semisimple, then $M=N^T$ by \cite[Corollary 2.3.4]{BBKNZ}.


Because $H$ is triangular, we have
\[ (\gamma_k \tensor \gamma) \tensor \gamma^*
= (\gamma \tensor \gamma_k) \tensor \gamma^*
= \gamma \tensor (\gamma_k \tensor \gamma^*).\]
Computing each we find,
\begin{align*}
(\gamma_k \tensor \gamma) \tensor \gamma^*
&= \left(\bigoplus_{i=1}^\ell m_{ik} \gamma_i\right) \tensor \gamma^*
= \bigoplus_{i=1}^\ell m_{ik} \left( \gamma_i \tensor \gamma^*\right) \\
&= \bigoplus_{i=1}^\ell m_{ik} \left( \bigoplus_{j=1}^\ell n_{ji} \gamma_j \right)
= \bigoplus_{i=1}^\ell \left( \sum_{j=1}^\ell m_{ik} n_{ji} \right) \gamma_j
\end{align*}
and
\begin{align*}
\gamma \tensor (\gamma_k \tensor \gamma^*)
&= \gamma \tensor \left( \bigoplus_{i=1}^\ell n_{ik} \gamma_i \right)
= \bigoplus_{i=1}^\ell n_{ik} \left( \gamma \tensor \gamma_i \right) \\
&= \bigoplus_{i=1}^\ell n_{ik} \left( \bigoplus_{j=1}^\ell m_{ji} \gamma_j \right)
= \bigoplus_{i=1}^\ell \left( \sum_{j=1}^\ell n_{ik} m_{ji} \right) \gamma_j.
\end{align*}
Hence, $MN=NM$. Since $M=N^T$, then $MM^T=M^TM$.
\end{proof}
Triangular Hopf algebras as above  were classified by Gelaki \cite{G}.  In particular, they are all obtained from a group algebra $\kk G$
of a finite group $G$ with a (potentially) twisted comultiplication.
More generally, it may be enough for the proof to go through if there is an isomorphism between
$V \tensor W$ and $W \tensor V$ in $H$-MOD.
That is, if $H$-MOD is a braided monoidal category, implying that $H$ is a quasi-triangular Hopf algebra.
However, the classification of such algebras is still open.

We are not sure if the incidence matrices of the quivers one gets for the smash products $R \# H$ for arbitrary
finite-dimensional Hopf algebras $H$ are always normal, but in any case the results above suggest that to find twisted Calabi-Yau algebras of
 types $(M, P, d)$ where $M$ is not normal, one is likely to need constructions other than smash products.
One of these is the process of mutation, as we describe later in this section.

\subsection*{An example}
Let $G$ be a finite group acting by graded automorphisms on the Artin-Schelter regular algebra $R$, and
let $A = R \# \kk G$ be the skew group algebra.  As we saw above, $A$ is Morita equivalent to a twisted Calabi-Yau algebra
$A' = \kk Q/I$, where $Q$ is McKay quiver of the action.  In some cases $Q$ is disconnected.  In this case, $A'$ decomposes
as a product of algebras $A' \cong \prod_{i=1}^r \kk Q_i/I_i$, where each $Q_i$ is a connected component of $Q$, and where
each algebra $\kk Q_i/I_i$ is still twisted Calabi-Yau \cite[Proposition 4.6]{RR2}.  Some important quivers arise in this way as connected components of McKay quivers rather than as full McKay quivers of group actions.

\begin{example}[{\cite[Example 8.4]{CHI}}]
\label{ex.component}
Let $G$ be the group with presentation
\[
G = \langle \sigma, \tau,  \lambda  | \sigma^6 = \lambda^6 = \tau^2 = 1, \lambda\sigma = \sigma \lambda, \lambda \tau = \tau \lambda, \lambda \sigma \tau = \tau \sigma^{-1} \rangle.
\]
Let $\zeta$ be a fixed primitive $6$th root of $1$ and let $G$ act by graded automorphisms on a graded algebra $R = \kk \langle x, y \rangle/I$,
where $I$ is homogeneous with $I \subseteq \kk \langle x, y \rangle_{\geq 2}$.  Assume that $V = \kk x + \kk y$ is the $2$-dimensional representation of $G$ given by
\[
\sigma \mapsto \begin{pmatrix} \zeta & 0 \\ 0 & \zeta^{-1} \end{pmatrix}, \ \ \tau \mapsto \begin{pmatrix} 0 & 1 \\ 1 & 0 \end{pmatrix}, \ \ \lambda \mapsto \begin{pmatrix} 1 & 0 \\ 0 & 1 \end{pmatrix}.
\]
For each $i \in \{0, 1, \dots, 5 \}$ we have a $2$-dimensional irreducible representation of $G$, $W_i$, given by
\[
\sigma \mapsto \begin{pmatrix} \zeta^i & 0 \\ 0 & \zeta^{-i-1} \end{pmatrix}, \ \ \tau \mapsto \begin{pmatrix} 0 & 1 \\ 1 & 0 \end{pmatrix}, \ \ \lambda \mapsto \begin{pmatrix} \zeta & 0 \\ 0 & \zeta \end{pmatrix}.
\]
One may check that $W_i$ and $W_{5-i}$ are isomorphic representations via the coordinate switch. On the other hand, the eigenvalues of $\sigma$ for each $i=0,1,2$ differ and so $W_0, W_1, W_2$ are distinct up to isomorphism.

Now since $\lambda$ is central in $G$ of order $6$, any finite-dimensional representation $X$ of $G$ breaks up as $X = X_0 \oplus \dots \oplus X_5$, where $X_i$ is a representation of $G$ on which $\lambda$ acts by scalar multiplication by $\zeta^i$.  Since $\lambda$ acts trivially in the representation $V$, it follows from this that the McKay quiver $Q$ of the action breaks up as a disjoint union of quivers $Q_i$, where the vertices in $Q_i$ are the irreducible representations where $\lambda$ acts by $\zeta^i$.

Now one may check that $W_0, W_1, W_2$ are the only irreducible representations of $G$ up to isomorphism where $\lambda$ acts by $\zeta$.
Moreover, it is straightforward to calculate that $V \otimes W_i \cong W_{i-1} \oplus W_{i+1}$, with indices modulo $6$. Given the isomorphisms among the $W_i$, this implies that the component $Q_1$ of $Q$ is the quiver $Q'$ given by
\begin{equation}
\label{eq.specialquiv}
  \xymatrix{
{}_2\bullet \ar@/^/[rr] \ar@(ul,dl)[] & &
{}_1\bullet \ar@/_/[rr] \ar@/^/[ll] & &
\bullet_3 \ar@(ur,dr)[] \ar@/_/[ll]}
\end{equation}
with incidence matrix $M = \begin{psmatrix}
	0 & 1 & 1 \\
	1 & 1 & 0 \\
	1 & 0 & 1
\end{psmatrix}$.

Suppose now that $R = \kk \langle x, y \rangle/I$ is Artin-Schelter regular of dimension $n$. In the special case that $R = \kk[x,y]$, then $R$ is AS regular of dimension $2$.
Assume that $G$ acts on $R$ such that the action of $G$ on $R_1$ is given by the representation $V$.  In this way, we get that $R \# \kk G$ is Morita equivalent to an algebra that is a product of algebras, one of which is of the form $\kk Q'/I'$. Thus we obtain a twisted Calabi-Yau algebra of dimension $n$ on the quiver $Q'$.  
On the other hand, it is easy to see that there is no skew group algebra itself $R \# \kk G$ which is Morita equivalent to $\kk Q'/J$; this would require $Q'$ to be a full McKay quiver of an action; since there are three vertices, the
group $G$ acting would have three irreducible representations and thus would require $G \cong \mb{Z}_3$ or $G \cong S_3$. But $Q'$ does not occur from an action of those groups on a two-dimensional representation.
This can be proved by a simple group-theoretic argument, as we thank the referee for pointing out. Since we will enumerate the McKay quivers associated to these groups later in any case, we simply point the reader to the Appendix to see that Q' does not occur.

We have only discussed derivation-quotient algebras above in the context of dimension $3$. More generally, one may obtain a an $N$-Koszul Calabi-Yau algebra of dimension $n$ by taking all mixed partials of order $\frac{N(n-3)}{2}+1$ of a twisted superpotential $\omega$ of degree $\frac{N(n-1)}{2}+1$ \cite[Theorem 6.8]{BSW}. In particular, when $n = 2$, the potential itself is the relation; so $yx-xy$ is the superpotential that gives $R$.  Now the method of calculating $\hdet$ described earlier in this section applies to this superpotential.  The action of $G$ described above leaves $yx-xy$ invariant, so $\hdet$ is trivial.  Hence the action of the Nakayama automorphism of $R \# \kk G$ on the vertices of the quiver $Q$ is trivial.  So the same is true of the Nakayama automorphism of $B \cong \kk Q'/I'$ which is Morita equivalent to a component of the product.

As we will use Proposition~\ref{prop.3vert2} below, there is an automorphism $\rho$ of $\kk G$ given by 
$\sigma \mapsto - \sigma$, $\tau \mapsto \tau$, $\lambda \mapsto \lambda$.
Then $R \# \kk G$ has an automorphism $1 \# \rho$ which acts on the McKay quiver and fixes the components $Q_i$; on $Q_1$ this automorphism is easily seen to interchange the irreducible representations $W_0, W_2$ and to fix $W_1$. Thus we get an induced order $2$ automorphism of $B$ which acts on the vertices of $Q'$ by switching the two vertices with loops.
\end{example}

\subsection*{Ore extensions of twisted Calabi-Yau algebras}
Assume that $R$ is a twisted Calabi-Yau algebra of dimension $d$ with Nakayama automorphism $\mu_R$.
Let $\sigma \in \Aut_\kk(R)$ be an algebra automorphism of $R$ and $\delta$ a $\sigma$-derivation of $R$. That is, $\delta(rs) = \sigma(r)\delta(s) + \delta(r)$ for all $r,s \in R$. 
The \emph{Ore extension} $A=R[t;\sigma,\delta]$ of $R$ is generated as an algebra over $R$ by $t$ with relations $tr=\sigma(r)t+\delta(r)$ for all $r \in R$.
By \cite[Theorem 3.3]{LWW}, $A$ is again twisted Calabi-Yau (of dimension $d+1$) with Nakayama automorphism $\mu_A$ given by $\mu_A(x)=\sigma \inv \circ  \mu_R(x)$ for $x \in R$ and
$\mu_A(t) = ut+b$, $u,b \in R$, with $u$ a unit.

Now suppose that $R \cong \kk Q/I$ as graded algebras, for some connected quiver $Q$ and homogeneous ideal $I \subseteq \kk Q_{\geq 2}$.  Thus $R_0 = \kk^m$ where $m$ is number of vertices in $Q$, and we write $R_0 = \kk e_1 + \dots + \kk e_m$ where the $e_i$ are the trivial paths.   
Then the Nakayama automorphism $\mu_R$ is a graded automorphism \cite[Proposition 5.2(5)]{RR1} and so permutes the idempotents $e_i$; as usual we write $\mu$ for the corresponding permutation of $\{1, \dots, n \}$ so that $\mu(e_i) = e_{\mu(i)}$.  
Let $M$ be the incidence matrix of $Q$ and $P$ the permutation matrix of $\mu_R$ on the idempotents, where $P_{ij} = \delta_{\mu(i), j}$.  
Assume also $\sigma$ is a graded automorphism of $R$ and that 
$\delta$ is a $\sigma$-derivation of $R$ which is homogeneous of degree $1$, so that $\delta(r) \in R_{n+1}$ for $r \in R_n$.
Then $A$ is again $\NN$-graded with $\deg(t)=1$.

The graded automorphism $\sigma$ also permutes the idempotents, say corresponding to a matrix $P'$, with the same conventions, so
we write $\sigma(e_i) = e_{\sigma(i)}$ and $P'_{ij} = \delta_{\sigma(i), j}$.
Then $t e_i = \sigma(e_i) t = e_{\sigma(i)} t$, so that $t = \sum t_i$, where $t_i  = t e_i \in e_{\sigma(i)}  A_1 e_i$.  Since $A_0 = R_0 = \kk^m$, while a $\kk$-basis of $A_1$ is given by a basis of $R_1$ together with $t_1, \dots, t_m$, we can write $A$ as a factor of a path algebra $A \cong \kk Q'/J$, where the $t_i$ give new arrows from $\sigma(i) \to i$ for each $i$ \cite[Lemma 3.4]{RR1}.
So $Q'$ has incidence matrix $M + (P')^{-1}$.  The ideal $J$ is generated by the relations generating $I$ together with degree 2 relations of the form
$t_i r = \sigma(r) t_j + \delta(r)$ for all $i,j$ and arrows $r \in e_i R_1 e_j$.  The Nakayama automorphism of $A$ is given by the formula above.  In particular, $\mu_A(e_i) = \sigma^{-1}(\mu_R(e_i)) = e_{\sigma^{-1}(\mu(i))}$ and so the permutation matrix corresponding to the action of $\mu_A$ on the idempotents is  $P (P')^{-1}$.

Now suppose that $R \cong \kk Q/I$ is graded and twisted Calabi-Yau of dimension $2$, so $A$ is graded twisted Calabi-Yau of dimension $3$.  Then $I$ is generated by degree $2$ relations \cite[Proposition 7.1]{RR1}.  Thus $J$ is also generated by degree $2$ relations.  It follows
that $A$ is a derivation-quotient algebra corresponding to a degree $3$ superpotential, by Theorem~\ref{thm.bsw}.
By the calculations in the previous paragraph, $A$ has type $(M+(P')^{-1}, P (P')^{-1}, 3)$.

\subsection*{Quiver mutations}
One method for producing quivers supporting twisted Calabi-Yau algebras on quivers
whose adjacency matrices are not normal is via quiver mutation,
introduced originally in the context of cluster algebras by Fomin and Zelevinsky \cite{FZ1, FZ2}.
We have adapted these definitions to our convention of composing paths from left to right.

Let $Q$ be a quiver without loops or oriented 2-cycles and fix a vertex $v$.
For an arrow $a \in Q_1$ denote by $s(a)$ the source
and by $t(a)$ the target of $a$.  Consider the following process:
\begin{itemize}
\item[Step 1] For every pair of arrows $a,b \in Q_1$ with $t(a)=v$ and $s(b)=v$,
create a new arrow $[ab]:s(a) \rightarrow t(b)$.
\item[Step 2] Reverse each arrow $a$ with source or target at $v$ and rename it $a^*$.
\item[Step 3] Remove any maximal disjoint collection of oriented 2-cycles.
\end{itemize}
The resulting quiver $\widetilde{Q}$ is called a \textit{mutation} of $Q$.

Derksen, Weyman, and Zelevinsky \cite[Section 5]{dwz} gave a method of mutating a superpotential $\omega$ on a quiver $Q$
to produce a superpotential $\widetilde{\omega}$ on $\widetilde{Q}$,
defined as follows.   For each pair $a, b$ as in Step 1, replace every occurrence of the product $ab$ in $\omega$
by the single arrow $[ab]$, and add the term $[ab]b^* a^*$ to the potential.  Then when performing the reduction of Step 3,
the new potential also reduces in a standard way (we omit the details).  In the examples in the next proposition, where $\omega$ always has degree $3$, this last step will simply remove the 2-cycles from the new potential, leaving another homogeneous potential of degree $3$.  It is worth noting, however, that in general the mutation process does not lead to a homogeneous potential, and so the corresponding derivation-quotient algebra
may no longer be graded.

\begin{proposition}
\label{prop.mutation}
Let $Q$ be a quiver of the form
\[ \xymatrix{
	&	\bullet^1 \ar@{->}^a[dr] 	&  \\
{}_3\bullet \ar@{->}^c[ur] &  & \bullet^2 \ar@{->}^b[ll] }\]
where $a,b,c$ represent the number of arrows between each pair of vertices.
Then there is a superpotential $\omega$ of degree $3$ on $Q$ such that
$A \iso \kk Q/(\partial_a \omega : a \in Q_1)$ is Calabi-Yau with $\GKdim(A) = 3$
(and so $A$ satisfies Hypothesis~\ref{hyp.scy})
if and only if $a,b,c$ satisfy Markov's equation $a^2+b^2+c^2=abc$.
\end{proposition}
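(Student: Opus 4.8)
The plan is to treat the two implications separately. For necessity, suppose a good superpotential $\omega$ of degree $3$ exists on $Q = Q_{a,b,c}$, so $A = \kk Q/(\partial_a \omega : a \in Q_1)$ is Calabi--Yau of dimension $3$ with $\GKdim A = 3$. Then $A$ has type $(M, I, 3)$ with $M = \begin{psmatrix}0 & a & 0\\ 0 & 0 & b\\ c & 0 & 0\end{psmatrix}$, the Nakayama permutation being trivial since $A$ is Calabi--Yau. I would compute the matrix polynomial $p(t) = I - Mt + M^T t^2 - t^3$ of Theorem~\ref{thm.hilb}; a direct cofactor expansion gives
\[
\det p(t) = (1 - t^3)\bigl( (1 - t^3)^2 + (a^2 + b^2 + c^2 - abc)\,t^3 \bigr).
\]
Set $s = a^2 + b^2 + c^2 - abc$. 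The factor $1 - t^3$ has a simple zero at $t = 1$, while $(1-t^3)^2 + s t^3$ takes the value $s$ at $t = 1$, so $\det p(t)$ vanishes at $t = 1$ to order exactly $1$ unless $s = 0$. Since Theorem~\ref{thm.hilb}(1) requires vanishing to order at least $3$, we conclude $s = 0$, i.e.\ $a^2 + b^2 + c^2 = abc$. (When $s = 0$ one has $\det p(t) = (1 - t^3)^3$, so the root-of-unity requirement of Theorem~\ref{thm.hilb}(1) also holds --- a consistency check, not needed for the argument.)

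For sufficiency, assume $a^2 + b^2 + c^2 = abc$; I would argue by induction on $\max\{a,b,c\}$ along the Markov tree. For the base triple $(3,3,3)$, take $R = \kk[x,y,z]$, which is AS regular of dimension $3$ and, by Theorem~\ref{thm.bsw}, a derivation-quotient algebra for a degree-$3$ superpotential, and let $\mb{Z}_3 = \langle g \rangle$ act by scaling each variable by a fixed primitive cube root of unity $\zeta$. Then $g$ scales the degree-$3$ superpotential by $\zeta^3 = 1$, so $\hdet$ is trivial, and by the discussion of Section~\ref{sec.motive} the smash product $R \# \kk\mb{Z}_3$ is Calabi--Yau of dimension $3$ with $\GKdim = 3$, Morita equivalent to $\kk Q'/I'$ for the McKay quiver $Q'$. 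Decomposing $V = R_1 \iso W_1^{\oplus 3}$ and using $V \tensor W_j \iso W_{j+1}^{\oplus 3}$ shows that $Q'$ is exactly the $3$-cycle with three arrows on each edge, namely $Q_{3,3,3}$, so $\kk Q'/I'$ realizes the base case and satisfies Hypothesis~\ref{hyp.scy}.

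For the inductive step, recall that every Markov triple other than $(3,3,3)$ is obtained from its unique neighbour $(a',b',c')$ of smaller maximum in the Markov tree by one Vieta involution, say $c' \mapsto a'b' - c'$ (or a cyclic variant), and that every entry of a Markov triple is strictly less than the product of the other two (the two roots of $X^2 - a'b'\,X + ((a')^2 + (b')^2) = 0$ are positive with sum $a'b'$). The key point is that this involution is realized, within our family, by a single quiver mutation: I would check that mutating $Q_{a',b',c'}$ at the unique vertex not incident to the edge whose multiplicity changes creates $a'b'$ new arrows forming oriented $2$-cycles with the $c'$ arrows on that edge, so Step~3 leaves $a'b' - c' > 0$ of them and the result is again a $3$-cycle of the same orientation, namely $Q_{a,b,c}$. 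Starting from a good degree-$3$ superpotential $\omega'$ on $Q_{a',b',c'}$, the Derksen--Weyman--Zelevinsky mutation of $\omega'$ yields a superpotential $\widetilde\omega$ on $Q_{a,b,c}$ that remains homogeneous of degree $3$ after the Step~3 reduction. The essential input is that this mutation preserves the $3$-Calabi--Yau property of the derivation-quotient algebra --- i.e.\ that $\widetilde\omega$ is again good --- so that $\widetilde A := \kk Q_{a,b,c}/(\partial_a \widetilde\omega)$ is Calabi--Yau of dimension $3$; for this I would invoke a mutation-invariance result for $3$-Calabi--Yau quivers with potential, after checking that our potentials are reduced (automatic here, as $Q$ has no oriented $2$-cycles and $\omega'$ is homogeneous of degree $3$) and that the grading is respected.

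Finally I would verify $\GKdim \widetilde A = 3$ from the matrix Hilbert series. Since $\widetilde A$ is Calabi--Yau of dimension $3$ with type $(\widetilde M, I, 3)$, where $\widetilde M$ is the $3$-cycle matrix of $(a,b,c)$, the superpotential bimodule resolution (Theorem~\ref{thm.bsw}, \cite{BSW}) gives $h_{\widetilde A}(t) = \widetilde p(t)^{-1}$ with $\widetilde p(t) = I - \widetilde M t + \widetilde M^T t^2 - t^3$, and the determinant identity above together with $a^2 + b^2 + c^2 = abc$ gives $\det \widetilde p(t) = (1 - t^3)^3$. Hence every entry of $h_{\widetilde A}(t)$ is a rational function with poles only at cube roots of unity, so $\dim \widetilde A_n = O(n^2)$ and $\GKdim \widetilde A \le 3$; conversely $\adj \widetilde p(1) = \adj(\widetilde M^T - \widetilde M)$ is nonzero, since the skew-symmetric $3 \times 3$ matrix $\widetilde M^T - \widetilde M$ has rank $2$, and the nonnegativity of the coefficients of $h_{\widetilde A}$ then forces some $\dim(e_i \widetilde A_n e_j)$ to grow like $n^2$, giving $\GKdim \widetilde A = 3$. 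This closes the induction. I expect the mutation-invariance of the graded $3$-Calabi--Yau property --- the fact that $\widetilde\omega$ remains good --- to be the one genuinely substantial ingredient; the determinant computation, the Markov-tree combinatorics, the quiver-mutation bookkeeping, and the GK-dimension estimate should all be routine once it is in hand.
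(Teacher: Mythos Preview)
Your proposal is correct and follows essentially the same route as the paper: the necessity direction is the same determinant computation (your factorization $\det p(t) = (1-t^3)\bigl((1-t^3)^2 + s\,t^3\bigr)$ is in fact a bit cleaner than the paper's derivative argument), and the sufficiency direction is the same induction along the Markov tree with base case $\kk[x,y,z]\#\kk\mb{Z}_3$ and inductive step via Derksen--Weyman--Zelevinsky mutation. The ``genuinely substantial ingredient'' you flag---that mutation preserves the $3$-Calabi--Yau property---is exactly what the paper imports from Iyama--Reiten \cite[Theorem~7.1]{IR}, where it is proved by realizing the mutated algebra as the endomorphism ring of a tilting module; your $\GKdim$ verification via $\det\widetilde p(t)=(1-t^3)^3$ and $\adj\widetilde p(1)=\adj(\widetilde M^T-\widetilde M)\neq 0$ also matches the paper's appeal to Theorem~\ref{thm.hilb2}(2).
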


\begin{proof}
Suppose first that there is a (non-twisted) superpotential $\omega$ of degree such that
$A \iso \kk Q/(\partial_a \omega : a \in Q_1)$ is Calabi-Yau with $\GKdim(A) = 3$.
Let $M = \begin{psmatrix}0 & a & 0 \\ 0 & 0 & b \\ c & 0 & 0\end{psmatrix}$ be the incidence matrix of $Q$,
and let $p(t) = I - Mt + M^T t^2 + It^3$ be the associated matrix polynomial.  By Theorem~\ref{thm.hilb}
(see also the expanded version in Theorem~\ref{thm.hilb2} below), the matrix Hilbert series of $A$
is $h_A(t) = p(t)$, where all zeroes of $\det p(t)$ are roots of unity, with $1$ occurring as a zero with multiplicity at least $3$.
An easy calculation gives
\[ \det(p(t)) = 1+(-abc+a^2+b^2+c^2-3)t^3+(abc-a^2-b^2-c^2+3)t^6-t^9.\]
Now one checks that since $\frac{d}{dt} \det(p(t))$ also vanishes at $t = 1$, this gives $a^2 + b^2 + c^2 - abc = 0$.

Conversely, Iyama and Reiten showed that for every triple $(a, b, c)$ satisfying Markov's equation,
there is a superpotential $\omega$ on $Q$ such that $A \iso \kk Q/(\partial_a \omega : a \in Q_1)$ is Calabi-Yau of
dimension $3$ (see the discussion following \cite[Proposition 7.2]{IR}).
It then easily follows from Theorem~\ref{thm.hilb2}(2) below that $\GKdim(A) = 3$, by noting that
$\det p(t) = 1 - 3t^3 + 3t^6 - t^9 = (1-t^3)^3$ vanishes precisely three times at $t = 1$,
while the entries of $\adj p(t)$ do not vanish at $t = 1$.

More explicitly, Iyama and Reiten show that if one begins with the quiver $Q$ with $(a, b, c) = (3, 3, 3)$, then there is a Calabi-Yau algebra
$A$ on $Q$ obtained as a skew group algebra $\kk[x,y,z] \# \kk \mb{Z}_3$, where $\mb{Z}_3 = \langle \sigma \rangle$ and
$\sigma$ scales all three variables by $\zeta$, and so $Q$ is the McKay quiver of this action.   The algebra $A$ must be a derivation-quotient
algebra $\kk Q/(\partial_a \omega : a \in Q_1)$ for some superpotential $\omega$ by Theorem~\ref{thm.bsw}, but in any
case it is easy to write down $\omega$.  Performing a mutation on $Q$ and $\omega$, the derivation quotient algebra $\kk Q/(\partial_a \omega : a \in Q_1)$ gets mutated to another one $\kk \widetilde{Q}/(\partial_a \widetilde{\omega} : a \in Q_1)$.  Now beginning with the example $A$ above and performing all possible mutations, one gets Calabi-Yau algebras on all quivers $Q$ with $(a, b, c)$ a solution to Markov's equation.  The fact that all possible triples is obtained comes from the well-known fact that all solutions to Markov's equation can be obtained by starting with $(3, 3, 3)$ and performing all iterations of the operation $(a, b, c) \mapsto (a, b, ab-c)$, together with the analogous operations in the $a$ or $b$ coordinate.  One can easily check that mutating the quiver $Q$ corresponding to $(a, b, c)$ at the vertex $2$ leads to the quiver $Q$ corresponding to $(a, b, ab-c)$.  In fact, Iyama and Reiten prove that the mutated algebra is still Calabi-Yau by describing it in an alternate way as the endomorphism ring of a tilting module \cite[Theorem 7.1]{IR}.

For further illustration, consider the quiver $Q$ corresponding to $(a, b, c)$. Label the $a$ arrows from $1$ to $2$ by $x_1, \dots, x_a$, and similarly label the arrows from $2$ to $3$ by $y_i$ and the arrows from $3$ to $1$ by $z_i$.  Without loss of generality, assume we mutate at vertex $2$.  We note that in any solution to Markov's equation, $ab > c$.  Thus in mutating at vertex $2$, for example, the $ab$ new arrows created
of the form $[x_iy_j]$ from $1$ to $3$ form $c$ $2$-cycles with the $c$ arrows from vertex $3$ to vertex $1$.  Thus Step 3 of the mutation process removes $c$ $2$-cycles and all of the original $c$ arrrows from $3$ to $1$, leaving $ab-c$ new arrows from $1$ to $3$.  The mutated potential only contains terms of the form $[x_i y_j] y_j^* x_i^*$.
\end{proof}

\section{Key results}
\label{sec.key}

In this section, we describe some results that we will use to limit the type of a twisted Calabi-Yau
algebra of finite GK dimension.
First, we give a more detailed statement of Theorem~\ref{thm.hilb}, which follows from results in \cite{RR1}.
\begin{theorem}[{\cite[Propositions 2.8, 8.2, 8.10, Corollary 8.11]{RR1}}]
\label{thm.hilb2}
Let $A = \kk Q/(\partial_a \omega : a \in Q_1)$ be a derivation-quotient algebra on the connected quiver $Q$ with twisted superpotential $\omega$ of degree $d$.  Suppose that $A$ is twisted Calabi-Yau of type  $(M, P, d)$.
\begin{enumerate}
\item The matrices $M$ and $P$ commute, and we have $h_A(t) = (p(t))^{-1}$, where
\begin{align}
\label{def.mpoly}
	p(t) = I- M t+ PM^T t^{d-1}- P t^d \in M_m(\kk)[t].
\end{align}
In particular, writing $(p(t))^{-1} = \sum_{n \geq 0} H_n t^n$ then each $H_n$ has nonnegative entries.
\item Every zero of $\det p(t) \in \kk[t]$ is a root of unity if and only if $\GKdim(A) < \infty$.
In this case, writing $(p(t))^{-1} = \adj(p(t)) \det(p(t))^{-1}$,
if $m_d$ is the multiplicity of vanishing of $\det(p(t))$ at $t = 1$,
and $m_a$ is the maximal multiplicity of vanishing of the matrix entries of $\adj p(t)$ at $t = 1$,
then $\GKdim(A) = m_d - m_a$.  In particular, if $3 \leq \GKdim(A) < \infty$ then $m_d \geq 3$.
\item Suppose that $M$ is normal.  If $3 \leq \GKdim(A) < \infty$, then $\GKdim(A) = 3$, either $d = 3$ or $d = 4$,
and the spectral radius $\rho(M)$ of $M$ satisfies $\rho(M) = 6-d$.
\item If $M$ is symmetric, $P = I$, and $d = 3$, then all eigenvalues of $M$ must lie in the interval $[-1, 3]$.
\end{enumerate}
\end{theorem}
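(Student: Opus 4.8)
The plan is to diagonalize $M$, split $\det p(t)$ into a product of cubics indexed by the eigenvalues of $M$, factor each cubic, and read off the constraint from the requirement that every zero of $\det p(t)$ be a root of unity. Since $P = I$ and $M = M^{T}$, the matrix polynomial of \eqref{def.mpoly} specializes to
\[
p(t) = I - Mt + Mt^{2} - t^{3}I = (1-t^{3})I - (t-t^{2})M .
\]
As $M$ is real symmetric it is orthogonally diagonalizable with real eigenvalues $\lambda_{1}, \dots, \lambda_{m}$, counted with multiplicity, so conjugating $p(t)$ by an orthogonal matrix diagonalizing $M$ gives
\[
\det p(t) = \prod_{i=1}^{m}\bigl( (1-t^{3}) - (t-t^{2})\lambda_{i} \bigr) = \prod_{i=1}^{m}\bigl( 1 - \lambda_{i} t + \lambda_{i} t^{2} - t^{3} \bigr).
\]

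Next I would factor each cubic. A direct check shows $1 - \lambda t + \lambda t^{2} - t^{3} = (1-t)\bigl(t^{2} + (1-\lambda)t + 1\bigr)$, so
\[
\det p(t) = (1-t)^{m}\prod_{i=1}^{m} q_{i}(t), \qquad q_{i}(t) := t^{2} + (1-\lambda_{i})t + 1 .
\]
Since $\GKdim(A) < \infty$ (the case of interest, e.g.\ under Hypothesis~\ref{hyp.scy}), Theorem~\ref{thm.hilb2}(2) guarantees that every zero of $\det p(t)$ is a root of unity; in particular both roots of each $q_{i}$ have modulus $1$.

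The final step is the elementary observation that a monic real quadratic with constant term $1$ and both roots on the unit circle has its middle coefficient in $[-2,2]$. Indeed, if $|1-\lambda_{i}| > 2$ for some $i$, then the discriminant $(1-\lambda_{i})^{2} - 4$ is positive, so $q_{i}$ has two \emph{distinct real} roots whose product equals $1$; one of them is then a real number of modulus different from $1$, hence a zero of $\det p(t)$ that is not a root of unity --- a contradiction. Therefore $|1-\lambda_{i}| \le 2$, i.e.\ $\lambda_{i} \in [-1,3]$, for every $i$, as claimed. I do not anticipate a genuine obstacle; the only points that need care are invoking the finiteness of $\GKdim(A)$ to know the zeros of $\det p(t)$ are roots of unity, and recording that the convenient factorization of the cubic --- and hence the sharp endpoints $-1$ and $3$ --- is special to the case $d = 3$.
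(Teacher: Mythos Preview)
Your argument for part~(4) is correct. Note, however, that the paper does not itself prove Theorem~\ref{thm.hilb2}; it is quoted from \cite{RR1} (part~(4) corresponds to Corollary~8.11 there), so there is no in-paper proof to compare against. That said, your diagonalize-then-factor approach is the natural one and is almost certainly what the cited source does: with $P=I$, $M=M^{T}$, and $d=3$ one has $p(t)=(1-t^{3})I-(t-t^{2})M$, so $\det p(t)=\prod_i(1-t)\bigl(t^{2}+(1-\lambda_i)t+1\bigr)$, and the quadratic factor has both roots on the unit circle precisely when $|1-\lambda_i|\le 2$, i.e.\ $\lambda_i\in[-1,3]$.

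Your caveat about needing $\GKdim(A)<\infty$ to invoke part~(2) is well taken. As written, part~(4) does not list this hypothesis explicitly, but in every use in the paper (e.g.\ Lemma~\ref{lem.2vert2}) it is applied under Hypothesis~\ref{hyp.scy}, which includes $\GKdim(A)=3$; so your reading is the intended one.
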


Given a twisted Calabi-Yau derivation-quotient algebra of type $(M, P, d)$, the more complicated $P$ is, the more
it restricts $M$ since $M$ and $P$ commute.   This will make the case $P = I$ the hardest to deal with below.
Recall that an $m \times m$ matrix $M$ is \emph{circulant} if $M_{ij} = M_{i+1, j+1}$ for all $i,j$, where indices are
taken modulo $m$.  The following proposition follows from direct calculation and is left to the reader.
\begin{proposition}
\label{prop.perms}
Let $M, P \in M_m(\mb{Z})$ be commuting matrices.  Assume that $P$ is a permutation matrix corresponding to
the permutation $\sigma \in S_m$.
\begin{enumerate}
\item If $\sigma=(12\cdots m)$, then $M$ is circulant.
\item Suppose that $\sigma=(23\cdots m)$, so
we can write $P$ in the block form $P = \begin{pmatrix}1 & 0 \\ 0 & R\end{pmatrix}$
where $R$ is an $(m-1) \times (m-1)$ permutation matrix.   Then writing $M$
in the corresponding block form $M = \begin{pmatrix}a & \mathbf{b} \\ \mathbf{c}^T & N\end{pmatrix}$
for some row vectors $\mathbf{b}, \mathbf{c} \in k^{m-1}$ and $(m-1) \times (m-1)$-matrix $N$,
there are constants $b, c$ such that $\mathbf{b}=(b,\cdots,b)$, $\mathbf{c}=(c,\hdots,c)$, and $N$ is circulant.
\end{enumerate}
\end{proposition}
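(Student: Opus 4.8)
The plan is to convert the hypothesis $MP = PM$ into a symmetry condition on the entries of $M$ and then read off both conclusions directly. With the convention $P_{ij} = \delta_{\sigma(i),j}$, a one-line computation gives $(PM)_{ij} = M_{\sigma(i),j}$ and $(MP)_{ij} = M_{i,\sigma^{-1}(j)}$, so that $MP = PM$ is equivalent to $M_{i,\sigma^{-1}(j)} = M_{\sigma(i),j}$ for all $i,j$; replacing $j$ by $\sigma(j)$, this is the single condition
\[
M_{ij} = M_{\sigma(i)\,\sigma(j)} \qquad \text{for all } i,j,
\]
i.e. $M$ is invariant under the simultaneous action of $\sigma$ on row and column indices.

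For part (1), I would relabel the vertices so that $\sigma$ is the standard $m$-cycle $i \mapsto i+1$ (indices modulo $m$); this relabeling simultaneously conjugates $M$ and $P$, so it preserves all the hypotheses. The displayed identity then reads $M_{ij} = M_{i+1,\,j+1}$, which is exactly the definition of a circulant matrix.

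For part (2), relabel the non-fixed points so that $\sigma = (2\,3\,\cdots\,m)$ while $\sigma(1) = 1$. Putting $i = 1$ and letting $j$ range over $\{2,\dots,m\}$ in the displayed identity gives $M_{1j} = M_{1,\sigma(j)}$; since $\sigma$ acts transitively on $\{2,\dots,m\}$, all of these entries agree, so $\mathbf{b} = (b,\dots,b)$, and the symmetric argument with $j = 1$ gives $\mathbf{c} = (c,\dots,c)$. Restricting the displayed identity to $i,j \in \{2,\dots,m\}$ and using that $\sigma$ acts there as the standard $(m-1)$-cycle shows $N_{ij} = N_{i+1,\,j+1}$ with indices modulo $m-1$, so $N$ is circulant.

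There is no genuine obstacle here; the only thing requiring a little care is the bookkeeping with the permutation-matrix convention (keeping straight where $\sigma$ versus $\sigma^{-1}$ appears and whether it acts on rows or columns), together with the observation that passing to a standard cycle is harmless because it amounts to a simultaneous conjugation of $M$ and $P$. This is why, as the paper notes, the statement can safely be left to the reader.
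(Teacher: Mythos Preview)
Your argument is correct and is precisely the direct calculation the paper has in mind when it leaves the result to the reader: reducing $MP=PM$ to the entrywise invariance $M_{ij}=M_{\sigma(i)\,\sigma(j)}$ and reading off the circulant structure. One small remark: in the paper's applications the permutation $P$ is always chosen so that $\sigma$ is already the standard cycle (see the specific $P$ in Lemma~\ref{lem.2vert1} and Propositions~\ref{prop.3vert1}, \ref{prop.3vert2}), so your relabeling step, while harmless, is not actually needed there.
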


In the remainder of this section we show how to use the standard ideas of
Golod-Shafarevich to give some rough restrictions on what types $(M, P, d)$ can
be associated to a twisted Calabi-Yau algebra $A$ of dimension $3$ with finite GK dimension.
In this case the Hilbert series of $A$ will have the form given in Theorem~\ref{thm.hilb2}.  For some
choices of type, just because of the degrees of the generators and relations,
an algebra with that Hilbert series would be forced to have exponential growth.

We will use this method primarily to limit the number of loops at a vertex in a quiver supporting
an twisted Calabi-Yau algebra of finite GK dimension.  We also note that the method
can be used  to rule out larger subquivers, which may be useful in the future.

We first recall the idea of the Golod-Shafarevich bound for factor rings of a free associative algebra, as in \cite{GS}.
Given two power series $f(t) = \sum a_n t^n$ and $g(t) = \sum b_n t^n$ with $a_n, b_n \in \mb{Q}$,
we write $f(t) \leq g(t)$ if $a_n \leq b_n$ for all $n \geq 0$.   If $A = \bigoplus_{n \geq 0} A_n$ is a graded algebra with $m$ (degree 1) generators and $r_i$ relations of degree $i$, then
the Hilbert series $h_A(t) = \sum_{n \geq 0} (\dim_{\kk} A_n) t^n$ satisfies
\begin{align}
\label{gs.ineq}
h_A(t) \geq \big|(1-mt+\sum_{i\geq 2} r_i t^i )^{-1}\big|.
\end{align}
Here, the absolute value of a power series is obtained by replacing the first
negative coefficient and all subsequent coefficients by zero.

In the next result, we give an analog of the Golod-Shafarevich bound  \eqref{gs.ineq}  for matrix-valued Hilbert series.  If $p(t) = \sum_{n \geq 0} H_nt^n$ is a matrix valued power series with $H_n \in M_m(\mb{Q})$, then $|p(t)|$ is obtained by replacing the first matrix $H_n$ such that $H_n$ does not have all nonnegative coefficients, and all $H_r$ with $r \geq n$, by the zero matrix.
Similarly, if $q(t) = \sum_{n \geq 0} G_n t^n$ is another matrix power series then we write $p(t) \leq q(t)$ if $G_n-H_n$ has nonnegative coefficients for all $n$.

\begin{proposition}
\label{prop.gs}
Let $Q$ be a finite quiver with $m$ vertices and let $A = \kk Q/I$,
where $I$  is generated by a set $X$ where each $x \in X$ satisfies $x \in e_i \kk Q_n e_j$ for some $i, j$ and $n \geq 2$.
Let $M$ be the incidence matrix of $Q$, and let $R = R(t)$ be the matrix polynomial $R = \sum_{n \geq 0} H_n t^n$ with $(H_n)_{i,j}$ equal to the number of elements in $X \cap e_i (\kk Q)_n e_j$. Then
\[ h_{A}(t) \geq \left| \left( I - Mt + R \right)^{-1}\right|.\]
\end{proposition}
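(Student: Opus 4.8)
The plan is to adapt the classical Golod--Shafarevich argument to the setting of matrix-valued Hilbert series, working vertex by vertex. First I would set up the comparison module. Let $F = \kk Q$ be the path algebra, and consider the two-sided ideal $I = (X)$ generated by the given homogeneous (in the $e_i \kk Q_n e_j$ sense) relations. Write $J$ for the augmentation ideal $\kk Q_{\geq 1}$ of $F$, and let $\bar{J} = J/I$ be the corresponding ideal of $A$. The key structural fact is the exact sequence of graded $A$-bimodules (or, more conveniently, of graded left $A$-modules after applying $A \otimes_{A_0} -$ to the right-module side) relating $A$, the space of generators $V = \kk Q_1$, and the space of relations, which in the quiver setting reads roughly
\begin{align*}
A \otimes_{A_0} (\text{relation space}) \longrightarrow A \otimes_{A_0} V \otimes_{A_0} A \oplus \dots
\end{align*}
but the cleanest route is to mimic the one-vertex proof directly on Hilbert series. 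Namely, one shows that in $M_m(\mathbb{Q})\llbracket t \rrbracket$ there is a coefficientwise inequality
\begin{align*}
h_A(t)\bigl(I - Mt + R(t)\bigr) \geq I,
\end{align*}
coming from the fact that a presentation of $\bar J$ as a quotient of $A \otimes_{A_0} V$ (where $V$ has matrix-generating-function $Mt$) by the image of $A \otimes_{A_0} (\text{relations})$ (matrix-generating-function $R(t)$) gives $h_A(t) \cdot (Mt) \geq h_A(t) - I - (\text{error bounded by } h_A(t) R(t))$, rearranged appropriately.

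More precisely, the steps I would carry out are: (1) Observe $A_0 = \kk^m$ with idempotents $e_1, \dots, e_m$, so all the relevant objects are graded $(\kk^m, \kk^m)$-bimodules and their Hilbert series are genuinely matrix-valued; the tensor product $\otimes_{A_0}$ corresponds to matrix multiplication of Hilbert series. (2) From the surjection $A \otimes_{A_0} V \otimes_{A_0} A \twoheadrightarrow A_{\geq 1}$ (multiplication, using that $Q_1$ generates $A_{\geq 1}$ over $A_0$ on both sides) and its kernel, which is generated by the relations $X$, deduce an exact sequence $A \otimes_{A_0} W \otimes_{A_0} A \to A \otimes_{A_0} V \otimes_{A_0} A \to A \to A_0 \to 0$ where $W = \kk X$ is the space of relations; here $W$ has matrix-Hilbert-series $R(t)$ by definition and $V$ has $Mt$. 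Actually for the bound we only need right-exactness plus the observation that the left-hand map need not be injective, which is exactly where the inequality (rather than equality) comes from. (3) Taking matrix Hilbert series and using additivity/multiplicativity across this complex — with an inequality absorbing the unknown higher syzygies — gives $h_A(t) - I \geq h_A(t)(Mt) - h_A(t) R(t) h_A(t)$... and here one must be a little careful: the standard trick is to tensor down to left modules over $A$ (apply $- \otimes_{A_0} A_0$, i.e. kill the right $A$-action by $\otimes_{A_0}\kk^m$ on the right, or equivalently just use the minimal left resolution of $A_0$), yielding $h_A(t)\bigl(I - Mt + R(t)\bigr) \geq I$ coefficientwise in the sense that the difference has nonnegative matrix coefficients in every degree.

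Finally, (4) invert: since $I - Mt + R(t)$ has constant term $I$ (invertible) it is invertible over $M_m(\mathbb{Q})\llbracket t \rrbracket$, and the inequality $h_A(t)(I - Mt + R(t)) \geq I$ together with the nonnegativity of the coefficients of $h_A(t)$ propagates, by the usual inductive argument, to $h_A(t) \geq |(I - Mt + R(t))^{-1}|$ — the absolute value being forced because $(I - Mt + R(t))^{-1}$ may eventually develop negative coefficients, at which point the genuine nonnegative series $h_A(t)$ can only dominate the truncation. This last propagation step is a degree-by-degree induction: assuming agreement (as a lower bound) through degree $n-1$, the degree-$n$ coefficient of $h_A(t)$ is bounded below by the degree-$n$ coefficient forced by the relation, and one checks this matches the coefficient of $|(I-Mt+R(t))^{-1}|$ as long as the latter is still nonnegative.

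\medskip

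The main obstacle I anticipate is \emph{bookkeeping the non-commutativity and the two-sided structure correctly} — in the one-vertex case everything is a scalar power series and the Golod--Shafarevich complex is transparent, but here $V, W, A$ are all bimodules over $\kk^m$ and one must be scrupulous that ``$\otimes_{A_0}$ becomes matrix multiplication'' with the matrices in the right order, and that after reducing to one-sided modules the relation space genuinely contributes $R(t)$ and not $R(t)^T$ or something conjugated. A secondary subtlety is justifying the inequality in the complex: the left-most map in the presentation of $A_0$ by generators and relations is not injective (there are higher syzygies), so one gets $\geq$ rather than $=$; one should phrase this via the standard observation that for any complex of f.g. graded projective $A$-modules resolving $A_0$ in low degrees, the alternating sum of Hilbert series, truncated, gives a lower bound for $h_A(t)^{-1}$ in the appropriate sense. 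Once those two points are handled, the rest is the routine matrix analogue of the classical estimate, and the $|\,\cdot\,|$ truncation enters exactly as in \eqref{gs.ineq}.
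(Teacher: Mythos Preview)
Your approach is essentially the same as the paper's: obtain the coefficientwise inequality $h_A(t)(I - Mt + R) \geq I$ from the beginning of a projective resolution of $A_0 \cong \kk^m$ (this is your steps (1)--(3), which the paper also cites from \cite{EE} and \cite{RR1}), and then invert.

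However, your step (4) has a genuine gap as written. The ``degree-by-degree induction'' you sketch does not go through. Writing $G = I - Mt + R$ and $F = h_A$, the recursion $F_n \geq \sum_{i<n} F_i [Mt-R]_{n-i}$ and the analogous identity for $[G^{-1}]_n$ give
\[
F_n - [G^{-1}]_n \;\geq\; (F_{n-1}-[G^{-1}]_{n-1})M \;-\; \sum_{j\geq 2}(F_{n-j}-[G^{-1}]_{n-j})R_j,
\]
and the right-hand side can be negative even when all $F_i - [G^{-1}]_i \geq 0$ for $i<n$, because the $R_j$ terms enter with a minus sign. So the naive induction stalls.

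The paper (following \cite{PP}) handles this with a short trick that avoids induction: multiply $F(t)G(t) \geq I$ on the right by the nonnegative series $|G(t)^{-1}|$, and observe that $G(t)\,|G(t)^{-1}| = I + E(t)$ with $E(t) \in (t^k)$, where $k$ is the first degree at which $G(t)^{-1}$ acquires a negative entry. Then $F(t) + F(t)E(t) \geq |G(t)^{-1}|$; since $F(t)E(t) \in (t^k)$ and $|G(t)^{-1}|$ vanishes in degrees $\geq k$, the inequality $F(t) \geq |G(t)^{-1}|$ follows in all degrees at once. Your concerns about the $R$ vs.\ $R^T$ bookkeeping and the one-sided reduction are legitimate but routine; it is this inversion step that needs the extra idea.
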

\begin{proof}
This is similar to \cite[Theorem 2.3.4]{EE}.
As in the proof of that result, considering the projective resolution of $S = A/A_{\geq 1} \cong k^m$ as a graded $A$-module leads to the inequality $h_A(t)  (I - Mt + R) \geq I$.  See also \cite[Lemma 3.4]{RR1}.

We then argue the absolute values as in \cite[Proposition 2.3]{PP}.
Let $F(t), G(t)$ be matrix-valued power series, where $F(t) \geq 0$, $G(t)$ is invertible in $M_r(\mb{Q}\llbracket t\rrbracket)$, and $F(t)G(t) \geq I$.   We claim that $F(t) \geq |G(t)\inv|$.  Let $k$ be minimal such that $G(t)\inv$ has a negative entry in some coefficient at $t^k$ (if no such $k$ exists put $k = \infty$). Hence, the equation $F(t) \geq |G(t)\inv|$ holds trivially in degrees $\geq k$. In degrees $< k$, $|G(t)\inv| = G(t)\inv$, so $F(t) \geq |G(t)\inv|$ follows from the hypothesis $F(t)G(t) \geq I$.


Applying the claim with $F(t) = h_A(t)$ and $G(t) = (I - Mt + R)$ gives the desired result.
\end{proof}

Applying the previous proposition to the special case of algebras defined by a superpotential yields the following corollary. We first set up some notation.
Suppose $Q$ is a finite quiver with $m$ vertices, and suppose that $Q'$ is a full subquiver of $Q$.  In other words, $Q'$ consists of some
subset of vertices of $Q$ and all arrows in $Q$ which have both head and tail in that subset.  Assume by renumbering if necessary that the vertices in $Q'$ are the ones labeled $1, \dots, m'$, where $m' \leq m$.  Given an $m \times m$ matrix $N$, let $\widetilde{N}$ be the $m' \times m'$ submatrix given by taking the first $m'$ rows and columns.  We have an identification
$\kk Q' = \kk Q/J$, where $J$ is the ideal spanned by all paths (including trivial ones) involving a vertex in $Q \setminus Q'$.  Now given a graded algebra $A = \kk Q/I$, its \emph{restriction} to $Q'$ is the algebra $A' = \kk Q'/I'$, where $I' = (J + I)/J$.
\begin{corollary}
\label{cor.gs}
Let $Q$ be a finite quiver with incidence matrix $M$, and let $A = \kk Q/(\partial_a \omega : a \in Q_1)$ be a derivation-quotient algebra for some twisted superpotential $\omega$ of degree $d$.  Let $\mu$ be the corresponding permutation of the vertices of $Q$, so that
each path in $\omega$ goes from a vertex $i$ to the vertex $\mu(i)$.  Let $P$ be the corresponding permutation
matrix, where $P_{ij} = \delta_{\mu(i) j}$.  Let $Q'$ be a full subquiver of $Q$, and let $A' = \kk Q'/I'$ be the restriction as defined above.

Then we have the inequality of $m' \times m'$ matrix valued Hilbert series:
\[
h_{A'}(t) \geq \left| \left( I - \widetilde{M}t + \widetilde{PM^T}t^{d-1} \right)^{-1}\right|.
\]
\end{corollary}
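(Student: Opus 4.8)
The plan is to apply Proposition~\ref{prop.gs} directly to the restricted algebra $A'=\kk Q'/I'$, whose quiver $Q'$ has incidence matrix $\widetilde M$, after identifying the relation‑counting matrix of $A'$ with $\widetilde{PM^T}\,t^{d-1}$.

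First I would pin down the relations of $A'$. Writing $\kk Q'=\kk Q/J$ with $J$ the ideal spanned by paths through a vertex of $Q\setminus Q'$, we have $A'=\kk Q/(J+I)$, so $I'$ is generated as an ideal of $\kk Q'$ by the images $\overline{\partial_a\omega}$ of the defining relations, $a\in Q_1$. Each path occurring in $\omega$ runs from some vertex $i$ to $\mu(i)$ by hypothesis, so if such a path begins with the arrow $a$ then applying $\partial_a$ deletes $a$ and leaves a path from $t(a)$ to $\mu(s(a))$; summing, $\partial_a\omega\in e_{t(a)}(\kk Q)_{d-1}e_{\mu(s(a))}$, hence $\overline{\partial_a\omega}\in e_{t(a)}(\kk Q')_{d-1}e_{\mu(s(a))}$, which is the zero space unless both $t(a)$ and $\mu(s(a))$ are vertices of $Q'$. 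Thus $I'$ is generated in the single degree $d-1$. Taking as generating family the one indexed by arrows (this is harmless: redundant or zero generators only weaken the Golod--Shafarevich bound), for $1\le i,j\le m'$ the number of generators lying in $e_i(\kk Q')_{d-1}e_j$ is the number of arrows $a$ of $Q$ with $t(a)=i$ and $\mu(s(a))=j$, i.e.\ the number $M_{\mu^{-1}(j),\,i}$ of arrows of $Q$ from $\mu^{-1}(j)$ to $i$.

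Next I would rewrite this count using that $M$ and $P$ commute. (This holds here because any lift of the twisting automorphism to an automorphism of $\kk Q$ permutes the arrow spaces $e_k(\kk Q)_1e_l$ by its action on vertices, forcing $M$ to be invariant under simultaneously permuting its rows and columns by $\mu$, equivalently $MP=PM$; it is also recorded, in the twisted Calabi--Yau case, in Theorem~\ref{thm.hilb2}(1).) A direct computation then gives $(PM^T)_{ij}=M_{j,\mu(i)}=M_{\mu^{-1}(j),\,i}$ for all $i,j\le m'$, so the relation matrix produced for $A'$ by the recipe of Proposition~\ref{prop.gs} is exactly $R=\widetilde{PM^T}\,t^{d-1}$. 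Feeding $A'$, its incidence matrix $\widetilde M$, and this $R$ into Proposition~\ref{prop.gs} then yields
\[
h_{A'}(t)\ \geq\ \bigl|\,(I-\widetilde M t+\widetilde{PM^T}\,t^{d-1})^{-1}\,\bigr|,
\]
which is the assertion.

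The substance is entirely in the bookkeeping of the middle step; I expect the main obstacle to be treating correctly the arrows whose source or target leaves $Q'$ (their contributions to $I'$ vanish, but they must neither be overcounted nor overlooked), together with verifying that it is precisely the commutation $MP=PM$ that repackages the raw count $M_{\mu^{-1}(j),i}$ as the clean submatrix $\widetilde{PM^T}$. Everything else is a black‑box application of the matrix Golod--Shafarevich bound already proved in Proposition~\ref{prop.gs}.
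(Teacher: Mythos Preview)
Your proposal is correct and follows essentially the same route as the paper: identify $\widetilde{M}$ as the incidence matrix of $Q'$, count the relations $\overline{\partial_a\omega}$ from $i$ to $j$ as $M_{\mu^{-1}(j),i}$, use $MP=PM$ to rewrite this as $(PM^T)_{ij}$, and feed the result into Proposition~\ref{prop.gs}. If anything you are slightly more careful than the paper: you justify $MP=PM$ directly from the fact that the twisting automorphism of $\kk Q$ permutes the arrow spaces (the paper cites Theorem~\ref{thm.hilb2}, which strictly speaking assumes $A$ is twisted Calabi--Yau, an assumption not present in the corollary), and you note explicitly that keeping redundant or zero generators in the relation count only weakens the bound.
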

\begin{proof}
First, $\widetilde{M}$ is clearly the incidence matrix of the full subquiver $Q'$.
If $a$ is an arrow from $i$ to $j$, any path contained in the superpotential that begins with $a$ has the form $ap$ where $p$ is a path of length $d-1$ from $j$ to $\mu(i)$; thus $\partial_a \omega$ is a relation from $j$ to $\mu(i)$. The ideal $I'$ is generated by those relations $\partial_a \omega$ that do not become $0$ in $\kk Q'$; in particular, any such relation must start and end at vertices in $Q'$.
If $k, \ell \in Q_0'$, then the number of relations of the form $\partial_a \omega$ from $k$ to $\ell$ is the same as
the number of arrows in $Q$ from $\mu^{-1}(\ell)$ to $k$, which is $M_{\mu^{-1}(\ell), k}$.   Thus
$(PM^T)_{k, \ell} = (M)_{\ell, \mu(k)} = M_{\mu^{-1}(\ell), k}$ since $PM = MP$ by Theorem~\ref{thm.hilb2}.
Thus $I'$ is generated by a set $R'$ of relations with corresponding weighted incidence matrix $\widetilde{P M^T} t^{d-1}$.
The result now follows from Proposition~\ref{prop.gs}.
\end{proof}

In our applications of the results above  we will need the following simple analytic lemma.
\begin{lemma}
\label{lem.recur}
Let $s \geq 2$ be an integer and $m \in \mathbb{R}$.  Consider $f(x) = x^s - m x^{s-1} + m$.
Define a sequence $\{r_n \}$ by the recurrence relation $r_1 = r_2 = \dots = r_{s-1} = m$, and
$\displaystyle r_n = m\left(1 - \frac{1}{r_{n-s+1} r_{n-s+2} \dots r_{n-1}}\right)$ for $n \geq s$.
Suppose that $\sqrt[s-1]{s} \frac{s}{s-1}  \leq m$.  Then $f(x)$ has a unique real root $u$
with $\frac{s-1}{s}m \leq u < m$.  Moreover $u < r_n \leq r_{n-1} \leq m$ holds for all $n \geq 2$.
\end{lemma}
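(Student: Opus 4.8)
The plan is to treat the two assertions of the lemma separately, the first by elementary calculus and the second by a strong induction on $n$.

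For the statement about $f$, I would study $f(x) = x^s - m x^{s-1} + m = x^{s-1}(x-m) + m$ on the ray $(0,\infty)$, noting first that $m > 0$ since $\sqrt[s-1]{s}\,\frac{s}{s-1} > 1$. The derivative factors as $f'(x) = x^{s-2}\bigl(sx-(s-1)m\bigr)$, so on $(0,\infty)$ the function $f$ strictly decreases on $\bigl(0,\frac{s-1}{s}m\bigr)$ and strictly increases on $\bigl(\frac{s-1}{s}m,\infty\bigr)$, attaining its minimum on this ray at $x^\ast = \frac{s-1}{s}m$. A direct computation gives
\[
f(x^\ast) = m\Bigl(1 - \tfrac{(s-1)^{s-1}}{s^s}\,m^{s-1}\Bigr),
\]
and one checks that the hypothesis $\sqrt[s-1]{s}\,\frac{s}{s-1}\le m$ is exactly equivalent to $f(x^\ast)\le 0$. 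Since $f(m) = m > 0$, the intermediate value theorem together with strict monotonicity of $f$ on $[x^\ast,\infty)$ yields a unique root $u$ with $\frac{s-1}{s}m = x^\ast \le u < m$, which is the first claim.

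For the inequalities on the sequence, the key device is the increasing function $\phi(y) = m\bigl(1-\tfrac1y\bigr)$ on $(0,\infty)$: the recurrence reads $r_n = \phi(r_{n-s+1}\cdots r_{n-1})$ for $n\ge s$, while rewriting $f(u)=0$ as $u^{s-1}(m-u)=m$ and dividing by $u^{s-1}>0$ (legitimate since $u\ge\frac{s-1}{s}m>0$) gives the fixed-point identity $u = \phi(u^{s-1})$. I would then prove by strong induction the statement $H(n)$: \emph{$u < r_k \le r_{k-1}$ for all $2\le k\le n$}. The cases $2\le n\le s-1$ are immediate since $r_1=\cdots=r_{s-1}=m$ and $u<m$. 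For the inductive step, assume $H(n-1)$ with $n\ge s$. First, $r_j>u>0$ for $1\le j\le n-1$ (using $r_1=m>u$ to cover $n=s$), so $r_{n-s+1}\cdots r_{n-1}>u^{s-1}$ and monotonicity of $\phi$ gives $r_n = \phi(r_{n-s+1}\cdots r_{n-1}) > \phi(u^{s-1}) = u$. Second, $r_n\le r_{n-1}$: for $n=s$ this is just $r_s = m(1-m^{-(s-1)})\le m = r_{s-1}$, and for $n\ge s+1$ both $r_n$ and $r_{n-1}$ come from the recurrence, so $r_n\le r_{n-1}$ is equivalent to $r_{n-s+1}\cdots r_{n-1}\le r_{n-s}\cdots r_{n-2}$; telescoping, $\frac{r_{n-s+1}\cdots r_{n-1}}{r_{n-s}\cdots r_{n-2}} = \frac{r_{n-1}}{r_{n-s}}\le 1$ by the weak-decrease half of $H(n-1)$ (valid since $1\le n-s<n-1$). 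This proves $H(n)$, and iterating $r_{n-1}\le\cdots\le r_1=m$ also gives $r_{n-1}\le m$, so $u<r_n\le r_{n-1}\le m$ for all $n\ge 2$.

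The only genuine obstacle is the index bookkeeping in the inductive step: one must interleave the two conclusions "$u<r_j$" (needed to bound the product below) and the weak decrease "$r_j\le r_{j-1}$" (needed, down to index $n-s$, to force $r_n\le r_{n-1}$) inside a single strong induction, and handle the boundary case $n=s$ where the index $n-s+1$ equals $1$ and the value $r_1=m$ comes from the initial data rather than the recurrence. Everything else is the elementary analysis of $f$ above together with routine manipulation of the recurrence.
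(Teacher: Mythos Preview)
Your proof is correct and follows essentially the same approach as the paper's: the same calculus analysis of $f$ (sign of $f$ at $\frac{s-1}{s}m$ and at $m$, monotonicity via $f'$), and the same strong induction using the two key inequalities $r_n > m(1-1/u^{s-1}) = u$ and the telescoped product comparison to obtain $r_n \le r_{n-1}$. Your version is in fact slightly more careful than the paper's, in that you explicitly separate the boundary case $n=s$ (where $r_{n-1}$ is not given by the recurrence) and make the positivity $r_j>u>0$ explicit before dividing.
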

\begin{proof}
A direct calculation shows that $f(\frac{(s-1) m}{s}) = - \frac{(s-1)^{s-1} m^s}{s^s} + m \leq 0$, since $\sqrt[s-1]{s} \frac{s}{s-1} \leq m$.
As $f(m) = m$, the intermediate value theorem
yields a real root $u$ of $f$ with $\frac{s-1}{s}m \leq u < m$.  Moreover, $f'(x)$ has only $\frac{s-1}{s}m$ and $0$ as roots, so $f$ must be increasing
on the interval from $\frac{s-1}{s}m$ to $m$ and so $u$ is unique.

The inequalities $u < r_n \leq  r_{n-1} \leq m$ are now proved for all $n \geq 1$ by induction.  They hold by definition for $2 \leq n \leq s-1$.
Assuming that $n \geq s$ and $u < r_i$ for all $i \leq n-1$, we have
\[
r_n = m \left(1 - \frac{1}{r_{n-s+1} \cdots r_{n-1}} \right) > m \left(1 - \frac{1}{u^{s-1}} \right) = u,
\]
proving that $u < r_n$.  Then assuming $r_i \leq r_{i-1} \leq m$ for $i \leq n-1$ we have
\[
r_n = m \left(1 - \frac{1}{r_{n-s+1} \cdots r_{n-1}} \right) \leq m \left(1 - \frac{1}{r_{n-s} \cdots r_{n-2}} \right) = r_{n-1} \leq m,
\]
completing the induction step.
\end{proof}

The main application in this paper of the Hilbert series bounds above will be to
limit the number of loops that can occur at a vertex in a twisted Calabi-Yau algebra with polynomial growth.
\begin{proposition}
\label{prop.loops}
Let $A = \kk Q/(\partial_a \omega : a \in Q_1)$ be a derivation-quotient algebra for some finite quiver $Q$ and some $\mu$-twisted superpotential $\omega$ of degree $d$.  
If $\GKdim(A) < \infty$ and the vertex $v$ of $Q$ is fixed by $\mu$, then the number of loops at $v$ in $Q$ is less than or equal to $\ell(d)$, where $\ell(3) = 3$, $\ell(4) = \ell(5) = 2$, and $\ell(d) = 1$ for $d \geq 6$.
\end{proposition}
\begin{proof}
Recall that $\omega$ must be $\mu^{-1}$-twisted, and so is a sum of paths, each from a vertex $i$ to a vertex $\mu(i)$.
We apply Corollary~\ref{cor.gs} to the full subquiver $Q'$ consisting of the vertex $v$ and all incident loops.
Assume the notation introduced before Corollary~\ref{cor.gs}, and relabel so that $v$ is the vertex labeled $1$.  Let $P$ be the matrix the permutation $\mu$ induces on the vertices.  Since the vertex $1$ is fixed by $\mu$, $\widetilde{PN} = \widetilde{N} = N_{1,1}$ for any matrix $N$, and so Corollary~\ref{cor.gs} gives the bound $h_{A'}(t) \geq | (1 - mt + mt^{d-1})^{-1} |$, where $m$ is the number of loops at $v$.

Let $s = d-1$.  Now $(1 -mt + mt^s)^{-1} = \sum_{n \geq 0} a_n t^i$, where $a_n = m a_{n-1} - m a_{n-s}$ for $n \geq s$, with initial conditions $a_0 = 1, a_1 = m, \dots, a_{s-1} = m^{s-1}$.  Defining $r_n = a_n/a_{n-1}$ for $n \geq 1$, $r_n$ satisfies the recurrence studied in Lemma~\ref{lem.recur}, namely $\displaystyle r_n = m\left(1 - \frac{1}{r_{n-s+1} r_{n-s+2} \dots r_{n-1}}\right)$ for $n \geq s$, with initial conditions $r_1 = r_2 = \dots = r_{s-1} = m$.

Suppose now that $m > \ell(d)$.  A straightforward case-by-case check shows that this implies $\sqrt[s-1]{s} \frac{s}{s-1}  \leq m$. Thus Lemma~\ref{lem.recur} applies and shows in particular that $r_n > 0$ for all $n \geq 1$; thus $a_n > 0$ for all $n \geq 1$ and $|(1-mt+mt^s)^{-1}| = (1-mt + mt^s)^{-1}$.
Finally, the roots of $(1-mt + mt^s)$ are the reciprocals of the roots of $(x^s - mx^{s-1} + m)$, and we saw in Lemma~\ref{lem.recur} that this polynomial has a real root $u$ with $\frac{s-1}{s}m \leq u$.  
Since $m$ is an integer and $m \geq \ell(d)+1$, then $u>1$. The condition $r_n > u$ follows from Lemma \ref{lem.recur} and so, by definition of the $r_n$, the $a_n$ grow exponentially with $n$.
Hence the coefficients of the series $h_{A'}(t)$ grow exponentially and the algebra $A'$ has exponential growth. Finally, since $A'$ is by definition a homomorphic image of $A$, this implies that $A$ has exponential growth, contradicting the hypothesis.  We conclude that
$m \leq \ell(d)$ as required.
\end{proof}

The same idea as in the proof above can also be used to rule out full subquivers on more than one vertex,
and some examples are provided in the next proposition.
This is sensitive in general, and the most difficult part of applying the bound is to prove that the resulting series has positive coefficients.
We do not use this result in our classification, and so we omit the proof.
\begin{proposition}
\label{prop.badblock}
Let $A =\kk Q/(\partial_a \omega : a \in Q_1)$ be a derivation-quotient algebra for a $\mu$-twisted superpotential $\omega$ of degree $3$, where $\mu$ fixes the vertices $v,w$ of $Q$. Let $Q'$ be the
full subquiver on the vertices $v, w$. If $\GKdim(A) < \infty$, then the incidence matrix $N$ of $Q'$ cannot be of either of the following types:
\begin{align*}
\begin{pmatrix}2 & y \\ y & 2\end{pmatrix} \; (y \geq 2)  \quad\text{or}\quad
\begin{pmatrix}3 & z \\ z & 3\end{pmatrix} \; (z \geq 3).
\end{align*}
\end{proposition}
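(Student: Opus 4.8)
The plan is to mimic the proof of Proposition~\ref{prop.loops}, but restricting to the full two-vertex subquiver $Q'$ on $\{v,w\}$ rather than to a single vertex, and then collapsing the resulting $2\times 2$ matrix inequality to a scalar one by means of a common eigenvector of $N$. Suppose for contradiction that $A$ is $\mu$-twisted Calabi-Yau of finite GK dimension with $\mu$ fixing $v$ and $w$, and that the incidence matrix of $Q'$ is $N=\begin{psmatrix} a & b \\ b & a\end{psmatrix}$ with $(a,b)=(2,y)$, $y\ge 2$, or $(a,b)=(3,z)$, $z\ge 3$. Let $A'=\kk Q'/I'$ be the restriction of $A$ to $Q'$, a homomorphic image of $A$. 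Since $\mu$ fixes both vertices of $Q'$, one has $\widetilde{PM^T}=\widetilde{M^T}=N^T=N$, and the number of relations generating $I'$ in each bidegree is at most the corresponding entry of $Nt^2$ (some $\partial_a\omega$ may vanish in $\kk Q'$, which only decreases the count). As in the proof of Proposition~\ref{prop.gs}, the projective resolution of $S=A'/A'_{\ge 1}\cong\kk^2$ over $A'$ gives $h_{A'}(t)\,(I-Nt+R')\ge I$ for the actual relation-incidence matrix $R'\le Nt^2$, and since $h_{A'}(t)\ge 0$ and $Nt^2-R'\ge 0$ this upgrades to the coefficient-wise inequality $h_{A'}(t)\,(I-Nt+Nt^2)\ge I$.

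Next I would right-multiply by the Perron eigenvector $\mathbf 1=(1,1)^T$ of $N$, with eigenvalue $m:=a+b$. Because $(I-Nt+Nt^2)\mathbf 1=(1-mt+mt^2)\mathbf 1$, writing $G(t)$ for either component of $h_{A'}(t)\mathbf 1$ (a power series with nonnegative coefficients and constant term $1$) we get the scalar inequality $(1-mt+mt^2)\,G(t)\ge 1$, hence $G(t)\ge (1-mt+mt^2)^{-1}$ coefficient-wise. In both families $m=a+b\ge 4$, so $mt^2-mt+1$ has discriminant $m(m-4)\ge 0$ and therefore two positive real roots $r_1\le r_2$ in $(0,1)$ with $r_1r_2=1/m$, forcing $r_1\le 1/\sqrt m<1$. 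Hence $(1-mt+mt^2)^{-1}=\sum_n a_nt^n$ has strictly positive coefficients growing like $r_1^{-n}$, i.e.\ exponentially; equivalently, this is Lemma~\ref{lem.recur} with $s=2$, whose hypothesis reads exactly $4\le m$. Therefore $\dim_\kk (A')_n=\sum_{i,j}(H_n)_{ij}\ge [t^n]G(t)\ge a_n$ grows exponentially, so $A'$, and hence $A$ (of which $A'$ is a quotient), has exponential growth, contradicting $\GKdim A<\infty$.

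The point that needs the most care is to run the argument in a form that avoids proving positivity of the full matrix series $(I-Nt+Nt^2)^{-1}$, which for $2\times 2$ matrices of this shape is genuinely awkward; the eigenvector device sidesteps this, since the scalar series $(1-mt+mt^2)^{-1}$ is positive and exponential precisely because $m\ge 4$ makes its poles real. It is worth noting that for the nearby blocks not excluded by the proposition, such as $\begin{psmatrix}2&1\\1&2\end{psmatrix}$ or $\begin{psmatrix}2&0\\0&2\end{psmatrix}$, one has $m\le 3$, the poles become complex or the series oscillates, and the argument correctly fails — consistent with such blocks actually occurring in allowable types. Beyond this, one should just check the bookkeeping in the first paragraph: that $\widetilde{PM^T}=N$ under the hypothesis on $\mu$, and that replacing $R'$ by the over-estimate $Nt^2$ preserves the direction of the inequality $h_{A'}(t)(I-Nt+Nt^2)\ge I$.
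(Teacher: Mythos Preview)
The paper explicitly omits the proof of this proposition, only remarking that ``the same idea as in the proof above can also be used to rule out full subquivers on more than one vertex'' and that ``the most difficult part of applying the bound is to prove that the resulting series has positive coefficients.'' So there is no detailed argument to compare against; your task was to supply one.

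Your argument is correct and is in the spirit the paper indicates, with one genuinely nice simplification. A direct imitation of Proposition~\ref{prop.loops} would require showing that the $2\times 2$ matrix series $(I-Nt+Nt^2)^{-1}$ has nonnegative entries, which is exactly the step the paper flags as delicate. You sidestep this entirely: by right-multiplying the basic inequality $h_{A'}(t)(I-Nt+Nt^2)\ge I$ by the Perron eigenvector $\mathbf{1}$, you collapse the problem to the scalar inequality $(1-mt+mt^2)G(t)\ge 1$ with $m=a+b\ge 4$, and then positivity and exponential growth of $(1-mt+mt^2)^{-1}$ follow immediately from Lemma~\ref{lem.recur} with $s=2$ (whose hypothesis reads precisely $m\ge 4$), or equivalently from the observation that $1-mt+mt^2$ has a real root in $(0,1)$. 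The bookkeeping you flag at the end is all sound: $\widetilde{PM^T}=N$ because $\mu$ fixes both vertices and $N$ is symmetric, and replacing the actual relation count $R'$ by the over-estimate $Nt^2$ preserves $h_{A'}(t)(I-Nt+Nt^2)\ge I$ since $h_{A'}(t)\ge 0$ and $Nt^2-R'\ge 0$.

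As you observe, your argument actually excludes any symmetric block $\begin{psmatrix}a&b\\b&a\end{psmatrix}$ with $a+b\ge 4$, which is slightly more than the proposition asserts (for instance it also rules out $\begin{psmatrix}3&1\\1&3\end{psmatrix}$ and $\begin{psmatrix}3&2\\2&3\end{psmatrix}$); this is a bonus, not a defect.
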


\section{The two-vertex case}
\label{sec.2vert}

In this section we completely classify two-vertex quivers which support algebras satisfying Hypothesis \ref{hyp.scy}.
In this case, there are only two possibilities for the permutation matrix $P$ in the type, and we handle each separately.  Theorem \ref{thm.2vert} follows from Lemmas \ref{lem.2vert1} and \ref{lem.2vert2} below.

\begin{lemma}
\label{lem.2vert1}
Let $A \iso \kk Q/(\partial_a \omega : a \in Q_1, |Q_0|=2)$ satisfy Hypothesis~\ref{hyp.scy} with
type $\left(M, P = \left( \begin{matrix}0 & 1 \\ 1 & 0\end{matrix} \right), \deg(\omega)\right)$.
Then $M$ is one of the following:
\[
\deg(\omega)=3:
\begin{pmatrix}1 & 2 \\ 2 & 1\end{pmatrix},
\begin{pmatrix}2 & 1 \\ 1 & 2\end{pmatrix},
\begin{pmatrix}0 & 3 \\ 3 & 0\end{pmatrix}, \qquad
\deg(\omega)=4:
\begin{pmatrix}1 & 1 \\ 1 & 1\end{pmatrix}.
\]
Moreover, every such $M$ occurs for an algebra $A$ of the form $R \# \kk \mb{Z}_2$, where $R$ is an Artin-Schelter regular
algebra of dimension $3$.
\end{lemma}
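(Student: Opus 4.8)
The plan is to read off a short list of candidate incidence matrices from Theorem~\ref{thm.hilb2}, and then to realize each surviving candidate as a skew group algebra $R\#\kk\mb{Z}_2$ with $R$ Artin--Schelter regular of dimension $3$.

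\emph{Step 1: candidates.} By Theorem~\ref{thm.hilb2}(1), $M$ commutes with $P=\begin{psmatrix}0&1\\1&0\end{psmatrix}$, so by Proposition~\ref{prop.perms}(1) $M$ is circulant, i.e.\ $M=\begin{psmatrix}a&b\\b&a\end{psmatrix}$ with $a\in\mb{Z}$, $a\geq 0$, and connectedness of $Q$ forces $b\geq 1$. Such an $M$ is symmetric, hence normal, with eigenvalues $a\pm b$, so $\rho(M)=a+b$. Since $\GKdim(A)=3$, Theorem~\ref{thm.hilb2}(3) gives $d\in\{3,4\}$ and $a+b=\rho(M)=6-d$. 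Combined with $a\geq 0$, $b\geq 1$ this leaves $M\in\left\{\begin{psmatrix}2&1\\1&2\end{psmatrix},\begin{psmatrix}1&2\\2&1\end{psmatrix},\begin{psmatrix}0&3\\3&0\end{psmatrix}\right\}$ for $d=3$ and $M\in\left\{\begin{psmatrix}1&1\\1&1\end{psmatrix},\begin{psmatrix}0&2\\2&0\end{psmatrix}\right\}$ for $d=4$. The last possibility is excluded by a parity argument: that quiver has no loops, so every path of length $4$ from a vertex $i$ returns to $i$; but $\omega$ is $\mu^{-1}$-twisted and $\mu$ is the transposition, so every monomial of $\omega$ is a path of length $d=4$ from some $i$ to $\mu(i)\neq i$, forcing $\omega=0$, which is absurd. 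This yields exactly the four matrices listed.

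\emph{Step 2: realization.} For each of the four matrices it suffices to exhibit an AS-regular algebra $R$ of dimension $3$ with $\GKdim(R)=3$ and an order-$2$ graded automorphism $g$ of $R$ such that the McKay quiver of the $\mb{Z}_2$-action on $R$ has incidence matrix $M$ and $\hdet(g)=-1$. Indeed, since $\kk\mb{Z}_2\cong\kk^2$, the algebra $A=R\#\kk\mb{Z}_2$ is already of the form $\kk Q/I$ for the McKay quiver $Q$, it is twisted Calabi--Yau of dimension $3$ with $\GKdim(A)=3$ by the smash-product results of Section~\ref{sec.motive}, its superpotential has degree $\deg\omega_R\in\{3,4\}$, and its Nakayama automorphism acts on $A_0\cong\kk\mb{Z}_2$ by the winding automorphism $\Xi^l_{\hdet}$, which with $\hdet(g)=-1$ interchanges the idempotents $\tfrac{1\pm g}{2}$, i.e.\ gives $P'=P$. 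Writing $p,q$ for the dimensions of the $(\pm1)$-eigenspaces of $g$ on $R_1$, the McKay quiver of $\mb{Z}_2$ has incidence matrix $\begin{psmatrix}p&q\\q&p\end{psmatrix}$, so we take $(p,q)=(0,3),(2,1),(1,2)$ for the three $d=3$ matrices (with $R$ quadratic, $\dim R_1=3$), and $(p,q)=(1,1)$ for the $d=4$ matrix (with $R$ cubic, $\dim R_1=2$). Now $g$ scales each monomial of $\omega_R$ by $(-1)$ to the power of its degree in the $(-1)$-eigenspace, and $\omega_R$ is a $g$-eigenvector (being unique up to scalar), so $\hdet(g)$ equals $(-1)$ to that common parity; we must therefore arrange this parity to be odd. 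For $(p,q)=(0,3)$ and $(2,1)$ one may take $R=\kk[x,y,z]$ (or a quantum polynomial ring) with $g=-\id$, resp.\ $g=\operatorname{diag}(1,1,-1)$, since there $\omega_R$ is the full antisymmetrization of $xyz$ and $\hdet(g)=\det(g)=-1$.

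\emph{Main obstacle.} The two remaining cases, $M=\begin{psmatrix}1&2\\2&1\end{psmatrix}$ ($d=3$, $q=2$) and $M=\begin{psmatrix}1&1\\1&1\end{psmatrix}$ ($d=4$, $q=1$), cannot be realized with the polynomial ring: for $q=2$ one would need $\det(g)=-1$, which is impossible, and for $q=1$ the ``balanced'' cubic AS-regular algebras have a superpotential of even degree in the $(-1)$-eigenspace. The remedy is to choose $R$ so that $\omega_R$ is ``unbalanced'', every monomial having odd total degree in the $(-1)$-eigenspace variables. For $d=4$ this is achieved by a generic cubic (Sklyanin-type) AS-regular algebra $\kk\langle x,y\rangle/(\partial_x\omega_R,\partial_y\omega_R)$ whose degree-$4$ superpotential has all monomials of $y$-degree $1$ or $3$, with $g=\operatorname{diag}(1,-1)$. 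For $d=3$ one must produce a quadratic AS-regular algebra of dimension $3$ admitting the automorphism $g=\operatorname{diag}(1,-1,-1)$ whose degree-$3$ superpotential has all monomials of even $x$-degree, and the real work of the proof is to write down such an $R$ explicitly (or locate it in the Artin--Schelter classification) and verify its regularity; once $R$ and $g$ are in hand, computing the McKay quiver and $\hdet(g)$ is routine.
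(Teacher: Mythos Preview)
Your Step~1 follows the paper's route exactly up to the exclusion of $\begin{psmatrix}0&2\\2&0\end{psmatrix}$ at $d=4$. There you use a different and cleaner argument: the quiver is bipartite, so there are no length-$4$ paths between distinct vertices, while a $\mu^{-1}$-twisted superpotential with $\mu$ the transposition would have to be a combination of such paths. The paper instead simultaneously diagonalizes $M$ and $P$, finds the factor $1+2t+2t^3+t^4$ of $\det p(t)$, and uses the intermediate value theorem to locate a real zero strictly inside the unit circle, contradicting Theorem~\ref{thm.hilb2}(2). Your parity argument is more elementary and avoids any computation with $p(t)$; the paper's argument, on the other hand, does not use that $P$ arises from the Nakayama automorphism acting on the superpotential, only the Hilbert-series constraints.

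For Step~2 your framework is correct and matches the paper's, but you stop short of exhibiting the two nontrivial examples, and one claim needs adjusting. For $d=4$ the phrase ``generic cubic (Sklyanin-type)'' is misleading: a generic cubic AS-regular superpotential contains terms such as $x^2y^2$ of even $y$-degree, so $\hdet(\operatorname{diag}(1,-1))=+1$ there. What is needed is a cubic regular algebra whose superpotential involves only monomials of odd $y$-degree; in the Artin--Schelter classification this is type~$H$, not type~$A$ (Sklyanin), and the paper writes down an explicit $\omega(H)$ with this property. For $d=3$ and $(p,q)=(1,2)$ you correctly observe that the commutative polynomial ring cannot work, but rather than hunting for an exotic $R$ admitting the diagonal involution $\operatorname{diag}(1,-1,-1)$ with $\hdet=-1$, the paper keeps $R$ a skew polynomial ring (with all parameters $-1$, so $\omega_R=xyz+zxy+yzx+xzy+yxz+zyx$) and uses the \emph{non-diagonal} involution $\sigma:x\leftrightarrow y,\ z\mapsto -z$. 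This has eigenvalues $1,-1,-1$, hence McKay matrix $\begin{psmatrix}1&2\\2&1\end{psmatrix}$, and visibly $\sigma(\omega_R)=-\omega_R$. Diagonalizing $\sigma$ shows this is exactly an instance of the algebra you were looking for; the point is only that it is much easier to spot in a basis where $\sigma$ is not diagonal.
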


\begin{proof}
Write $M=\begin{pmatrix}a&b\\c&d\end{pmatrix}$.
By Theorem~\ref{thm.hilb2}, $MP = PM$ and so $a=d$ and $b=c$.  Hence, $M$ is symmetric with eigenvalues $a+b$ and $a-b$.
By Theorem \ref{thm.hilb2}, $\deg(\omega) = 3$ or $4$, and $\rho(M)+\deg(\omega)=6$.
Hence, for a degree 3 superpotential, the only possibilities are $(a,b)=(1,2)$, $(2,1)$, or $(0,3)$, since
if $(a, b) = (3, 0)$ then the corresponding quiver is not connected.  These give the possibilities stated.

For a degree 4 twisted superpotential, $\rho(M)=2$ so  that $(a,b)=(1,1)$ or $(0,2)$, since
$(2, 0)$ is excluded again since it leads to a disconnected quiver.  When $(a, b) = (0, 2)$ we examine
the resulting matrix Hilbert series.   In this case we have
$M = 2P$ and thus $p(t) = I - Mt + PM^T t^3 - Pt^4 = I-2Pt + 2I t^3 - P t^4$ diagonalizes to
$U^{-1} p(t) U = \operatorname{diag}(1 - 2t + 2t^3 - t^4, 1 + 2t + 2t^3 + t^4)$, where
$U = \begin{pmatrix} 1 & 1 \\ 1 & -1 \end{pmatrix}$. Note that $1 + 2t + 2t^3 + t^4$ does not have all of its zeros on the unit circle, since it has a real root between $-1$ and $1$ by the intermediate value theorem.  Thus
$\det U^{-1} p(t) U = \det p(t)$ has a zero which is not a root of unity, contradicting Theorem~\ref{thm.hilb2}.
Again this leaves precisely the one possibility above.

The superpotentials $\omega$ corresponding to the regular algebras $R=T(V)/(\partial_a \omega)$,
and the actions of $\mb{Z}_2$ on $R$ such that $A = R \# \kk \mb{Z}_2$ has each of the listed
types, are given in Table~\ref{table.2vert1}.  We explain this table in detail here, and leave most of the similar verifications of future tables to the reader.

It should be clear from $\omega$ whether $V$ has basis $\{x,y,z\}$ or $\{x,y\}$.  First, one should check
that each of the twisted potentials $\omega$ is good, that is, that the corresponding derivation-quotient algebra $A = \kk Q/(\partial_a \omega : a \in Q_1)$ is AS regular.  For the first three lines in the table,
the corresponding algebra is of the form
\[
A = \kk \langle x, y, z \rangle/(yx-pxy, xz-qzx, zy-ryz),
\] where either $p = q= r = 1$ or $p = q = r = -1$, and such skew polynomial rings are well known to be regular of dimension $3$ for any nonzero $p, q, r$.  In the last line of the table, $\omega(H)$ is of type $H$ in Artin and Schelter's classification of families of  twisted superpotentials in \cite[Table 3.9]{AS}.  The corresponding algebra $A(H) = \kk Q/(\partial_a \omega : a \in Q_1)$ must be regular by \cite[Theorem 3.10]{AS}, since this potential is already generic in its family.

Next, one verifies that the given action of $\sigma$ on $V$ gives an automorphism of $A$.
For this it suffices to show that the automorphism $\sigma$ induces on the tensor algebra $T(V)$
satisfies $\sigma(\omega) = a \omega$ for some nonzero scalar $a$.  Once this is done, recall that in fact
$a = \hdet(\sigma)$ by \cite[Theorem 3.3]{MS}.  Then the Nakayama automorphism of $A$ in degree $0$
is given by the winding automorphism of $\kk G$ where $g \mapsto \hdet(g) g$ for each group element.
Finally, one calculates the corresponding  permutation of the vertices of the McKay quiver $Q$
by writing $1 \in \kk G$ as a sum of central idempotents (which correspond to the irreducible representations of $G$,  and hence the vertices of the quiver).   For each entry of Table~\ref{table.2vert1}, one has $\hdet(\sigma) = -1$,
so that the winding automorphism does indeed switch the idempotents $e_1 = (e + \sigma)/2$ and $e_2 = (e - \sigma)/2$, leading to the claimed permutation matrix $P$. Note that since the other tables below are sorted by the intended $P$, in each table the $\hdet$ of $G$ should be the same for all entries.

Finally, the incidence matrix $M$ of the McKay quiver
$Q$ is easily found from the definition of the McKay quiver, by first decomposing $V$ as a direct sum of irreducible
representations of $G$.  For the reader's convenience we have listed all of these McKay quivers in an appendix.
For example, in the first entry of Table~\ref{table.2vert1}, diagonalizing the action of $\sigma$ we see that in the notation of the appendix the character of $\sigma$ is the sum of the three characters $\chi_1 + \chi_2 + \chi_2$, and so $M$ can be read off of the $(1, 2, 2)$ entry in the corresponding table there.
 \end{proof}

\begin{table}
\caption[Table1]{$G = \ZZ_2 = \langle \sigma \rangle$ acting on $R=T(V)/(\partial_a \omega)$, type of $R \# \kk G = (M, \begin{psmatrix}0 & 1 \\ 1 & 0 \end{psmatrix}, \deg(\omega))$.  Let $\omega(H) = y^3x - \zeta^3 xy^3 +\zeta^2 yxy^2 -\zeta y^2xy +x^3y + \zeta^3 yx^3 + \zeta^2 xyx^2 + \zeta  x^2yx$,
where $\zeta$ is a primitive $8$th root of $1$.}\label{table.2vert1}
\begin{tabular}{cccccc}\hline
$\sigma$ & $\omega$ &  $M$ &  \\ \hline
$\left(\begin{smallmatrix} 0 & 1 & 0 \\ 1 & 0 & 0 \\ 0 & 0 & -1 \end{smallmatrix}\right)$ & $(xyz + zxy + yzx)  +( xzy + yxz + zyx)$ & $\left(\begin{smallmatrix} 1 & 2 \\ 2 & 1 \end{smallmatrix}\right)$ \\
$\operatorname{diag}(1, 1, -1)$ & $(xyz +  zxy + yzx) +(- xzy - yxz - zyx)$ & $\left(\begin{smallmatrix}2 & 1 \\ 1 & 2 \end{smallmatrix}\right)$ \\
$\operatorname{diag}(-1, -1, -1)$ & $(xyz +  zxy + yzx)+ ( - xzy - yxz - zyx)$ & $\left(\begin{smallmatrix} 0 & 3 \\ 3 & 0 \end{smallmatrix}\right)$ \\
$\operatorname{diag}(-1, 1)$ &  $\omega(H)$ & $\left(\begin{smallmatrix}1 & 1 \\ 1 & 1 \end{smallmatrix}\right)$ &
\end{tabular}
\end{table}

\begin{lemma}
\label{lem.2vert2}
Let $A \iso \kk Q/(\partial_a \omega : a \in Q_1, |Q_0|=2)$ satisfy Hypothesis~\ref{hyp.scy} with
type $\left(M, P = I, s=\deg(\omega)\right)$.
Then $M$ is one of the following:
\[
\deg(\omega)=3:
	\begin{pmatrix}1 & 2 \\ 2 & 1\end{pmatrix},
	\begin{pmatrix}2 & 1 \\ 1 & 2\end{pmatrix}, \qquad
\deg(\omega)=4:
	\begin{pmatrix}1 & 1 \\ 1 & 1\end{pmatrix},
    \begin{pmatrix}0 & 2 \\ 2 & 0\end{pmatrix}.
\]
Again, every such $M$ occurs for an algebra $A$ of the form $R \# \kk \mb{Z}_2$, where $R$ is an Artin-Schelter regular
algebra of dimension $3$.
\end{lemma}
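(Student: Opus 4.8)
The plan is to read off the numerical constraints on $M$ from Theorem~\ref{thm.hilb2} and Proposition~\ref{prop.loops}, narrow to a short finite list, eliminate one matrix by the eigenvalue bound, and then realize each survivor by a skew group construction as in Lemma~\ref{lem.2vert1}. Write $M=\begin{psmatrix}a&b\\c&e\end{psmatrix}$ with $a,b,c,e\in\mathbb N$; by Hypothesis~\ref{hyp.scy} we have $d\in\{3,4\}$. Since $P=I$ the Nakayama automorphism fixes both vertices, so Proposition~\ref{prop.loops} gives $a,e\le\ell(d)$, that is $a,e\le 3$ when $d=3$ and $a,e\le 2$ when $d=4$. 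For the matrix polynomial $p(t)=I-Mt+M^{T}t^{d-1}-It^{d}$ of \eqref{def.mpoly} one has $p(1)=\begin{psmatrix}0&c-b\\b-c&0\end{psmatrix}$, so $\det p(1)=(b-c)^{2}$; since $\GKdim(A)=3$, Theorem~\ref{thm.hilb2}(2) forces $\det p$ to vanish at $t=1$, whence $b=c$ and $M$ is symmetric.

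Next I would compute $\det p(t)$ explicitly, using the factorizations $1-at+at^{2}-t^{3}=(1-t)(t^{2}+(1-a)t+1)$ when $d=3$ and $1-at+at^{3}-t^{4}=(1-t^{2})(t^{2}-at+1)$ when $d=4$. With $b=c$ the product $p_{12}p_{21}$ collapses to $b^{2}t^{2}(1-t)^{2}$ (resp.\ $b^{2}t^{2}(1-t^{2})^{2}$), giving $\det p(t)=(1-t)^{2}\bigl[(t^{2}+(1-a)t+1)(t^{2}+(1-e)t+1)-b^{2}t^{2}\bigr]$ when $d=3$ and $\det p(t)=(1-t^{2})^{2}\bigl[(t^{2}-at+1)(t^{2}-et+1)-b^{2}t^{2}\bigr]$ when $d=4$. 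Because $\GKdim(A)=3$, Theorem~\ref{thm.hilb2}(2) forces $\det p$ to vanish at $t=1$ to order at least $3$, hence the bracketed quartic vanishes at $t=1$; evaluating it there yields $b^{2}=(3-a)(3-e)$ when $d=3$ and $b^{2}=(2-a)(2-e)$ when $d=4$. Connectedness of $Q$ forces $b=c\ge 1$, so the right side is a positive perfect square, which together with the loop bound leaves only $(a,e,b)\in\{(0,0,3),(1,1,2),(2,2,1)\}$ for $d=3$ and $(a,e,b)\in\{(0,0,2),(1,1,1)\}$ for $d=4$. The type $M=\begin{psmatrix}0&3\\3&0\end{psmatrix}$ is then excluded because $M$ is symmetric with $P=I$ and $d=3$, so Theorem~\ref{thm.hilb2}(4) requires all eigenvalues of $M$ to lie in $[-1,3]$, while this $M$ has eigenvalue $-3$; for $d=4$ one may alternatively drop the loop bound and use $\rho(M)=6-d=2$ from Theorem~\ref{thm.hilb2}(3). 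This leaves exactly the four matrices in the statement, and in each case $\det p(t)$ is visibly a product of cyclotomic polynomials.

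For realizability I would exhibit a table like Table~\ref{table.2vert1}: for each type take $\mb Z_{2}=\langle\sigma\rangle$ acting on an AS regular algebra $R$ of dimension $3$ with $\hdet(\sigma)=1$, so that by the recipe of Section~\ref{sec.motive} the Nakayama automorphism of $R\#\kk\mb Z_{2}$ fixes the two central idempotents and $P=I$, and read $M$ off the McKay quiver. For $d=3$, $M=\begin{psmatrix}1&2\\2&1\end{psmatrix}$ is realized by $\kk[x,y,z]\#\kk\mb Z_{2}$ with $\sigma=\operatorname{diag}(1,-1,-1)$ (here $\hdet(\sigma)=\det(\sigma|_{R_{1}})=1$ and $R_{1}$ splits as one trivial plus two sign characters), while $M=\begin{psmatrix}2&1\\1&2\end{psmatrix}$ is realized by $\kk_{-1}[x,y,z]\#\kk\mb Z_{2}$ with $\sigma$ the transposition $x\leftrightarrow y$, which fixes the permanent-type superpotential of $\kk_{-1}[x,y,z]$ so that $\hdet(\sigma)=1$, and now $R_{1}$ splits as two trivial plus one sign. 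For $d=4$, $R$ is a two-generator AS regular algebra of dimension $3$: $M=\begin{psmatrix}0&2\\2&0\end{psmatrix}$ comes from $\sigma=-\id$ (so $\hdet(\sigma)=(-1)^{\deg\omega}=1$ automatically), and $M=\begin{psmatrix}1&1\\1&1\end{psmatrix}$ from a suitably chosen $R$ with $\sigma=\operatorname{diag}(1,-1)$, where $R$ is taken so that its degree-$4$ superpotential involves $y$ to an even power in each monomial, forcing $\sigma(\omega)=\omega$ and $\hdet(\sigma)=1$.

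The classification half is elementary once Theorem~\ref{thm.hilb2} and Proposition~\ref{prop.loops} are available, so the main obstacle is the realization of $M=\begin{psmatrix}2&1\\1&2\end{psmatrix}$ (and of $M=\begin{psmatrix}1&1\\1&1\end{psmatrix}$ for $d=4$): the obvious candidate algebras — the polynomial ring, a skew polynomial ring, or a generic two-generator regular algebra — all have $\hdet(\sigma)=\det(\sigma|_{R_{1}})=-1$ for the relevant involution, which gives $P\ne I$, so one is forced to exploit the extra symmetry of a non-generic regular algebra whose (twisted) superpotential is fixed by such an involution.
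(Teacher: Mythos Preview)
Your argument is correct, and the realizations you sketch are exactly the ones in the paper's Table~\ref{table.2vert2}. The classification half, however, takes a somewhat different path from the paper. The paper shows $b=c$ by computing $\frac{d}{dt}\det p(t)\big|_{t=1}=s(b-c)^{2}$ and invoking order-$3$ vanishing; your observation that already $\det p(1)=(b-c)^{2}$ is simpler. More substantively, once $M$ is symmetric the paper appeals to Theorem~\ref{thm.hilb2}(3) to get $\rho(M)=6-s$ and then runs a Pythagorean-triple analysis on $(a-e,2b,6-s-a-e)$, whereas you instead factor $\det p(t)$ explicitly and read off $b^{2}=(3-a)(3-e)$ (resp.\ $(2-a)(2-e)$) directly from the third-order vanishing, using Proposition~\ref{prop.loops} to cap the diagonal entries. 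The two routes are equivalent---expanding the paper's spectral-radius equation gives precisely your diophantine relation---but yours avoids the Pythagorean detour and shows more transparently why $a=e$ is forced, at the cost of importing Proposition~\ref{prop.loops}, which the paper reserves for the harder three-vertex case. Both finish the same way, excluding $\begin{psmatrix}0&3\\3&0\end{psmatrix}$ via Theorem~\ref{thm.hilb2}(4). One small caveat: the identity $\hdet(\sigma)=\det(\sigma|_{R_{1}})$ you invoke holds for $\kk[x,y,z]$ but not for arbitrary AS~regular algebras, so in the realizations it is safer (and what the paper does) to verify $\sigma(\omega)=\omega$ directly rather than appeal to that formula.
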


\begin{proof}
Write $M=\begin{pmatrix}a&b\\c&d\end{pmatrix}$.
We have
\[ p(t) = I - Mt + M^T t^{s-1} - I t^s = \begin{pmatrix} 1 - at + at^{s-1} - t^s & -bt + ct^{s-1} \\ -ct + bt^{s-1} & 1 - dt + dt^{s-1} - t^s
\end{pmatrix}.\]
From this it is easy to calculate that $\left.\frac{d}{dt}\det(p(t))\right|_{t=1}=s (b-c)^2$.
By Theorem~\ref{thm.hilb2}(2), $\det(p(t))$ vanishes at $t = 1$ to order at least $3$.  Thus
$b=c$ and so $M$ is symmetric.
The eigenvalues of $M$ are now
\[ e_{\pm}=\frac{1}{2}\left((a+d)\pm \sqrt{(a-d)^2+4b^2}\right).\]
It is clear that $e_+ \geq |e_-|$ and so $e_+=\rho(M)$ is the spectral radius.

Suppose that $s=3$.   Since $M$ is symmetric and hence normal we know that it has spectral radius $\rho(M) = 3$ by
Theorem~\ref{thm.hilb2}(3), and in fact all of the eigenvalues of $M$ lie in the interval $[-1,3]$, by Theorem~\ref{thm.hilb2}(4).
Then $6 = (a+d) + \sqrt{(a-d)^2 + 4b^2}$, and rearranging
we get $(6-a-d)^2 = (a-d)^2 + (2b)^2$.  Thus $(a-d, 2b, 6-a-d)$ is a pythagorean triple, with
$0 \leq 6-a-d \leq 6$.  Since by hypothesis $M$ is the incidence matrix of a connected quiver $Q$, $b \neq 0$.  If $a-d \neq 0$, then we have a triple with three nonzero numbers, the largest of which is at most $6$, which gives only the solutions $(\pm 3, 4, 5)$; this forces $b = 2$, $a-d = \pm 3$, $6-a-d = 5$, which
has no solutions with $a, d \geq 0$.  Thus $a = d$,
$2b = 6-a-d = 6 - 2a$, and so we have  $(a, b) \in \{(0, 3), (1, 2), (2, 1) \}$.    However, $(a, b) = (0,3)$ is not valid,
since in this case the other eigenvalue $e_- = -3$ does not lie in the interval $[-1, 3]$ (one could also just check directly that $\det p(t)$ does not have all of its zeros on the unit circle in this case).

If $s= 4$, then $\rho(M) = 2$ by Theorem~\ref{thm.hilb2}.  Then a similar but easier analysis of pythagorean triples of the form $(a-d, 2b, 4-a-d)$ shows that $a = d$, and the corresponding possible solutions
are $(a, b) \in \{(0, 2), (1, 1) \}$.

The skew group algebras $R \# \kk \mb{Z}_2$ that achieve these types can be found in Table~\ref{table.2vert2}.
Note that a potential $\omega$ of the form $\omega = x^2 y^2 + a xy^2 x + a^2 y^2 x^2 + a^2 b yx^2 y$
with $a^2 b^2 = 1$ is of type $S_2$ in the Artin-Schelter classification \cite[Table 3.6]{AS}.
The corresponding derivation-quotient algebra
\[
A = \kk \langle x, y \rangle/( yx^2 + b x^2 y,  xy^2 + a y^2 x)
\]
is in fact AS regular for all such choices of $a$ and $b$, as can be checked, for example, by a Gr{\"o}bner basis calculation.
 This is relevant also for Table~\ref{table.3ver2} below.
\end{proof}

\begin{table}

\caption[2vert2]{$G = \ZZ_2 = \langle \sigma \rangle$ acting on $R=T(V)/(\partial_a \omega)$,
type of $R \# \kk G = (M, \begin{psmatrix}1 & 0 \\ 0 & 1 \end{psmatrix}, \deg(\omega))$}\label{table.2vert2}
\begin{tabular}{cccccc}\hline
$\sigma$ & $\omega$ & $M$ \\ \hline
$\operatorname{diag}(1, -1, -1)$ & $(xyz +  zxy + yzx) +(- xzy - yxz - zyx)$ & $\left(\begin{smallmatrix}1 & 2 \\ 2 & 1 \end{smallmatrix}\right)$ \\
$\left(\begin{smallmatrix} 0 & 1 & 0 \\ 1 & 0 & 0 \\ 0 & 0 & 1 \end{smallmatrix}\right)$ & $(xyz + zxy + yzx)  +(xzy + yxz + zyx)$ & $\left(\begin{smallmatrix} 2 & 1 \\ 1 & 2 \end{smallmatrix}\right)$ \\
$\operatorname{diag}(1, -1)$ & $y^2x^2 +xy^2x + x^2y^2 + yx^2y$ & $\left(\begin{smallmatrix}1 & 1 \\ 1 & 1 \end{smallmatrix}\right)$  \\
$\operatorname{diag}(-1, -1)$ & $y^2x^2 + xy^2x + x^2y^2 + yx^2y$ & $\left(\begin{smallmatrix}0 & 2 \\ 2 & 0 \end{smallmatrix}\right)$
 \end{tabular}
\end{table}

\section{The Three-vertex twisted case}
\label{sec.3vert1}

In this section we completely classify three-vertex quivers
supporting algebras satisfying Hypothesis \ref{hyp.scy}, where $P \neq I$, in other words where the Nakayama automorphism
gives a nontrivial permutation of the vertices of the quiver $Q$.  The incidence matrices $M$ appearing in this section will not necessarily
be symmetric, but they all turn out to be normal.   As in the two-vertex case, we split our analysis into cases depending on $P$.
Since we classify types only up to relabeling the vertices, it is enough to consider matrices $P$ corresponding to a particular $3$-cycle
and a particular $2$-cycle.

\begin{proposition}
\label{prop.3vert1}
Let $A \iso \kk Q/(\partial_a \omega : a \in Q_1, |Q_0|=3)$ satisfy Hypothesis \ref{hyp.scy} with type
$\left(M, P = \begin{psmatrix}0 & 1 & 0 \\0 & 0 &1 \\1 &0 & 0\end{psmatrix}, s=\deg(\omega)\right)$.
Then $M$ is one of the following:
\begin{align*}
&\deg(\omega)=3:
\begin{psmatrix}
	2 & 1 & 0 \\
	0&  2 & 1 \\
	1 & 0 & 2
\end{psmatrix},
\begin{psmatrix}
	1 & 0 & 2 \\
	2&  1 & 0 \\
	0 & 2 & 1
\end{psmatrix},
\begin{psmatrix}
	0 & 2 & 1 \\
	1&  0 & 2 \\
	2 & 1 & 0
\end{psmatrix},
\begin{psmatrix}
	1 & 1 & 1 \\
	1 & 1 & 1 \\
	1 & 1 & 1
\end{psmatrix}, \\
&\deg(\omega)=4:
\begin{psmatrix}
	0 & 1 & 1 \\
	1 & 0 & 1 \\
	1 & 1 & 0
\end{psmatrix},
\begin{psmatrix}
	1 & 1 & 0 \\
	0 & 1 & 1 \\
	1 & 0 & 1
\end{psmatrix},
\begin{psmatrix}
	1 & 0 & 1 \\
	1 & 1 & 0 \\
	0 & 1 & 1
\end{psmatrix},
\begin{psmatrix}
	0 & 2 & 0 \\
	0 & 0 & 2 \\
	2 & 0 & 0
\end{psmatrix}.
\end{align*}
Every listed type occurs for an algebra $A$ of the form $R \# \kk \mb{Z}_3$, where $R$ is an Artin-Schelter regular
algebra of dimension $3$.
\end{proposition}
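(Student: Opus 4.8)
\emph{Stage 1 --- the necessary conditions.} Let $\zeta$ denote a primitive cube root of unity. By Theorem~\ref{thm.hilb2}(1), $M$ commutes with $P$, so by Proposition~\ref{prop.perms}(1) the matrix $M$ is circulant, say $M=\operatorname{circ}(a,b,c)=aI+bP+cP^2$ with $a,b,c\in\ZZ_{\ge 0}$. In particular $M$ is normal, so by Theorem~\ref{thm.hilb2}(3) we have $\rho(M)=6-d$; since the eigenvalues of $M$ are $a+b\zeta^k+c\zeta^{2k}$ for $k=0,1,2$, each of modulus at most $a+b+c$ with equality at $k=0$, this forces $a+b+c=6-d$. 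Connectedness of $Q$ rules out $b=c=0$. This leaves the nine triples with $a+b+c=3$, $(b,c)\ne(0,0)$ when $d=3$ and the five triples with $a+b+c=2$, $(b,c)\ne(0,0)$ when $d=4$. To pare the list down, note that $M$ and $P$ are simultaneously diagonalizable, so on the common eigenline on which $P$ has eigenvalue $\xi\in\{1,\zeta,\zeta^2\}$ and $M$ has eigenvalue $\lambda=a+b\xi+c\xi^2$ (and hence $M^T$ has eigenvalue $\overline\lambda$, using that $M$ is circulant), the polynomial $p(t)$ of~\eqref{def.mpoly} acts by the scalar $p_\xi(t)=1-\lambda t+\xi\overline\lambda\,t^{d-1}-\xi t^d$, and $\det p(t)=\prod_\xi p_\xi(t)$. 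Here $p_1(t)=(1-t)^3$ when $d=3$ and $(1-t)^3(1+t)$ when $d=4$, while $q(t):=p_\zeta(t)\,p_{\zeta^2}(t)$ is a real palindromic polynomial of degree $2d$; by Theorem~\ref{thm.hilb2}(2) we require every root of $q$ to lie on the unit circle, which one checks candidate by candidate by substituting $w=t+t^{-1}$ to reduce $q$ to a polynomial of degree $d$ in $w$ and asking whether all of its roots are real and lie in $[-2,2]$ (equivalently, whether $q$ is a product of cyclotomic polynomials, of which only finitely many can occur in a given degree). Carrying this out leaves exactly the eight matrices in the statement; note that $p(t)$, hence $\det p(t)$, is not symmetric under $M\mapsto M^T$ because of the $P$-factors, which is why for a transpose pair such as $\operatorname{circ}(2,1,0)$, $\operatorname{circ}(2,0,1)$ only one member survives. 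The nonnegativity of $h_A(t)=p(t)^{-1}$ required by Theorem~\ref{thm.hilb2}(1) is automatic once a type is realized.

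\emph{Stage 2 --- realizations.} For each surviving type I would exhibit, in a table modeled on Table~\ref{table.2vert1}, a $3$-dimensional AS regular algebra $R=T(V)/(\partial_a\omega)$ and an order-$3$ automorphism $\sigma$ of $R$. By Section~\ref{sec.motive}, $A=R\#\kk\ZZ_3\iso\kk Q/I$, where $Q$ is the McKay quiver --- whose incidence matrix is the circulant recording the multiplicities modulo $3$ of the eigenvalue-exponents of $\sigma|_V$ --- and the Nakayama automorphism of $A$ permutes the vertices through the winding automorphism attached to $\hdet(\sigma)$, the scalar by which the lift of $\sigma$ scales $\omega$. For most types this is routine: the three $d=3$ matrices other than the all-ones matrix come from $R=\kk[x,y,z]$ with a diagonal $\sigma=\operatorname{diag}(\zeta^p,\zeta^q,\zeta^r)$, where $(p,q,r)$ is chosen so the residue multiplicities give the right circulant and $p+q+r\not\equiv 0\pmod 3$, which makes $\hdet(\sigma)=\zeta^{p+q+r}\ne1$ and forces the $3$-cycle $P$; the $d=4$ matrices $\operatorname{circ}(0,2,0)$, $\operatorname{circ}(1,1,0)$, $\operatorname{circ}(1,0,1)$ come from the $2$-generator cubic-relation AS regular algebra $\kk\langle x,y\rangle/(yx^2+bx^2y,\,xy^2+ay^2x)$ (whose degree-$4$ superpotential uses $x$ twice in every monomial) with $\sigma$ a scalar or of the form $\operatorname{diag}(1,\zeta^i)$.

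I expect the delicate step to be the two ``balanced'' types $\operatorname{circ}(1,1,1)$ with $d=3$ and $\operatorname{circ}(0,1,1)$ with $d=4$. There the eigenvalue-exponents of $\sigma|_V$ are forced to be $\{0,1,2\}$, respectively $\{1,2\}$, and for the commutative polynomial ring (respectively for the type-$S_2$ superpotential above) one gets $\hdet(\sigma)=\det(\sigma|_V)=\zeta^{0+1+2}=1$, so $P$ would be trivial. One must instead produce a genuinely noncommutative $3$-dimensional AS regular algebra of the appropriate shape --- three generators and quadratic relations in the first case, two generators and cubic relations in the second --- whose superpotential $\omega$ is a $\sigma$-eigenvector with a nontrivial eigenvalue (equivalently, writing $\sigma(v_i)=\zeta^{p_i}v_i$ in an eigenbasis, every monomial $v_{i_1}\cdots v_{i_s}$ of $\omega$ has the same value of $p_{i_1}+\cdots+p_{i_s}$ modulo $3$), and then verify both that $R$ is AS regular and that the McKay quiver and vertex permutation of $A=R\#\kk\ZZ_3$ are as claimed. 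Alternatively, one or both of these types can be obtained from an Ore extension of a dimension-$2$ twisted Calabi--Yau algebra, via the construction in Section~\ref{sec.motive}. Once all eight rows of the table are verified, the proposition follows.
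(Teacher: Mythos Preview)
Your approach is essentially the same as the paper's: simultaneous diagonalization of the commuting normal matrices $M,P$ to factor $\det p(t)$ as a product of scalar polynomials and test each candidate triple $(a,b,c)$ with $a+b+c=6-d$, followed by realizations via $\mathbb{Z}_3$-skew group algebras over AS regular algebras. The paper organizes Stage~1 slightly differently---it observes the cyclic invariance $r_{a,b,c}(t)\leftrightarrow r_{c,a,b}(u)$ under $t=\zeta u$ to group the candidates, and simply checks by hand (or computer) that the offending polynomials have roots off the unit circle---but your palindromic/$w=t+t^{-1}$ reduction is a valid alternative for the same finite check.

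For Stage~2 you have correctly identified the crux. The paper resolves the two ``balanced'' types by producing explicit AS regular algebras of type~$E$ in the Artin--Schelter classification: for $\operatorname{circ}(1,1,1)$, $d=3$ it uses a three-generator quadratic algebra whose superpotential $\omega(E)$ involves a primitive ninth root of unity and is scaled nontrivially by $\sigma=\operatorname{diag}(1,\zeta,\zeta^2)$; for $\operatorname{circ}(0,1,1)$, $d=4$ it uses the two-generator cubic algebra with superpotential $\omega'(E)=y^3x+\zeta y^2xy+\zeta^2 yxy^2+xy^3+x^4$, on which $\sigma=\operatorname{diag}(\zeta^2,\zeta)$ has $\hdet=\zeta^2$. (The paper also uses $\omega'(E)$ rather than the $S_2$ potential for $\operatorname{circ}(1,1,0)$, though your $S_2$ choice with $\sigma=\operatorname{diag}(\zeta^2,1)$ works there too.) One caution: your Ore-extension alternative would exhibit twisted Calabi--Yau algebras of those types but would \emph{not} establish the final sentence of the proposition, which asserts that each type is realized specifically by an algebra of the form $R\#\kk\mathbb{Z}_3$; so for the statement as written you must go the superpotential route.
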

\begin{proof}
We want to know the possible $M$ for which the matrix polynomial $p(t) = I - Mt + P M^Tt^{s-1} - P t^s$
has all roots on the unit circle.
The condition $PM = M P$ implies that $M$ is a circulant matrix by Proposition~\ref{prop.perms}, in other words
$M= \begin{psmatrix}a & b & c \\ c & a & b \\ b & c & a\end{psmatrix}$ for some $a, b, c \geq 0$.  In particular,
$M$ is normal, so the spectral radius of $M$ is $\rho(M) = 6-s$,  by Theorem~\ref{thm.hilb2}(3).  The spectral radius of $M$ is bounded below by the minimal row sum and above by the maximal row sum \cite[Theorem 8.1.22]{HJ}. Since all rows of $M$ have sum $a + b+ c$, the spectral radius of $M$ is $\rho(M) = a + b+ c$.

Suppose that $s = 3$, so $a+b+c = 3$.  The matrices $M$ and $P$ are commuting normal matrices and so are simultaneously diagonalizable.  Let $U = \begin{psmatrix} 1 & 1 & 1 \\ 1 & \zeta & \zeta^2 \\ 1 & \zeta^2 & \zeta \end{psmatrix}$, where $\zeta = e^{2\pi i/3}$.  Then $U^{-1}MU = D$, $U^{-1} P U  = Z$ where $D = \operatorname{diag}(3, \delta, \overline{\delta})$ and $Z = \operatorname{diag}(1, \zeta, \zeta^2)$, with $\delta =  a + b \zeta + c \zeta^2$.
We also have $U^{-1}M^T U = \operatorname{diag}(3, \overline{\delta}, \delta)$ since transposing $M$ switches the roles of $b$ and $c$.
Thus
\[
U^{-1} p(t) U = \operatorname{diag}(1 - 3t + 3t^2 - t^3, 1 - \delta t + \zeta \overline{\delta} t^2 - \zeta t^3, 1 - \overline{\delta} t + \overline{\zeta} \delta t^2 - \overline{\zeta}t^3).
\]

By Theorem~\ref{thm.hilb2}(2), $\det p(t) = \det U^{-1} p(t) U$ must have only roots of unity for zeros.
Thus the three polynomials in the diagonal of $U^{-1} p(t) U$ must have only roots of unity for zeros.
This is automatic for $1 - 3t + 3t^2 - t^3 = (1-t)^3$.  The other two polynomials are conjugate and so it is necessary and sufficient that $r_{a,b,c}(t) = 1 - \delta t + \zeta \overline{\delta} t^2 - \zeta t^3$ have only roots of unity for its zeros.  The
change of variable $t = \zeta u$ leads to $1 - \delta \zeta u + \overline{\delta} u^2 - \zeta u^3 = r_{c, a, b}(u)$,
and this has roots of unity for zeros if and only if $r_{a, b, c}(t)$ does.  Thus this condition is invariant under
cyclic permutation of $a, b, c$.

Now since $a+ b+ c = 3$, the set of possibilities is
\begin{align*}
(a, b, c) \in \{(3, 0, 0), (0, 3, 0), (0, 0 , 3), (1, 1, 1), (2, 1, 0), \\ (1, 2, 0), (2, 0, 1), (1, 0, 2), (0, 1, 2), (0, 2, 1) \}.
\end{align*}

When $(a, b, c) = (3, 0, 0)$ then $r_{a, b, c}(t) = 1 - 3t + 3\zeta t^2 - \zeta t^3$, which does not
have all of its roots on the unit circle, as is easy to check by computer.  Thus $(0, 3, 0)$ and $(0,0, 3)$ are also bad.

When $(a, b,c) = (2, 0, 1)$ then $r_{a,b,c}(t) = 1 - (2 + \zeta^2)t  + (2\zeta + \zeta^2)t^2 - \zeta t^3$,
which again does not have all of its roots on the unit circle.
Thus $(0, 1, 2)$ and $(1, 2, 0)$ are also bad.
The remaining matrices $M$ are the ones listed in the statement.

A similar analysis applies to $s = 4$, for which we must have $\rho(M) = a+b+c = 2$.  The list of possibilities is
\[ (a, b, c) \in \{(2, 0, 0), (0, 2, 0), (0, 0 , 2), (1, 1, 0), (1,0, 1), (0, 1, 1) \}.\]
In this case, with the same $U$ and $\delta$ as above we have
\[
U^{-1} p(t) U = \operatorname{diag}(1 - 2t +2t^3 - t^4, 1 - \delta t + \zeta \overline{\delta} t^3 - \zeta t^4, 1 - \overline{\delta} t + \overline{\zeta} \delta t^3 - \overline{\zeta}t^4).
\]
Again, $1 - 2t +2t^3 - t^4 = (1-t)^3(1+t)$ has all roots of unity for zeros, and the other two polynomials are
$s_{a, b, c}(t) = 1 - \delta t + \zeta \overline{\delta} t^3 - \zeta t^4$ and its conjugate, so we just need to investigate
 $s_{a,b,c}(t)$.  In this case, setting $t = \zeta u$, then $s_{a,b,c}(t) = \overline{s_{c, b, a}(u)}$, so $(a, b, c)$ is valid if and only if $(c, b, a)$ is.

When $(a, b, c) = (2, 0, 0)$, then one may check that
$s_{2,0,0} = 1 -2t + 2\zeta t^3 - \zeta t^4$ does not have roots of unity
for roots.  Thus $(0, 0, 2)$ is also bad.
Again, the remaining four possibilities are the ones listed.

We have narrowed the list down to four possible incidence matrices for $s = 3$ and four also for $s = 4$.   The
examples in Table~\ref{table.3vert1} show that they all occur as skew group algebras of $\mb{Z}_3$-actions on Artin-Schelter regular algebras, as claimed.   Note that the vertices of the McKay quiver are labeled by the irreducible characters $\chi_1, \chi_2, \chi_3$ as given
in the appendix, in that order.  The superpotential $\omega(E)$ is of type $E$ in the Artin Schelter
classification in \cite[Table 3.11]{AS}, and the corresponding derivation-quotient algebra is regular by \cite[Theorem 3.10]{AS}, since this potential is already generic in its family.  Similarly, $\omega'(E)$ is of type $E$
in \cite[Table 3.9]{AS} and the corresponding algebra is regular for the same reason.
\end{proof}

\begin{table}
\caption[3vert1]{$G = \ZZ_3 = \langle \sigma \rangle$ acting on acting on $R=T(V)/(\partial_a \omega)$, type of $R \# \kk G = \left(M, \begin{psmatrix}0 & 1 & 0 \\ 0 & 0 &1 \\ 1 &0 & 0 \end{psmatrix}, \deg(\omega)\right)$.  Let $\zeta$ be a primitive 3rd root of unity.   Let $\theta$ be a primitive 9th root of unity and let $\omega(E) =$ \\
$xzx + \theta^8 x^2z + \theta zx^2   + yxy + \theta^5 y^2 x + \theta^4 xy^2
+ zyz +  \theta^2 z^2 y + \theta^7 yz^2$.
\\
Let $\omega'(E)= y^3x + \zeta y^2xy + \zeta^2 yxy^2 + xy^3 + x^4$.
}\label{table.3vert1}
\begin{tabular}{cccc}\hline
$\sigma$ & $\omega$ & $M$ \\ \hline
$\operatorname{diag}(1, 1, \zeta^2)$ & $(xyz +  zxy + yzx) +(- xzy - yxz - zyx)$ & $\left(\begin{smallmatrix}2  & 1 & 0 \\ 0 & 2 & 1 \\ 1 & 0 & 2 \end{smallmatrix}\right)$  \\
 $\operatorname{diag}(1, \zeta, \zeta)$ & $(xyz +  zxy + yzx) +(- xzy - yxz - zyx)$ & $\left(\begin{smallmatrix} 1  & 0 & 2 \\2 & 1 & 0 \\ 0 & 2 & 1 \end{smallmatrix}\right)$ \\
$\operatorname{diag}(\zeta, \zeta^2, \zeta^2)$ & $(xyz +  zxy + yzx) +(- xzy - yxz - zyx)$ & $\left(\begin{smallmatrix}0  & 2 & 1 \\ 1 & 0 & 2 \\ 2 & 1 & 0 \end{smallmatrix}\right)$ \\
$\operatorname{diag}(1, \zeta, \zeta^2)$ & $\omega(E)$ & $\left(\begin{smallmatrix}1  & 1 & 1 \\ 1 & 1 & 1 \\ 1 & 1 & 1 \end{smallmatrix}\right)$ \\
$\operatorname{diag}(\zeta^2, \zeta)$ & $\omega'(E)$  & $\left(\begin{smallmatrix} 0   & 1 & 1 \\ 1 & 0 & 1 \\ 1  &1 & 0 \end{smallmatrix}\right)$ \\
$\operatorname{diag}(\zeta^2, 1)$ & $\omega'(E)$  & $\left(\begin{smallmatrix}1   & 1 & 0 \\ 0 & 1 & 1 \\ 1 & 0 & 1 \end{smallmatrix}\right)$ \\
$\operatorname{diag}(1, \zeta)$ & $x^2y^2 + xy^2x + y^2 x^2 + yx^2y$ & $\left(\begin{smallmatrix}1   & 0 & 1 \\ 1 & 1 & 0 \\ 0 & 1 & 1 \end{smallmatrix}\right)$ \\
$\operatorname{diag}(\zeta^2, \zeta^2)$ & $x^2y^2 + xy^2x + y^2 x^2 + yx^2y$ & $\left(\begin{smallmatrix}0   &2 & 0 \\ 0 & 0 & 2 \\ 2 & 0 & 0 \end{smallmatrix}\right)$
\end{tabular}
\end{table}

\bigskip

\begin{proposition}
\label{prop.3vert2}
Let $A \iso \kk Q/(\partial_a \omega : a \in Q_1, |Q_0|=3)$ satisfy Hypothesis \ref{hyp.scy} with type
$\left(M, P = \begin{psmatrix} 1 & 0 & 0 \\ 0 &0 & 1 \\ 0 & 1 & 0 \end{psmatrix}, s=\deg(\omega)\right)$.
Then $M$ is one of the following:
\begin{align*}
&\deg(\omega)=3:
\begin{psmatrix}
	2 & 1 & 1 \\
	1 & 1 & 0 \\
	1 & 0 & 1
\end{psmatrix},
\begin{psmatrix}
	2 & 1 & 1 \\
	1 & 0 & 1 \\
	1 & 1 & 0
\end{psmatrix},
\begin{psmatrix}
	1 & 1 & 1 \\
	1 & 0 & 2 \\
	1 & 2 & 0
\end{psmatrix}
\begin{psmatrix}
	1 & 1 & 1 \\
	1 & 1 & 1 \\
	1 & 1 & 1
\end{psmatrix}, \\
&\deg(\omega)=4:
\begin{psmatrix}
	1 & 1 & 1 \\
	1 & 0 & 0 \\
	1 & 0 & 0
\end{psmatrix},
\begin{psmatrix}
	0 & 1 & 1 \\
	1 & 1 & 0 \\
	1 & 0 & 1
\end{psmatrix}^*,
\begin{psmatrix}
	0 & 1 & 1 \\
	1 & 0 & 1 \\
	1 & 1 & 0
\end{psmatrix}^*.
\end{align*}
All of the listed types occur, except possibly the last two starred types.
\end{proposition}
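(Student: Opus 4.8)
The plan is to run the argument of Proposition~\ref{prop.3vert1} again, the new feature being that the permutation of $\{1,2,3\}$ attached to $P$ is the transposition $(2\,3)$, which fixes vertex $1$ and so has a two-dimensional fixed space; consequently $M$ is no longer circulant, and showing that it is nonetheless normal is part of the work. By Proposition~\ref{prop.perms}(2), commutativity of $M$ and $P$ forces $M=\begin{psmatrix}a&b&b\\c&e&f\\c&f&e\end{psmatrix}$ with $a,b,c,e,f\in\mb{Z}_{\geq0}$. I would then block-diagonalize $p(t)$ using the eigenbasis $(1,0,0),(0,1,1),(0,1,-1)$ of $P$ (the first two spanning its $+1$-eigenplane, the last its $-1$-eigenline): conjugating $p(t)$ by the corresponding orthogonal matrix and using $MP=PM$ (Theorem~\ref{thm.hilb2}), one finds that $p(t)$ becomes block diagonal, with a $2\times2$ block $B(t)=I_2-N_1t+N_2t^{s-1}-I_2t^s$ where $N_1=\begin{psmatrix}a&2b\\c&e+f\end{psmatrix}$, $N_2=\begin{psmatrix}a&2c\\b&e+f\end{psmatrix}$, and a scalar block $g(t)=1-(e-f)(t+t^{s-1})+t^s$. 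Thus $\det p(t)=\det B(t)\cdot g(t)$, and by Theorem~\ref{thm.hilb2}(2) each factor has all its zeros roots of unity, while $\det p(t)$ vanishes at $t=1$ to order at least $3$.

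The key step is to deduce $b=c$. Since $\det B(1)=\det(N_2-N_1)=2(b-c)^2$, while a short computation shows that $g$ vanishes at $t=1$ to order $0$ if $e-f\neq1$ and to order exactly $2$ if $e-f=1$ --- never to order $3$ --- the factor $\det B$ must vanish at $t=1$, forcing $b=c$. Then $M$ is symmetric, hence normal, so Theorem~\ref{thm.hilb2}(3) gives $s\in\{3,4\}$ and $\rho(M)=6-s$. With $b=c$ one has $N_1=N_2$, with eigenvalues $\lambda_{\pm}=\tfrac12\big((a+e+f)\pm\sqrt{(a-e-f)^2+8b^2}\big)$, and $e-f$ is the third eigenvalue of $M$; one checks $\lambda_+=\rho(M)$. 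Setting $\lambda_+=6-s$ and using $\lambda_++\lambda_-=a+e+f$, $\lambda_+\lambda_-=a(e+f)-2b^2$ gives $2b^2=\big((6-s)-a\big)\big((6-s)-(e+f)\big)$; since $|\lambda_-|\leq\rho(M)=6-s$ forces $a+(e+f)\leq2(6-s)$, the two factors on the right are nonnegative, whence $a\leq6-s$ and $e+f\leq6-s$. Combined with the requirements that $e,f$ be nonnegative integers, that $Q$ be connected, and that $g(t)$ and $\det B(t)=h_{\lambda_+}(t)h_{\lambda_-}(t)$ (where $h_\lambda(t)=1-\lambda t+\lambda t^{s-1}-t^s$) have only roots of unity for zeros, this becomes a small finite check, which should return exactly the seven listed matrices. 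The root-of-unity condition on $g$ is indispensable here: it rules out $e-f=2$, and so eliminates $\begin{psmatrix}1&1&1\\1&2&0\\1&0&2\end{psmatrix}$, which otherwise has spectral radius $3$.

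For the ``moreover'' part I would exhibit an explicit example of each non-starred type, gathered in a table as in Lemmas~\ref{lem.2vert1},~\ref{lem.2vert2} and Proposition~\ref{prop.3vert1}; the verifications (that the superpotential is good, the value of the homological determinant, and the resulting permutation of the vertices) are routine and parallel to those cases. The $s=3$ types other than the all-ones matrix are realized by skew group algebras $B\#\kk S_3$ for quadratic AS regular $B$ of dimension $3$ with a suitable three-dimensional $S_3$-action on $B_1$, and the all-ones type is realized instead as an Ore extension of the dimension-$2$ twisted Calabi--Yau algebra of Example~\ref{ex.component} by its order-$2$ automorphism $\rho$; by the Ore-extension formula of Section~\ref{sec.motive} this has type precisely $\big(\begin{psmatrix}1&1&1\\1&1&1\\1&1&1\end{psmatrix},P,3\big)$. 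The $s=4$ type comes from $B\#\kk S_3$ with $B$ cubic AS regular of dimension $3$ and $B_1$ the standard $2$-dimensional representation. For the two starred types one checks directly that every condition coming from Theorem~\ref{thm.hilb2} is satisfied, but no algebra of either type is known.

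The main obstacle is not the elimination of candidates --- once $b=c$ is in hand the classification is a short finite search --- but the realization of the surviving non-starred types: identifying the AS regular algebras that carry the required $S_3$-actions with the correct homological determinant, and recognizing that the all-ones type with this $P$ has to be built from an Ore extension rather than a skew group algebra.
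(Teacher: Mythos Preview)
Your elimination argument is correct and is in fact a bit cleaner than the paper's. The paper expands $\det p(t)$ directly, obtains $\det p(1)=4(b-c)^2(1-e+f)$, and must then separately dispose of the case $e-f=1$ before concluding $b=c$; your block-diagonalisation along the eigenspaces of $P$ makes this automatic, since the scalar factor $g(t)=1-(e-f)(t+t^{s-1})+t^s$ visibly vanishes at $t=1$ to order at most $2$, so the remaining factor $\det B(t)$ must vanish there and $\det B(1)=2(b-c)^2$ forces $b=c$. Likewise, your use of the root-of-unity constraint on $g$ to exclude $e-f=2$ replaces what in the paper is a bare ``computer check'' ruling out $\begin{psmatrix}1&1&1\\1&2&0\\1&0&2\end{psmatrix}$. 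The one point you elide is why $\lambda_+=\rho(M)$ rather than $|e-f|$; once $M$ is symmetric and $Q$ connected this is immediate from Perron--Frobenius, since the eigenvector $(0,1,-1)$ for $e-f$ is not positive, but the paper does give an explicit argument here.

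There is, however, a genuine error in your realisation step. You assert that all the $s=3$ types other than the all-ones matrix arise as $B\#\kk S_3$, but $\begin{psmatrix}1&1&1\\1&0&2\\1&2&0\end{psmatrix}$ does not: running through the $S_3$ McKay quivers for three-dimensional representations (listed in the appendix), none has this incidence matrix, and there is no way to obtain it as a component either. In the paper this type, like the all-ones type, is produced by an Ore extension of a dimension-$2$ twisted Calabi--Yau algebra. Specifically, $B=\kk[x,y]\#\kk\mb{Z}_3$ with $\mb{Z}_3$ acting via $\operatorname{diag}(\zeta,\zeta^2)$ has underlying quiver with incidence matrix $M'=\begin{psmatrix}0&1&1\\1&0&1\\1&1&0\end{psmatrix}$ and trivial Nakayama permutation; the involution of $\kk\mb{Z}_3$ swapping $\sigma\leftrightarrow\sigma^2$ induces an automorphism $\sigma$ of $B$ interchanging vertices $2$ and $3$, and then $B[t;\sigma]$ has type $(M'+P,\,P,\,3)$ by the Ore-extension formula of Section~\ref{sec.motive}. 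So two of the four $s=3$ types, not one, require the Ore-extension construction.
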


\begin{proof}
The commutation $MP=PM$ forces $M$ to have the form
\[ M_Q = \begin{psmatrix}a & b & b \\ b' & c & d \\b' & d & c\end{psmatrix}\]
for some $a, b, b', c, d$, by Proposition~\ref{prop.perms}. Then
\[
p(t) = I - Mt + PM^T t^{s-1} - P t^s = \begin{psmatrix} 1 - at + at^{s-1} - t^s & -bt + b' t^{s-1} & -bt + b' t^{s-1} \\ -b't + bt^{s-1} & 1 -ct  + dt^{s-1} & -dt + ct^{s-1} - t^s \\ -b't + b t^{s-1} & -dt + ct^{s-1} - t^s & 1 - ct + dt^{s-1} \end{psmatrix}.
\]
So
\begin{gather*}
\det p(t) = 2(-bt + b't^{s-1})(-b't + bt^{s-1})(-1 + (c-d)t + (c-d)t^{s-1} - t^s) \\
+ (1-at + at^{s-1} -t^s)((1-ct + dt^{s-1})^2 - (-dt + ct^{s-1} - t^s)^2).
\end{gather*}
By Theorem~\ref{thm.hilb2}(2),
since $\GKdim(A) = 3$ by hypothesis, $1$ is a root of $\det p(t)$ of multiplicity at least $3$.
We have $\left.\det(p(t))\right|_{t=1} = 4(b'-b)^2(1-c+d)$, so either $b' = b$ or $1 - c + d = 0$.  If $1 -c + d = 0$, then the second summand in the formula for $\det p(t)$ already vanishes to order at least $3$ at $t = 1$, so the first summand
\[2(-bt + b't^{s-1})(-b't + bt^{s-1})(-1 + (c-d)t + (c-d)t^{s-1} - t^s)\] must also.
Since $s=3$ or $s = 4$, we have
\[\left.\frac{d^2}{dt^2} (-1 + (c-d)t + (c-d)t^{s-1} - t^s) \right|_{t=1} = (s-1)(s-2)(c-d) - s(s-1) \neq 0,\]
so either $(-bt+b't^{s-1})$ or $(-b't + bt^{s-1})$ vanishes at $t = 1$.  Thus $b = b'$ in this case as well.

So in any case we have $b = b'$.  Since $M$ is the incidence matrix of a connected quiver $Q$, $b \neq 0$.  The matrix $M$ is now
symmetric with eigenvalues
\[
e_{\pm} = \frac{1}{2}\left[(a + c + d)
\pm \sqrt{(a-c-d)^2 + 8b^2}\right], \;\;\;
e_0 = c - d.
\]
In particular, $M$ is normal, and so by Theorem~\ref{thm.hilb2}(3) its spectral radius must be
$\rho(M) = 6 - s$.  Obviously $e_{+} \geq |e_{-}|$ so either $e_{+}$ or $e_0$ is the spectral radius.

Suppose first that $s=3$, so the spectral radius of $M$ is $3$.  If $e_{0}$ is the spectral radius, then $c-d = 3$.  Since
$d \geq 0$, we have $c \geq 3$.  If also $a > 0$ or $d > 0$, then since $b > 0$ we get
$e_{+} > \frac{1}{2}( 4 + 2\sqrt{2}) > 3$, contradicting that $e_0$ is the spectral radius.  So $d = a  = 0$.  Then $c = 3$ and $(a-c-d) = -3$,
so that
$e_{+} > \frac{1}{2}(3 + \sqrt{17}) > 3$, again a contradiction.
Thus $e_{+} = 3$ is the spectral radius.  We see now that $0 \leq 6 - a - c - d \leq 6$ and
that $(6 - a -c - d)^2 = (a-c-d)^2 + 8b^2$, so that $(a-c-d, 2b, 6-a-c-d)$ is an integer solution to
$x^2 + 2y^2 = z^2$ with $0 \leq z \leq 6$ and $y > 0$.  The only such triples $(x, y, z)$ are $(\pm 1, 2, 3)$, and
$(\pm 2, 4, 6)$.  The triples $(\pm 2, 4, 6)$ force $a + c + d = 0$ and so $a =c = d = 0$, but then $a-c-d \neq \pm 2$, a contradiction.  If we have the solution $(1, 2, 3)$, then $a-c-d = 1, b = 1, 6-a-c-d = 3$, which gives
$a= 2, b= 1, c+d = 1$.  This leads to the first two matrices on the list.
The solution $(-1, 2, 3)$ gives $a=1, b=1, c+d = 2$, which leads
to three matrices.   When $a = 1, b = 1, c= 2, d = 0$, a simple computer check shows that $p(t)$ does not have all of
its zeros on the unit circle.  (This can also be easily shown by diagonalizing the matrix polynomial, similarly as in the preceding result.)
The other two cases $c = d = 1$ and $c = 0, d = 2$ give the rest of the matrices on the list for $s = 3$.

Now suppose that $s = 4$, so the spectral radius is $2$.  If $e_0$ is the spectral radius, then $c-d = 2$, so
$c \geq 2$ and $b > 0$ implies that $e_{+} \geq \frac{1}{2}(2 + 2 \sqrt{2}) > 2$, a contradiction.
So $e_{+} = 2$ is the spectral radius, and $(a-c-d, 2b, 4-a-c-d)$ is an integer solution to
$x^2 + 2y^2 = z^2$ with $0 \leq z \leq 4$ and $y > 0$.  Now we only have the solutions
$(\pm 1, 2, 3)$ to worry about.  The solution $(1, 2, 3)$ leads to the possibility $a = 1, b=1, c=d= 0$. The solution $(-1, 2, 3)$ leads to $a =0, b = 1, c+d = 1$, which gives two more matrices with $c=1, d= 0$ or $c = 0, d = 1$.  These are the three matrices on the list for $s = 4$.

This time only some of the types---the first two for $s = 3$ and the first one for $s = 4$---can be realized directly as the types of algebras Morita equivalent to skew group algebras of the form $R \# \kk G$ for Artin-Schelter regular algebras $R$.  When this works, $G = S_3$ is the symmetric group.  See Table~\ref{table.3ver2}.

We find examples of the other types with $s = 3$ using Ore extensions, as follows.
Recall that $P = \begin{psmatrix} 1 & 0 & 0 \\ 0 &0 & 1 \\ 0 & 1 & 0 \end{psmatrix}$ is the permutation matrix of the action
of the Nakayama automorphism on the vertices that we are looking for.  Let $\zeta$ be a primitive third root of unity.

In Example~\ref{ex.component}, we gave a group $G$ acting on a 2-dimensional vector space
such that the McKay quiver breaks up into connected components,
one of which has the incidence matrix $M' = \begin{psmatrix} 0 & 1 & 1 \\ 1 & 1 & 0 \\ 1 & 0 & 1 \end{psmatrix}$.
As seen in that example, $\kk[x, y] \# \kk G$ is twisted Calabi-Yau and breaks up as a product of algebras,
one of which is Morita equivalent to a twisted Calabi-Yau algebra $B$ of dimension $2$, where $B \cong \kk Q/I$
with $Q$ of incidence matrix $M'$, and where the Nakayama automorphism of $B$ acts trivially on the vertices of $Q$.
We also noted there that $B$ has an automorphism $\sigma$ that permutes the two vertices with loops,
so $\sigma$ acts on vertices via the matrix $P$.
By the discussion of Ore extensions in Section~\ref{sec.motive},
the algebra $A = B[t; \sigma]$ is twisted Calabi-Yau of dimension $3$ of type $(M = M' + P, P = PI, 3)$, where here
$M = \begin{psmatrix} 1 & 1 & 1 \\ 1 & 1 & 1 \\ 1 & 1 & 1 \end{psmatrix}$.

Similarly, there is a twisted Calabi-Yau algebra $B = \kk Q/I$ of dimension $2$, where $Q$ has incidence matrix
$M' = \begin{psmatrix} 0 & 1 & 1 \\ 1 & 0 & 1 \\ 1 & 1 & 0 \end{psmatrix}$,
namely $B = \kk[x, y] \# \kk \mb{Z}_3$ where $\mb{Z}_3 = \langle \sigma \rangle$ acts via $\sigma(x) = \zeta x$, $\sigma(y) = \zeta^2 y$
for a primitive third root of unity $\zeta$.  Again this action has trivial $\hdet$ and so the Nakayama automorphism of $B$ acts trivially on the vertices of $Q$.  Now $B$ has an automorphism $\sigma$ that acts on the vertices via $P$, which is induced from the automorphism
of $k \mb{Z}_3$ that interchanges $\sigma$ and $\sigma^2$.
So $A = B[t; \sigma]$ is twisted Calabi-Yau of dimension $3$ of type $(M = M' + P, P = PI, 3)$ where
$M = \begin{psmatrix} 1 & 1 & 1 \\ 1 & 0 & 2 \\ 1 & 2 & 0 \end{psmatrix}$.

Now all the listed types except the final two starred ones have been attained.
\end{proof}

\begin{table}
\caption[3ver2]{$G = S_3 = \langle \sigma,\tau \rangle$ acting on acting on $R=T(V)/(\partial_a \omega)$, where $\sigma = (123)$ and $\tau = (12)$, type of $R \# \kk G = \left(M,\begin{psmatrix}1& 0 & 0 \\ 0 & 0 & 1 \\ 0 &  1 & 0 \end{psmatrix},\deg(\omega)\right)$. }\label{table.3ver2}
\begin{tabular}{cccc}\hline
$\sigma$ & $\tau$ & $\omega$ & $M$ \\ \hline
$\left(\begin{smallmatrix}0  & 0 & 1 \\ 1 & 0 & 0 \\ 0 & 1 & 0 \end{smallmatrix}\right)$ &
$\left(\begin{smallmatrix}0 & 1 &  0 \\1 & 0 & 0 \\ 0 & 0 & 1 \end{smallmatrix}\right)$ &
$(xyz +  zxy + yzx) +(- xzy - yxz - zyx)$ &
$\left(\begin{smallmatrix}2 &1 &  1 \\1 & 1 & 0 \\ 1 & 0 & 1 \end{smallmatrix}\right)$ \\
$\left(\begin{smallmatrix}0  & 0 & 1 \\ 1 & 0 & 0 \\ 0 & 1 & 0 \end{smallmatrix}\right)$ &
$\left(\begin{smallmatrix}0 & -1 &  0 \\-1 & 0 & 0 \\ 0 & 0 & -1 \end{smallmatrix}\right)$ &
$(xyz +  zxy + yzx) +(xzy+ yxz + zyx)$ &
$\left(\begin{smallmatrix}2 & 1  &  1 \\1 & 0 & 1 \\ 1 & 1 & 0 \end{smallmatrix}\right)$ \\
$\operatorname{diag}(\zeta, \zeta^2)$ &
$\left(\begin{smallmatrix}0 & 1 \\1 & 0 \end{smallmatrix}\right)$ & $x^2y^2 + i xy^2x - y^2x^2 -i yx^2y$ &
$\left(\begin{smallmatrix}1 &1  &  1 \\1 & 0 & 0 \\ 1 & 0 & 0 \end{smallmatrix}\right)$
\end{tabular}
\end{table}

\noindent
It seems likely that similar methods as those used in this section can be used to classify the types of twisted Calabi-Yau algebras
on more than three vertices, when the matrix $P$ is a large cycle or a product of large cycles, but we do not attempt to analyze more than three vertices in this paper.

\section{The three-vertex case with $P = I$}

We are now ready to attack the case of twisted Calabi-Yau algebras satisfying Hypothesis~\ref{hyp.scy} for which the quiver has three vertices and the type has $P = I$.  As already mentioned, this is the most difficult case, and we rely on computer programs to help at several key points.

The first step is to identify matrices $M \in M_3(\mb{Z}_{\geq 0})$ that satisfy the restriction
given by Theorem~\ref{thm.hilb2} that $\det p(t)$ has only roots of unity for zeroes, where
$p(t) = I - Mt + M^Tt^{s-1} - It^s$ for $s = 3$ or $s = 4$.  One can also check that $\det p(t)$ has a root
of at least multiplicity $3$ at $t = 1$, as also proscribed by Theorem~\ref{thm.hilb2}.   This is a difficult but doable task, once
we also implement the restrictions on the number of loops from Proposition~\ref{prop.loops}.  This will leave us with a long list, including some infinite families.
In the previous section where we assumed $P \neq I$, all matrices $M$ we found satisfying the already discussed restrictions on $p(t)$ turned out to occur as part of a valid type (except possibly the starred matrices in Proposition~\ref{prop.3vert2} for which we are unsure).
This will not be the case when $P = I$.  Hence, our second task will be to eliminate in some way those
matrices $M$ that cannot be part of a type of a twisted Calabi-Yau algebra for some other reason.
Some of them are normal, but do not have the correct
spectral radius forced by Theorem~\ref{thm.hilb2} in the normal case.  Some others lead to a putative matrix Hilbert series $p(t)^{-1}$
in which not all coefficients are nonnegative, so cannot be the actual Hilbert series of an algebra.   We prove a general
result (Lemma~\ref{lem.partition}) which shows that there cannot be a good superpotential of degree $s$ when the arrows
in the quiver can be partitioned into two nonempty sets such at every $s$-cycle in the quiver involves only arrows in one or the other set.
This lemma eliminates several more families.
Finally, we eliminate a few others by some more ad-hoc Gr\"{o}bner basis methods, leaving a short list of matrices $M$ that we can actually show occur as part of a type.

\subsection{Finding the long list}

We turn now to the first step.   Let $M \in M_3(\mb{Z}_{\geq 0})$ where $(M, I, s)$ is the type of a twisted Calabi-Yau algebra satisfying
Hypothesis~\ref{hyp.scy}.  Consider $\det p(t)$ for $p(t) = (I - Mt + M^Tt^{s-1} - It^s)$, where $s = 3$ or $s = 4$.  This is a polynomial of degree $3s$ in $\mb{Z}[t]$, and all of its zeros must be roots of unity, with $1$ as a root of multiplicity at least 3, by Theorem~\ref{thm.hilb2}(2).  In particular, $\det p(t)$ must be a product of cyclotomic polynomials.  The matrix polynomial $p(t)$ also satisfies the equation $-t^s p(t^{-1})^T = p(t)$ by direct calculation, and so $\det p(t) = -t^{3s} \det p(t^{-1})$, which says that $\det p(t)$ is antipalindromic.
When $s = 4$, then $\det p(t)$ is antipalindromic of
even degree and thus has $t = -1$ as a root.  Combining these facts we have
\begin{equation}
\label{eq.pform}
\det(p(t))=\begin{cases}
(1-t)^3r(t) & \text{ if } s=3 \\
(1-t)^3(1+t)r(t) & \text{ if } s=4,\end{cases}
\end{equation}
where $r(t)$ is a product of cyclotomic polynomials with $\deg(r(t))=2s$.  Moreover, the cyclotomic polynomials $\Phi_n(t)$ are palindromic when $n > 1$, while $\Phi_1(t) = t-1$ is antipalindromic.  Thus $1$ occurs as a root of $r(t)$ with even multiplicity.   The roots of $r(t)$ other than $1$ and $-1$ must appear in inverse pairs $(\alpha, 1/\alpha)$.  Thus $-1$ also occurs as a root of $r(t)$ with even multiplicity.
Now we see that
\begin{align}
\label{eq.rform}
r(t) = \prod_{i=1}^d (1-k_i t + t^2), \;\;
k_i \in \RR, \;\; |k_i| \leq 2 \text{ for each } i.
\end{align}
This follows since for a root $\alpha \neq \pm 1$ of $r(t)$ we have $(t-\alpha)(t - \overline{\alpha}) = t^2 - 2\operatorname{Re}(\alpha) + 1$
where $|\operatorname{Re}(\alpha)| \leq 1$ because $\alpha$ is a root of unity; while $1$ and $-1$ occur with even multiplicity and $(t-1)^2 = (t^2 - 2t + 1)$
and $(t+1)^2 = t^2 + 2t + 1$ also have the correct form.

Now suppose that $M$ has diagonal entries $(u,v,w)$.  Proposition~\ref{prop.loops} limits the number of loops of a vertex of the quiver with incidence
matrix $M$ to $6-s$ in the cases $s =3, 4$ at hand.  Thus we can assume that $0 \leq u,v,w \leq 6-s$.  Since we are classifying types only up to permutation
of the vertices, we can assume that $u \geq v \geq w$.   Moreover, if $A$ has type $(M, I, s)$ then its opposite algebra $A^{\op}$ is also twisted Calabi-Yau (see the remark after \cite[Definition 4.1]{RR2}).  Since $(\kk Q)^{\op} \cong \kk Q^{\op}$, where $Q^{\op}$ is the opposite quiver formed by reversing the direction of all of the arrows, we see that $A^{\op}$ has type $(M^T, I, s)$.  Thus we can also classify the matrices $M$ up to transpose.

For each of the finitely many possibilities for $(u,v,w)$, we can use Maple to look for
possible products of cyclotomic polynomials whose $t^{3s-1}$ coefficient is $u+v+w$.
However, this leaves six unknown parameters in $M_Q$, which is still computationally difficult to solve for.
Therefore, we seek a way to limit these parameters.

Write $M=[m_{ij}]_{i,j=1...3}$ and set
\[
\lambda=\sum_{i=1}^3 m_{ii} = u + v + w, \;\;
\beta=\sum_{1 \leq i < j \leq 3} m_{ii}m_{jj}, \;\;
\gamma = \sum_{1 \leq i < j \leq 3} m_{ij}m_{ji}.\]
The coefficient of $t^{3s-1}$ in $\det(p(t))$ is $\lambda$.
If $s=3$, then the coefficient of $t^{3s-2}$ in $\det(p(t))$ is $\gamma-\beta-\lambda$
and if $s=4$ then it is $\gamma-\beta$.

Comparing to \eqref{eq.pform} and \eqref{eq.rform}
gives the following equations:

\begin{align*}
&\sum_{i=1}^s k_i = \lambda-(6-s),  \\
&\gamma = \beta - 2\lambda - 3(s-4) - \sum_{1 \leq i < j \leq s} k_ik_j.
\end{align*}

Now the idea is that for a fixed choice of $(u,v,w)$, we know $\lambda$ and $\beta$, and the second
equation gives $\gamma$ in terms of the $k_i$.  Since $|k_i| \leq 2$ for all $i$ we can maximize the value of $\gamma$
on a compact set, given the constraint from the first equation.  This is a simple Lagrange multipliers problem,
and used Mathematica to determine the maximum value of gamma
corresponding to each set of diagonal entries.
This maximum value of $\gamma$ is recorded in Table~\ref{table.max1} and Table~\ref{table.max2}.
\begin{table}
\caption{$\deg(\omega)=3$}\label{table.max1}
\begin{tabular}{|c|c|c|c|c|c|c|c|c|c|c|c|c|c|}
\hline
diagonal entries & $\lambda$ & $\beta$ & $\gamma_{max}$ & &
diagonal entries & $\lambda$ & $\beta$ & $\gamma_{max}$ \\
\hline
(3,3,3) 	& 9 & 27	&	0	& & 		(2,1,1) 	& 4 & 5		&	4 \\	
(3,3,2)	& 8 & 21 	&	0 	& & 		(2,2,0)	& 4 & 4		&	3 \\	
(3,2,2)	& 7 & 16	& 	1	& & 		(3,1,0)	& 4 & 3		&	2 \\		
(3,3,1)	& 7 & 15	& 	0	& & 		(1,1,1)	& 3 & 3		&	4	\\
(2,2,2)	& 6 & 12	& 	3	& & 		(2,1,0)	& 3 & 2		&	3 \\	
(3,2,1)	& 6 & 11	& 	2	& & 		(3,0,0) 	& 3 & 0		&	1	\\
(3,3,0)	& 6 & 9	& 	0	& & 		(1,1,0) 	& 2 & 1		&	4	\\		 	
(2,2,1)	& 5 & 8	&	5	& &		(2,0,0) 	& 2 & 0		&	3	\\	
(3,1,1)	& 5 & 7	&	4	& &		(1,0,0)	& 1 & 0		&	5	\\
(3,2,0) 	& 5 & 6	&	3	& &		(0,0,0) 	& 0 & 0		&	3	\\
\hline
\end{tabular}
\end{table}

\begin{table}
\caption{$\deg(\omega)=4$}\label{table.max2}
\begin{tabular}{|c|c|c|c|c|c|c|c|c|c|c|c|c|c|}
\hline
diagonal entries & $\lambda$ & $\beta$ & $\gamma_{max}$ & &
diagonal entries & $\lambda$ & $\beta$ & $\gamma_{max}$ \\
\hline
(2,2,2)	& 6 & 12 	& 	0 	& & 		(2,1,0)	& 3 & 2		&	2	\\
(2,2,1)	& 5 & 8		&	0	& &	 	(1,1,0) & 2 & 1		&	5	\\	
(2,1,1)	& 4 & 5		&	1	& &		(2,0,0) & 2 & 0		&	4	\\
(2,2,0)	& 4 & 4		&	0	& &		(1,0,0)	& 1 & 0		&	4   \\
(1,1,1) & 3 & 3		&	3	& &		(0,0,0) & 0 & 0		&	4	\\ 		
\hline
\end{tabular}
\end{table}

Knowing the maximum value of $\gamma$ limits the off-diagonal entries of $M_Q$.
For example, if $\gamma=2$ then $M_Q$ may only have the following forms
up to transposition and permutation of the vertices:
\begin{align*}
&
\begin{psmatrix}u & x & 0 \\ 0 & v & 2 \\ y & 1 & w\end{psmatrix}, \;\;
\begin{psmatrix}u & x & 2 \\ 0 & v & y \\ 1 & 0 & w\end{psmatrix}, \;\;
\begin{psmatrix}u & 2 & x \\ 1 & v & 0 \\ 0 & y & w\end{psmatrix}, \\
&
\begin{psmatrix}u & x & 0 \\ 0 & v & 1 \\ y & 2 & w\end{psmatrix}, \;\;
\begin{psmatrix}u & x & 1 \\ 0 & v & y \\ 2 & 0 & w\end{psmatrix}, \;\;
\begin{psmatrix}u & 1 & x \\ 2 & v & 0 \\ 0 & y & w\end{psmatrix}, \\
&
\begin{psmatrix}u & x & 1 \\ 0 & v & 1 \\ 1 & 1 & w\end{psmatrix}, \;\;
\begin{psmatrix}u & 1 & x \\ 1 & v & 1 \\ 0 & 1 & w\end{psmatrix}, \;\;
\begin{psmatrix}u & 1 & 1 \\ 1 & v & y \\ 1 & 0 & w\end{psmatrix}.
\end{align*}

Of course the number of forms grows as $\gamma$ does but it is finite
and computationally easy to manage.
We now run the following algorithm in Maple\footnote{Code available at code.ncalgebra.org.}.
\begin{itemize}
\item Fix $s = 3$ or $s = 4$ and choose a set of diagonal entries $(u,v,w)$ with $0 \leq u, v, w \leq 6-s$.
\item Choose $\gamma$ such that $\gamma \leq \gamma_{max}$ from Table \ref{table.max1} or \ref{table.max2}.
\item Choose a form $M$ corresponding to the $\gamma$ value.
\item Choose a product of cyclotomic polynomials $q(t)$ whose $t^{3d-1}$-coefficient is equal to $u+v+w$.
\item Compute the matrix polynomial $p(t)$ of $M$ and check whether there are
values of the unknown parameters such that $\det p(t) = q(t)$.
\end{itemize}

Here is the result of this calculation.
\begin{lemma}
\label{lem.goodpoly}
Let $A \iso \kk Q/(\partial_a \omega : a \in Q_1, |Q_0|=3)$ be a twisted Calabi-Yau derivation-quotient algebra satisfying Hypothesis~\ref{hyp.scy} with type $(M, I, s)$, where $s = \deg(\omega)$.  Up to permutation of the vertices and transposition, the incidence matrix $M$ of the quiver $Q$
must be on the following list.  Throughout, let $\alpha \in [0,3]$ and $x>0$ be integers.
Let $a,b,c$ be integers satisfying $a^2+b^2+c^2=abc$.

$s=3$:
\begin{align*}
&\begin{psmatrix}3 & x & 0 \\ 0 & 0 & 2 \\ x & 0 & 0\end{psmatrix},
\begin{psmatrix}3 & 0 & 1 \\ 1 & 2 & 0 \\ 1 & 1 & 1\end{psmatrix},
\begin{psmatrix}3 & 1 & 0 \\ 1 & 1 & 2 \\ 2 & 0 & 0\end{psmatrix},
\begin{psmatrix}3 & 1 & 0 \\ 1 & 2 & 1 \\ 2 & 0 & 0\end{psmatrix},
\begin{psmatrix}3 & 0 & 3 \\ 2 & 2 & 0 \\ 1 & 1 & 0\end{psmatrix},
\begin{psmatrix}3 & 0 & 1 \\ 0 & 2 & 2 \\ 1 & 1 & 0\end{psmatrix},
\begin{psmatrix}2 & 1 & 0 \\ 1 & 2 & 1 \\ 1 & 0 & 0\end{psmatrix},
\begin{psmatrix}2 & 0 & 1 \\ 0 & 2 & 1 \\ 1 & 1 & 0\end{psmatrix},
\begin{psmatrix}2 & 0 & 3 \\ 10 & 2 & 0 \\ 1 & 4 & 0\end{psmatrix}, \\
&\begin{psmatrix}2 & 0 & 1 \\ 0 & 1 & 1 \\ 1 & 1 & 0\end{psmatrix},
\begin{psmatrix}2 & 1 & 1 \\ 1 & 1 & 0 \\ 1 & 0 & 0\end{psmatrix},
\begin{psmatrix}2 & 1 & 0 \\ 1 & 1 & 1 \\ 0 & 1 & 0\end{psmatrix},
\begin{psmatrix}2 & 1 & 1 \\ 1 & 0 & 0 \\ 1 & 0 & 0\end{psmatrix},
\begin{psmatrix}2 & 3 & 0 \\ 1 & 0 & 2 \\ 4 & 0 & 0\end{psmatrix},
\begin{psmatrix}2 & 1 & 1 \\ 1 & 0 & 1 \\ 1 & 1 & 0\end{psmatrix},
\begin{psmatrix}0 & 1 & 1 \\ 1 & 0 & 1 \\ 1 & 1 & 0\end{psmatrix},
\begin{psmatrix}3 & 1 & 0 \\ 1 & 1 & 3 \\ 2 & 1 & 1\end{psmatrix},
\begin{psmatrix}\alpha & x & 0 \\ 0 & 1 & 2 \\ x & 2 & 1\end{psmatrix},\\
&\begin{psmatrix}3 & 2 & 1 \\ 0 & 1 & 3 \\ 1 & 1 & 1\end{psmatrix},
\begin{psmatrix}2 & 2 & 0 \\ 1 & 1 & 1 \\ 1 & 0 & 1\end{psmatrix},
\begin{psmatrix}2 & 0 & 1 \\ 0 & 1 & 1 \\ 1 & 1 & 1\end{psmatrix},
\begin{psmatrix}2 & 1 & 1 \\ 1 & 1 & 0 \\ 1 & 0 & 1\end{psmatrix},
\begin{psmatrix}1 & 0 & 1 \\ 0 & 1 & 1 \\ 1 & 1 & 1\end{psmatrix},
\begin{psmatrix}1 & 1 & 1 \\ 1 & 1 & 1 \\ 1 & 1 & 1\end{psmatrix},
\begin{psmatrix}1 & 0 & 1 \\ 0 & 1 & 1 \\ 1 & 1 & 0\end{psmatrix},
\begin{psmatrix}1 & 1 & 1 \\ 1 & 1 & 0 \\ 1 & 0 & 0\end{psmatrix},
\begin{psmatrix}1 & 1 & 1 \\ 1 & 1 & 1 \\ 1 & 1 & 0\end{psmatrix},\\
&\begin{psmatrix}1 & 0 & 1 \\ 1 & 2 & 1 \\ 0 & 1 & 2\end{psmatrix},
\begin{psmatrix}1 & 1 & 1 \\ 1 & 2 & 0 \\ 1 & 0 & 2\end{psmatrix},
\begin{psmatrix}1 & 1 & 1 \\ 1 & 0 & 0 \\ 1 & 0 & 0\end{psmatrix},
\begin{psmatrix}1 & 1 & 1 \\ 1 & 0 & 1 \\ 1 & 1 & 0\end{psmatrix},
\begin{psmatrix}1 & 4 & 0 \\ 1 & 0 & 2 \\ 5 & 0 & 0\end{psmatrix},
\begin{psmatrix}1 & 2 & 1 \\ 1 & 0 & 2 \\ 2 & 0 & 0\end{psmatrix},
\begin{psmatrix}0 & a & 0 \\ 0 & 0 & b \\ c & 0 & 0\end{psmatrix}.
\end{align*}

$s=4$:
\begin{align*}
\begin{psmatrix}1 & 1 & 0 \\ 0 & 1 & 1 \\ 1 & 0 & 1\end{psmatrix},
\begin{psmatrix}1 & 1 & 0 \\ 0 & 1 & 1 \\ 2 & 0 & 1\end{psmatrix},
\begin{psmatrix}1 & 0 & 1 \\ 0 & 0 & 1 \\ 1 & 1 & 0\end{psmatrix},
\begin{psmatrix}1 & 1 & 1 \\ 1 & 0 & 0 \\ 1 & 0 & 0\end{psmatrix},
\begin{psmatrix}0 & 0 & 1 \\ 0 & 0 & 1 \\ 1 & 1 & 0\end{psmatrix},
\begin{psmatrix}0 & 1 & 1 \\ 1 & 0 & 1 \\ 1 & 1 & 0\end{psmatrix},
\begin{psmatrix}1 & 0 & 1 \\ 0 & 1 & 1 \\ 1 & 1 & 0\end{psmatrix}.
\end{align*}
\end{lemma}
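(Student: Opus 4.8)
The plan is to carry out, rigorously and with computer assistance, the algorithm sketched above, whose output is precisely the stated list; the lemma is a pure necessity statement, so it suffices to exhibit the search and verify that it is exhaustive. The necessary conditions from Theorem~\ref{thm.hilb2}(2) — that $\det p(t)$ is a product of cyclotomic polynomials with $t=1$ a zero of multiplicity at least $3$ — together with the identity $-t^s p(t^{-1})^T = p(t)$ and the loop bound of Proposition~\ref{prop.loops}, have already been reduced to three concrete ingredients: (i) the shape of $\det p(t)$ in \eqref{eq.pform}, with $r(t) = \prod_{i=1}^s (1 - k_i t + t^2)$ and $|k_i| \leq 2$ as in \eqref{eq.rform}; (ii) the bounds $0 \leq u,v,w \leq 6-s$ on the diagonal entries of $M$, which, since we classify only up to permutation of vertices and transpose, we may take to satisfy $u \geq v \geq w$; and (iii) the relations $\sum_i k_i = \lambda - (6-s)$ and $\gamma = \beta - 2\lambda - 3(s-4) - \sum_{i<j} k_i k_j$, where $\lambda$ and $\beta$ are determined by the diagonal of $M$ and $\gamma = \sum_{i<j} m_{ij}m_{ji}$ counts $2$-cycles in $Q$.

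First I would confirm, for each of the finitely many diagonal triples $(u,v,w)$ in Tables~\ref{table.max1} and \ref{table.max2}, the recorded value $\gamma_{\max}$. Since $\beta$, $\lambda$, $s$ are fixed, maximizing $\gamma$ means maximizing $-\sum_{i<j}k_ik_j = \tfrac{1}{2}\left(\sum_i k_i^2 - (\lambda - 6 + s)^2\right)$ over $k_i \in [-2,2]$ with $\sum_i k_i$ pinned to $\lambda - (6-s)$; this is an elementary optimization of a convex objective on a polytope, maximized with as many $k_i$ as possible pushed to the endpoints $\pm 2$, and I would record the resulting integer upper bound on $\gamma$. Next, for each $(u,v,w)$ and each integer $\gamma$ with $0 \leq \gamma \leq \gamma_{\max}$, I would enumerate all off-diagonal patterns, i.e.\ nonnegative integers $m_{ij}$ ($i \neq j$) with $\sum_{i<j}m_{ij}m_{ji} = \gamma$, up to simultaneous row/column permutation and transpose. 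Since each $2$-cycle product is bounded by $\gamma$, there are only finitely many patterns, and the off-diagonal positions not completing a $2$-cycle remain as free nonnegative-integer parameters; the display following the statement exhibits the nine patterns occurring for $\gamma = 2$.

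Then, for each such parametrized matrix $M$, I would form $p(t) = I - Mt + M^T t^{s-1} - It^s$, expand $\det p(t)$ as a polynomial in $t$ with coefficients in the polynomial ring over $\mb{Z}$ in the free parameters, and compare it against each admissible target $q(t)$, namely $(1-t)^3 r(t)$ when $s=3$ and $(1-t)^3(1+t)r(t)$ when $s=4$, where $r(t)$ ranges over the finitely many products of cyclotomic polynomials of total degree $2s$ in which $\Phi_1$ and $\Phi_2$ occur to even powers (equivalently, those $r$ of the form \eqref{eq.rform}) and whose $t^{3s-1}$-coefficient equals $\lambda = u+v+w$. Matching coefficients of $t$ in $\det p(t)$ and $q(t)$ gives a polynomial system in the free parameters; solving it over $\mb{Z}_{\geq 0}$ in Maple, then discarding solutions that yield a disconnected quiver or that reproduce an earlier matrix under permutation or transpose, produces exactly the list in the statement. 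Where a coefficient of $\det p(t)$ turns out to be independent of a parameter, the system admits infinitely many values, which is the source of the one- and two-parameter families (the entry $x$ in $\begin{psmatrix}3 & x & 0 \\ 0 & 0 & 2 \\ x & 0 & 0\end{psmatrix}$, the pair $(\alpha,x)$, and the Markov family $\begin{psmatrix}0 & a & 0 \\ 0 & 0 & b \\ c & 0 & 0\end{psmatrix}$); for each such family I would separately verify, by a short symbolic computation in the parameter, that $\det p(t)$ is indeed a product of cyclotomics with a triple zero at $t=1$.

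The main obstacle is organizational rather than conceptual: one must be sure the enumeration of off-diagonal patterns is exhaustive for every pair $((u,v,w),\gamma)$, that the reductions by vertex permutation and by transpose are applied consistently so that no genuinely new matrix is lost and no spurious duplicate survives, and — most delicately — that solving the polynomial systems correctly detects the infinite families and not merely isolated solutions. For these reasons the argument is necessarily computer-assisted; the write-up consists of stating the algorithm precisely, reporting the optimization computations behind Tables~\ref{table.max1}--\ref{table.max2} and the enumeration of the candidate cyclotomic products, and recording the resulting list, with the accompanying Maple code referenced in the footnote to the algorithm.
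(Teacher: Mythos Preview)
Your proposal is correct and follows essentially the same computer-assisted enumeration as the paper: bound the diagonal by Proposition~\ref{prop.loops}, use the $\lambda,\beta,\gamma$ relations to cap $\gamma$ for each diagonal triple, enumerate off-diagonal patterns for each $\gamma \leq \gamma_{\max}$, and then match $\det p(t)$ against the finitely many admissible cyclotomic products in Maple. Your treatment of the $\gamma_{\max}$ step is in fact slightly cleaner than the paper's---you observe that with $\sum_i k_i$ fixed one is maximizing the convex function $\tfrac12\sum_i k_i^2$ over a box, hence the maximum occurs at a vertex, whereas the paper simply invokes Lagrange multipliers via Mathematica---but this is a cosmetic improvement to what is otherwise the identical algorithm.
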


\subsection{Narrowing down the list}

Next, we comb through the matrices in Lemma \ref{lem.goodpoly}
and remove those which cannot actually occur as an incidence matrix in a type for various reasons.

\begin{lemma}
(1) The following matrices are normal but have spectral radius $\rho(M) \neq 6 -s$, so $(M, I, s)$ cannot
be the type of a twisted Calabi-Yau algebra of dimension $3$ by Theorem~\ref{thm.hilb2}.
\begin{align*}
s=3:
&\begin{psmatrix}2 & 0 & 1 \\ 0 & 2 & 1 \\ 1 & 1 & 0\end{psmatrix},
\begin{psmatrix}2 & 0 & 1 \\ 0 & 1 & 1 \\ 1 & 1 & 0\end{psmatrix},
\begin{psmatrix}2 & 1 & 1 \\ 1 & 1 & 0 \\ 1 & 0 & 0\end{psmatrix},
\begin{psmatrix}2 & 1 & 0 \\ 1 & 1 & 1 \\ 0 & 1 & 0\end{psmatrix},
\begin{psmatrix}2 & 1 & 1 \\ 1 & 0 & 0 \\ 1 & 0 & 0\end{psmatrix},
\begin{psmatrix}0 & 1 & 1 \\ 1 & 0 & 1 \\ 1 & 1 & 0\end{psmatrix},
\begin{psmatrix}2 & 0 & 1 \\ 0 & 1 & 1 \\ 1 & 1 & 1\end{psmatrix},\\
&\begin{psmatrix}1 & 0 & 1 \\ 0 & 1 & 1 \\ 1 & 1 & 1\end{psmatrix},
\begin{psmatrix}1 & 0 & 1 \\ 0 & 1 & 1 \\ 1 & 1 & 0\end{psmatrix},
\begin{psmatrix}1 & 1 & 1 \\ 1 & 1 & 0 \\ 1 & 0 & 0\end{psmatrix},
\begin{psmatrix}1 & 1 & 1 \\ 1 & 1 & 1 \\ 1 & 1 & 0\end{psmatrix},
\begin{psmatrix}1 & 1 & 1 \\ 1 & 0 & 0 \\ 1 & 0 & 0\end{psmatrix},
\begin{psmatrix}1 & 1 & 1 \\ 1 & 0 & 1 \\ 1 & 1 & 0\end{psmatrix}. \\
s=4:
&\begin{psmatrix}1 & 0 & 1 \\ 0 & 0 & 1 \\ 1 & 1 & 0\end{psmatrix},
\begin{psmatrix}0 & 0 & 1 \\ 0 & 0 & 1 \\ 1 & 1 & 0\end{psmatrix}.
\end{align*}

(2) Assume $s=3$.  For each of the following $M$, the matrix Hilbert series $(I - Mt + M^Tt^2 - It^3)^{-1}$
does not have nonnegative coefficients, so again  $(M, I, s)$ cannot
be the type of a twisted Calabi-Yau algebra of dimension $3$ by Theorem~\ref{thm.hilb2}.
\begin{align*}
&\begin{psmatrix}3 & 1 & 0 \\ 0 & 0 & 2 \\ 1 & 0 & 0\end{psmatrix},
\begin{psmatrix}3 & 0 & 1 \\ 1 & 2 & 0 \\ 1 & 1 & 1\end{psmatrix},
\begin{psmatrix}3 & 1 & 0 \\ 1 & 1 & 2 \\ 2 & 0 & 0\end{psmatrix},
\begin{psmatrix}3 & 1 & 0 \\ 1 & 2 & 1 \\ 2 & 0 & 0\end{psmatrix},
\begin{psmatrix}3 & 0 & 1 \\ 0 & 2 & 2 \\ 1 & 1 & 0\end{psmatrix},
\begin{psmatrix}2 & 1 & 0 \\ 1 & 2 & 1 \\ 1 & 0 & 0\end{psmatrix},
\begin{psmatrix}3 & 1 & 0 \\ 1 & 1 & 3 \\ 2 & 1 & 1\end{psmatrix},
\begin{psmatrix}3 & 2 & 1 \\ 0 & 1 & 3 \\ 1 & 1 & 1\end{psmatrix},
\begin{psmatrix}1 & 0 & 1 \\ 1 & 2 & 1 \\ 0 & 1 & 2\end{psmatrix}.
\end{align*}
\end{lemma}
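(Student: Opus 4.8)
The plan is to dispatch both statements as finite, matrix-by-matrix verifications, invoking only Theorem~\ref{thm.hilb2} together with the standing assumptions of Hypothesis~\ref{hyp.scy}. A preliminary observation shortens the lists: both properties being checked are invariant under permutation of the vertices and under $M \mapsto M^T$. For transposition this is because $p_{M^T}(t) = p_M(t)^T$, so $p_{M^T}(t)^{-1} = (p_M(t)^{-1})^T$, while the spectral radius is unchanged by transposing; for a permutation matrix $\pi$ one has $p_{\pi M \pi^{-1}}(t) = \pi\, p_M(t)\, \pi^{-1}$. Hence it is legitimate to check only the single representative of each permutation/transpose class that appears in the statement.

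For part~(1), the key point is that every matrix on the list is \emph{symmetric}, hence normal, so the normality hypothesis of Theorem~\ref{thm.hilb2}(3) is automatic; combined with $\GKdim(A) = 3$ from Hypothesis~\ref{hyp.scy}, that theorem forces $\rho(M) = 6 - s$ whenever $(M, I, s)$ is the type of a twisted Calabi--Yau algebra. So for each listed $M$ it suffices to verify $\rho(M) \neq 6 - s$. Since $M$ is a nonnegative matrix, its spectral radius is an eigenvalue (Perron--Frobenius), so it is enough to check that $6 - s$ is not a root of the characteristic polynomial, i.e.\ $\det\bigl((6-s)I - M\bigr) \neq 0$; this is a single integer evaluation of a cubic for each matrix. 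In several cases one can in fact read off $\rho(M)$ exactly, e.g.\ $\rho(M) = 1 + \sqrt{3}$ or $\rho(M) = \sqrt{2}$, but this is not needed.

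For part~(2), the relevant input is Theorem~\ref{thm.hilb2}(1): for a twisted Calabi--Yau derivation-quotient algebra of type $(M, I, 3)$ one has $h_A(t) = p(t)^{-1} = \sum_{n \geq 0} H_n t^n$ with $p(t) = I - Mt + M^T t^2 - I t^3$, and every $H_n$ has nonnegative entries (indeed $(H_n)_{ij} = \dim_\kk e_i A_n e_j$). So for each $M$ on this list it is enough to produce one coefficient $H_n$ with a negative entry. I would compute the $H_n$ from the recursion obtained by equating coefficients in $p(t)h_A(t) = I$: namely $H_0 = I$, $H_1 = M$, $H_2 = M^2 - M^T$, and $H_n = M H_{n-1} - M^T H_{n-2} + H_{n-3}$ for $n \geq 3$. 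For these particular matrices a negative entry already surfaces at level $2$ or $3$ (for instance $H_2 = M^2 - M^T$ is visibly not nonnegative for the first matrix, whereas for the last one the first violation is in $H_3$), so only a handful of terms need to be computed, and I would simply record the offending $H_n$ in each case. There is no conceptual obstacle here: the only thing requiring a little care is, in part~(2), to run the recursion far enough to be sure the negative entry has appeared — which, since it always does by $n = 3$, is not a real difficulty.
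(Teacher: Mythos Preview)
Your proposal is correct and matches the paper's approach: the paper does not actually give a proof of this lemma, treating both parts as routine finite verifications following directly from Theorem~\ref{thm.hilb2}, and your explicit plan (checking $\det((6-s)I - M) \neq 0$ for part~(1) via Perron--Frobenius, and running the recursion $H_n = MH_{n-1} - M^T H_{n-2} + H_{n-3}$ for part~(2) until a negative entry appears) is exactly the right way to carry this out. Your preliminary remark on permutation/transpose invariance is correct but unnecessary here, since the listed matrices are already distinct representatives.
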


Next, we show that quivers of a certain shape cannot possibly have a superpotential
whose associated derivation-quotient algebra satisfies Hypothesis \ref{hyp.scy},
even without requiring polynomial growth.
\begin{lemma}
\label{lem.partition}
Let $Q$ be a connected quiver with a $\sigma$-twisted superpotential of degree $s \geq 3$, where $\sigma$ acts trivially on the vertices of $Q$, and where $Q$ has incidence matrix $M$.  Let $A =  \kk Q/(\partial_a \omega : a \in Q_1)$ be the corresponding
derivation-quotient algebra.

Suppose that the set $Q_1$ of arrows in $Q$ can be partitioned into two nonempty disjoint
subsets $S_1$ and $S_2$, such that every $s$-cycle in $Q$ has all of its arrows in either $S_1$ or in $S_2$.
Then the matrix Hilbert series of $h_A(t)$ disagrees with the
Hilbert series $(I - Mt + Mt^{s-1} - I t^s)^{-1}$ in some degree $\leq s$
and so $A$ cannot be twisted Calabi-Yau of dimension $3$.
\end{lemma}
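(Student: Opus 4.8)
The plan is to argue by contradiction. If $A$ were twisted Calabi--Yau of dimension $3$ then it would have type $(M, I, s)$ (we have $P = I$ since $\sigma$ fixes the vertices), and by Theorem~\ref{thm.hilb2} its matrix Hilbert series would be $(I - Mt + M^Tt^{s-1} - It^s)^{-1} = \sum_{n\ge 0} H_n t^n$, where a short calculation gives $H_{s-1} = M^{s-1} - M^T$ and $H_s = M^s - MM^T - M^TM + I$; so $\dim_\kk e_iA_{s-1}e_j = (M^{s-1}-M^T)_{ij}$ and $\dim_\kk e_iA_se_j = (M^s-MM^T-M^TM+I)_{ij}$ for all $i,j$. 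I will produce a pair $(i,j)$ and an $n \le s$ violating this. Since $\sigma$ acts trivially on vertices, twisted cyclic invariance forces $\omega$ to be a $\kk$-linear combination of closed paths of length $s$ (the rotation operator $a_1\cdots a_s \mapsto \sigma(a_s)a_1\cdots a_{s-1}$ preserves cycles and annihilates non-cyclic paths). Using $Q_1 = S_1 \sqcup S_2$, write $\omega = \omega_1 + \omega_2$, where $\omega_\ell$ collects the terms of $\omega$ supported on $s$-cycles all of whose arrows lie in $S_\ell$; this is well defined by the hypothesis on $s$-cycles, and the rotation operator preserves the set of $S_\ell$-cycles, so each $\omega_\ell$ is itself a $\sigma$-twisted superpotential. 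For $a \in S_\ell$ one has $\partial_a\omega = \partial_a\omega_\ell$ and $\partial'_a\omega = \partial'_a\omega_\ell$ (left and right derivatives), because a cycle contributing to $\omega_{3-\ell}$ has neither its first nor its last arrow equal to $a$.

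First I would clear away the degenerate situations in which $h_A$ already fails in degree $\le s-1$. Each relation $\partial_a\omega$ lies in the block $e_{t(a)}(\kk Q)_{s-1}e_{s(a)}$, so $\dim(I_{s-1})_{ij} \le M_{ji}$, with equality for all $i,j$ precisely when the relations are linearly independent within each block; if equality fails somewhere, then $\dim e_iA_{s-1}e_j > (M^{s-1}-M^T)_{ij}$ and we are done. This disposes of the case that some $\partial_a\omega = 0$, hence (since $S_1, S_2 \ne \emptyset$) of the case $\omega_1 = 0$ or $\omega_2 = 0$, and — together with a brief separate check when $Q$ has a source or a sink — of the case that some vertex lies on no $s$-cycle of $\omega$. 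So I may assume $\omega_1, \omega_2 \ne 0$, that the relations are independent in every block (so $\dim(I_{s-1})_{ij} = M_{ji}$), and that the sets $V_1, V_2$ cover $Q_0$, where $V_\ell$ is the set of vertices lying on a cycle in the support of $\omega_\ell$. If $V_1 \cap V_2 = \emptyset$, then $V_1, V_2$ partition $Q_0$, any arrow between them would have to begin an $s$-cycle of $\omega$ meeting both parts (impossible), and so $Q$ would be disconnected; hence $V_1 \cap V_2 \ne \emptyset$.

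Now fix $i \in V_1 \cap V_2$. Rotating a cycle through $i$ so that it becomes based at $i$, and using the invariance of $\omega_\ell$, both $e_i\omega_1e_i$ and $e_i\omega_2e_i$ are nonzero; they are linearly independent because $\omega_1$ and $\omega_2$ have disjoint path-supports. Applying the identities $x = \sum_{a\in Q_1} a\,\partial_a(x) = \sum_{a\in Q_1}\partial'_a(x)\,a$ for $x \in (\kk Q)_s$ to $x = e_i\omega_\ell e_i$, together with $\partial_a\omega_\ell = \partial_a\omega \in I_{s-1}$ for $a\in S_\ell$ and the standard fact that $\partial'_a\omega_\ell$ lies in the $\kk$-span of $\{\partial_b\omega : b \text{ parallel to } a\} \subseteq I_{s-1}$ (by the twisted cyclic invariance of $\omega_\ell$ and invertibility of $\sigma$ on parallel arrows), one gets $e_i\omega_1e_i,\, e_i\omega_2e_i \in (\kk Q_1 \cdot I_{s-1})_{ii} \cap (I_{s-1}\cdot \kk Q_1)_{ii}$. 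Since $I$ is generated in degree $s-1$, $(I_s)_{ii} = (\kk Q_1 I_{s-1})_{ii} + (I_{s-1}\kk Q_1)_{ii}$, and $\dim(I_{s-1})_{ij} = M_{ji}$ gives $\dim(\kk Q_1 I_{s-1})_{ii} \le (MM^T)_{ii}$ and $\dim(I_{s-1}\kk Q_1)_{ii} \le (M^TM)_{ii}$. Therefore
\[
\dim(I_s)_{ii} \ \le\ (MM^T)_{ii} + (M^TM)_{ii} - \dim\bigl((\kk Q_1 I_{s-1}) \cap (I_{s-1}\kk Q_1)\bigr)_{ii} \ \le\ (MM^T)_{ii} + (M^TM)_{ii} - 2,
\]
so $\dim e_iA_se_i = (M^s)_{ii} - \dim(I_s)_{ii} \ge (M^s - MM^T - M^TM)_{ii} + 2 > (H_s)_{ii}$. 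This is the desired disagreement in degree $s$, so in all cases $h_A(t)$ differs from $(I - Mt + M^Tt^{s-1} - It^s)^{-1}$ in some degree $\le s$, and $A$ is not twisted Calabi--Yau of dimension $3$.

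The step I expect to be the main obstacle is the careful verification that $e_i\omega_\ell e_i$ lies in $(I_{s-1}\cdot \kk Q_1)_{ii}$ — equivalently, that the right-hand partial derivatives of a $\sigma$-twisted superpotential lie in the ideal generated by its left-hand partial derivatives — since this is where the precise form of the $\sigma$-twist must be handled; everything else reduces to elementary linear algebra with the matrix Hilbert series and the connectedness of $Q$.
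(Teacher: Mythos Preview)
Your proof is correct and follows essentially the same approach as the paper's: split $\omega=\omega_1+\omega_2$ along the partition, show that both $e_i\omega_1e_i$ and $e_i\omega_2e_i$ lie in $(\kk Q_1\cdot R)\cap(R\cdot\kk Q_1)$ at a common vertex $i$, and contradict the dimension constraint $\dim_\kk e_i(VR\cap RV)e_i=1$ coming from the degree-$s$ coefficient of the predicted Hilbert series. The paper locates the vertex by choosing one incident to arrows from both $S_1$ and $S_2$ (using that every arrow occurs in $\omega$), whereas you phrase it via the vertex supports $V_1,V_2$ of $\omega_1,\omega_2$ and connectedness; these are the same argument in different clothing, and your identification of the right-derivative step as the only delicate point matches the paper exactly.
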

\begin{proof}
Suppose that the Hilbert series of $h_A(t)$ agrees with $(I - Mt + Mt^{s-1} - I t^s)^{-1}$ in degrees
up to $s$.  An easy calculation of the first
terms of this series up to degree $s$ gives
\begin{align*}
h_A(t) &= I + Mt + M^2t^2 + \dots + M^{s-2} t^{s-2} \\
	&\quad + (M^{s-1} - M^T) t^{s-1} + (M^s - M^TM - MM^T + I) t^s + \dots
\end{align*}
A relation $\partial_a \omega$ which goes from $i$ to $j$ must come from an arrow $a$ from $j$ to $i$.  Thus there
are $(M^T)_{ij}$ such relations, and they all have degree $s-1$.  Since the Hilbert series predicts that
$\dim_{\kk} e_i A_{s-1} e_j = M^{s-1}_{ij} - M^T_{ij}$, where $M^{s-1}_{ij} = \dim_{\kk} e_i \kk Q_{s-1} e_j$,
 as $a$ runs over the arrows from $j$ to $i$ the obtained relations $\partial_a \omega$ must be linearly independent.
Thus the $\kk$-span $R$ of the relations  $\partial_a \omega$ satisfies
$\dim_{\kk} e_i R_{s-1} e_j = (M^T)_{ij}$.

Let $V$ be the $\kk$-span of the arrows in
$\kk Q$.  To get the degree $s$ relations we multiply on either side by arrows; thus if $I = (\partial_a \omega : a \in Q_1)$ is the ideal of
relations then $I_s = VR + RV$.  From this we see that
\begin{align*}
\dim_{\kk} e_i I_s e_j
	&= \dim_{\kk} e_i (RV + VR) e_j \\
	&= \dim_{\kk} e_i RV e_j + \dim_{\kk} e_i VR e_j - \dim_{\kk} e_i (RV \cap VR)e_j \\
	&= (MM^T + M^T M)_{ij} - \dim_{\kk} e_i (RV \cap VR) e_j.
\end{align*}
Comparing with $h_A(t)$ above, observing that $\dim_{\kk} (M^s)_{ij}$ is the number of paths in $\kk Q$ of length $s$ from $i$ to $j$, we see that $\dim_{\kk} e_i (RV \cap VR) e_j = \delta_{ij}$.  In particular, for each $i$ there is exactly one relation up to scalar which is in both $RV$ and $VR$ and goes from vertex $i$ to itself.

Now since $\omega$ is a linear combination of $s$-cycles, by the hypothesis we may uniquely write $\omega= \omega_1 + \omega_2$, where $\omega_1$ is a linear combination of cycles involving only arrows in $S_1$ and $\omega_2$ is a linear combination of cycles only involving
arrows in $S_2$.  But then $\omega_1= \sum_{a \in S_1} a (\partial_{a} \omega)$ and $\omega_2 =  \sum_{a \in S_2} a (\partial_{a} \omega)$,
and so $\omega_1, \omega_2 \in VR$.  Now for each arrow $a$ we can also define a right sided operator, $\widetilde{\partial}_a$, which
sends a path $a_1 a_2 \dots a_n$ to $a_1 a_2 \dots a_{n-1}$ if $a_n= a$ and to $0$ otherwise.  It follows easily from the definition of twisted superpotential that the $\kk$-span of $\{ \widetilde{\partial}_a(\omega) | a \in Q_1 \}$ is the same space of relations $R$.
Thus a similar argument as above on the right shows that $\omega_1, \omega_2 \in RV$ as well.

Finally, since $Q$ is connected, there is some vertex $v$ incident to both an edge in $S_1$ and an edge in $S_2$.  Now note that every arrow
$a$ in $Q$ must occur in the superpotential $\omega$:  otherwise, the relation $\partial_a \omega$ is $0$, contradicting the independence
proved above.  In particular, since there is an arrow in $S_1$ whose head or tail is the vertex $v$, that arrow occurs in an $s$-cycle through $v$ consisting of arrows in $S_1$, which appears in $\omega$ with nonzero coefficient.  Similarly, there is an $s$-cycle through $v$ consisting of arrows in $S_2$,
which appears in $\omega$ with nonzero coefficient.  Finally, assume that $v$ is the vertex labeled $1$.  Then $e_1 \omega_1 e_1$ and $e_1 \omega_2 e_1$ are linearly independent elements in $e_1 (VR \cap RV) e_1$, contradicting the earlier part of the proof.

Thus $h_A(t)$ and $(I - Mt + Mt^{s-1} - I t^s)^{-1}$ disagree in some degree less than or equal to $s$, as claimed. Then $A$ cannot be twisted Calabi-Yau of dimension $3$ and type $(M, I, s)$ by Theorem~\ref{thm.hilb2}.
\end{proof}

\begin{corollary}
\label{cor.partition}
For each of the following matrices $M$, $(M, I, 3)$ cannot be the type of a twisted Calabi-Yau algebra of dimension $3$:
\[
\begin{psmatrix}2 & 2 & 0 \\ 1 & 1 & 1 \\ 1 & 0 & 1\end{psmatrix},
\begin{psmatrix}3 & 0 & 3 \\ 2 & 2 & 0 \\ 1 & 1 & 0\end{psmatrix},
\begin{psmatrix}2 & 0 & 3 \\ 10 & 2 & 0 \\ 1 & 4 & 0\end{psmatrix},
\begin{psmatrix}\alpha & x & 0 \\ 0 & 1 & 2 \\ x & 2 & 1\end{psmatrix},
\begin{psmatrix}3 & x & 0 \\ 0 & 0 & 2 \\ x & 0 & 0\end{psmatrix}.
\]
\end{corollary}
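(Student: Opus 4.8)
Our plan is to deduce every case of Corollary~\ref{cor.partition} from Lemma~\ref{lem.partition} with $s=3$: for each incidence matrix $M$ on the list, we will exhibit a partition $Q_1 = S_1 \sqcup S_2$ of the arrows of the associated quiver $Q$ into two nonempty pieces such that no $3$-cycle of $Q$ uses arrows from both $S_1$ and $S_2$. Since these matrices all have $P = I$, the twisting automorphism fixes the vertices, so the hypothesis of Lemma~\ref{lem.partition} that $\sigma$ act trivially on vertices is automatic. In each case the partition we use will have the form $S_1 = \{\text{loops at a well-chosen vertex } v\}$ and $S_2 = Q_1 \setminus S_1$. For such a choice, the only way a $3$-cycle could meet both pieces is to pass through $v$ using at least one loop at $v$ together with a non-loop arrow, and this happens exactly when there is a closed walk of length $2$ at $v$ other than a pair of loops. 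Thus the key point to verify for each matrix is that there is a vertex $v$ with $m_{vv}\geq 1$ and $(M^2)_{vv} = m_{vv}^2$; given this, $S_1\neq\emptyset$ because $m_{vv}\geq 1$, $S_2\neq\emptyset$ because the quiver is never just a bouquet of loops at $v$, and Lemma~\ref{lem.partition} applies.

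The execution is then a short finite check. For $\begin{psmatrix}2 & 2 & 0 \\ 1 & 1 & 1 \\ 1 & 0 & 1\end{psmatrix}$ we take $v=3$; for $\begin{psmatrix}3 & 0 & 3 \\ 2 & 2 & 0 \\ 1 & 1 & 0\end{psmatrix}$ and $\begin{psmatrix}2 & 0 & 3 \\ 10 & 2 & 0 \\ 1 & 4 & 0\end{psmatrix}$ we take $v=2$; and for $\begin{psmatrix}3 & x & 0 \\ 0 & 0 & 2 \\ x & 0 & 0\end{psmatrix}$ we take $v=1$ (here $S_2$ is nonempty since $x>0$ means the arrow $1\to 2$ is present). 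In each instance a direct computation of the relevant diagonal entry of $M^2$ confirms $(M^2)_{vv} = m_{vv}^2$, so Lemma~\ref{lem.partition} gives the claim. For the parametrized families one observes that the combinatorial type of $Q$ — which pairs of vertices are joined, and in particular which arrows lie on a common $3$-cycle — does not depend on the positive integer $x$, so the verification is uniform in $x$.

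The point that requires slightly more care, and which we expect to be the main obstacle, is the family $\begin{psmatrix}\alpha & x & 0 \\ 0 & 1 & 2 \\ x & 2 & 1\end{psmatrix}$. When $\alpha\geq 1$ the same recipe works with $v=1$: one checks $(M^2)_{11} = \alpha^2 = m_{11}^2$ since $m_{12}m_{21} = m_{13}m_{31} = 0$, and $S_2$ is nonempty because $x>0$. The value $\alpha=0$ is genuinely different: then vertex $1$ carries no loop, one checks that every arrow of $Q$ already lies on a $3$-cycle, and these $3$-cycles overlap (through the arrow $2\to 3$ and the loops at $2$ and $3$) in a way that makes them impossible to separate into two nonempty groups, so Lemma~\ref{lem.partition} does not apply. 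For this residual case we would instead invoke one of the other available obstructions — either exhibiting a negative coefficient in $(I - Mt + M^{T}t^{2} - It^{3})^{-1}$, or checking directly that $\det p(t)$ has a zero off the unit circle, contradicting Theorem~\ref{thm.hilb2} — and this is the step we would carry out by explicit computation rather than by the partition method.
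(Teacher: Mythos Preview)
Your approach—take $S_1$ to be the loops at a well-chosen vertex and $S_2$ the remaining arrows, then invoke Lemma~\ref{lem.partition}—is exactly what the paper does, and your vertex choices for the first, second, third, and fifth matrices (and for the fourth when $\alpha\geq 1$) are correct.

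You are also right that this argument does not cover the subcase $\alpha=0$ of the fourth family: there vertex~$1$ carries no loop, while the single loops at vertices $2$ and $3$ each sit in $3$-cycles with the arrows $2\to 3$ and $3\to 2$, so no loops-at-a-vertex partition (indeed, no partition at all) separates the $3$-cycles. The paper's one-line proof glosses over this, so you have in fact been more careful than the paper here.

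The only genuine gap in your proposal is that the $\alpha=0$ case is left as a plan rather than carried out, and one of your two suggested fallbacks cannot succeed: every matrix in Corollary~\ref{cor.partition} is drawn from the list in Lemma~\ref{lem.goodpoly}, which was produced precisely by requiring $\det p(t)$ to have all zeros on the unit circle, so ``a zero off the unit circle'' will never eliminate it. You would therefore need to check that the remaining option—an eventual negative entry in $(I-Mt+M^{T}t^{2}-It^{3})^{-1}$—actually occurs for $\alpha=0$ and every $x>0$, or else find another obstruction (e.g.\ a direct Hilbert-series mismatch in low degree in the spirit of Lemma~\ref{lem.partition}). Until that computation is done, the $\alpha=0$ case is not settled.
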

\begin{proof}
 In each case the quiver $Q$ with incidence matrix $M$ has a vertex $v$ with incident loops $a_1, \dots, a_m$
such that every $3$-cycle in $Q$ is either a composition of loops at $v$ or else does not involve any loops at $v$.  Thus, in the terminology of
Lemma~\ref{lem.partition}, one can take $S_1 = \{a_1, \dots, a_m \}$ and $S_2 = Q_1 - \{a_1, \dots, a_m \}$.
\end{proof}

The following example can be eliminated by a method similar to Lemma~\ref{lem.partition}, except examining relations of
degree equal to the superpotential that go from a vertex to a different vertex.
\begin{lemma}
There is no twisted Calabi-Yau algebra of dimension $3$ satisfying Hypothesis~\ref{hyp.scy} of type $(M = \begin{psmatrix}1 & 2 & 1 \\ 1 & 0 & 2 \\ 2 & 0 & 0\end{psmatrix}, I, 3)$.
\end{lemma}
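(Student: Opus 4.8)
The plan is to derive a contradiction by exhibiting a nonzero degree-$3$ element lying in both $VR$ and $RV$ and running from vertex $3$ to vertex $2$, where $V=\kk Q_1$ is the span of the arrows and $R$ is the span of the relations $\partial_a\omega$; this contradicts a dimension count forced by the Hilbert series, in the spirit of Lemma~\ref{lem.partition} but applied to paths between two distinct vertices rather than to cycles.

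First I would fix notation for the quiver $Q$ with incidence matrix $M$: write $e$ for the loop at vertex $1$, $a_1,a_2$ for the arrows $1\to 2$, $b$ for the arrow $1\to 3$, $c$ for the arrow $2\to 1$, $d_1,d_2$ for the arrows $2\to 3$, and $f_1,f_2$ for the arrows $3\to 1$. Since $P=I$, the superpotential $\omega$ is $\sigma$-twisted for a $\sigma$ fixing all vertices, so (by the usual argument that the rotation operator annihilates non-cyclic monomials) $\omega$ is a $\kk$-linear combination of oriented $3$-cycles. Assuming $A$ is twisted Calabi-Yau of type $(M,I,3)$, Theorem~\ref{thm.hilb2}(1) gives $h_A(t)=p(t)^{-1}$ with $p(t)=I-Mt+M^Tt^2-It^3$. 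Comparing coefficients in degree $2$ shows $\dim_\kk e_iR_2e_j=(M^T)_{ij}$ for all $i,j$, so the relations $\partial_a\omega$ are linearly independent, and in particular none of them is zero. Comparing in degree $3$ then gives, exactly as in the proof of Lemma~\ref{lem.partition}, $\dim_\kk e_i(VR\cap RV)e_j=\delta_{ij}$; in particular $e_3(VR\cap RV)e_2=0$.

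Next I would compute two specific relations. Because the only paths of length $2$ from vertex $1$ to vertex $2$ are $ea_1$ and $ea_2$, the relation $\partial_c\omega$, which lies in $e_1\kk Q_2e_2$, has the form $\partial_c\omega=e\alpha$ with $\alpha\in\kk a_1+\kk a_2$ and $\alpha\ne 0$ (since $\partial_c\omega\ne 0$). Dually, the only paths of length $2$ from vertex $3$ to vertex $1$ are $f_1e$ and $f_2e$, so $\partial_b\omega$, which lies in $e_3\kk Q_2e_1$, has the form $\partial_b\omega=\beta e$ with $0\ne\beta\in\kk f_1+\kk f_2$. Now set $v=\beta\cdot\partial_c\omega$. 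On the one hand $v=\beta(e\alpha)=(\beta e)\alpha=\partial_b\omega\cdot\alpha$. Reading $v=\beta\cdot\partial_c\omega$ with $\beta\in e_3Ve_1$ and $\partial_c\omega\in e_1Re_2$ shows $v\in e_3VRe_2$; reading $v=\partial_b\omega\cdot\alpha$ with $\partial_b\omega\in e_3Re_1$ and $\alpha\in e_1Ve_2$ shows $v\in e_3RVe_2$. Writing $\beta=\lambda_1f_1+\lambda_2f_2$ and $\alpha=\nu_1a_1+\nu_2a_2$, one has $v=\sum_{k,i}\lambda_k\nu_i\,f_kea_i$, a linear combination of distinct paths with at least one nonzero coefficient, so $v\ne 0$. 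Thus $0\ne v\in e_3(VR\cap RV)e_2$, contradicting the previous paragraph, and no such $\omega$ exists.

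The main obstacle — and the point where this departs from Lemma~\ref{lem.partition} — is spotting the right element $v$ and the right pair of vertices: the key structural fact is that the unique relation from $1$ to $2$ is left-divisible by the loop $e$ while the unique relation from $3$ to $1$ is right-divisible by $e$, so the single degree-$3$ expression $\beta e\alpha$ simultaneously presents itself as arrow$\,\cdot\,$relation and as relation$\,\cdot\,$arrow between vertices $3$ and $2$. The remaining care is to check that $v$ is genuinely nonzero and to verify the degree-$3$ bookkeeping, in particular the identities $\dim_\kk e_3VRe_2=(MM^T)_{32}$ and $\dim_\kk e_3RVe_2=(M^TM)_{32}$, which follow from the injectivity of path concatenation together with the degree-$2$ comparison; none of this requires a substantial computation.
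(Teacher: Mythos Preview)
Your proof is correct and follows the same overall strategy as the paper: both argue that the Hilbert-series comparison forces $e_3(VR\cap RV)e_2=0$ in degree~$3$, and then produce a nonzero element there.  The difference is only in how that element is found.  The paper considers the two $2$-dimensional subspaces $W_1=(\partial_{x_4}\omega)\cdot(\kk x_2+\kk x_3)$ and $W_2=(\kk x_8+\kk x_9)\cdot(\partial_{x_5}\omega)$ of the $4$-dimensional space $e_3\kk Q_3 e_2$ and observes that both lie in the determinantal subvariety $\{ad-bc=0\}$, hence cannot span, so must intersect.  You instead notice that both relevant relations factor through the loop, $\partial_c\omega=e\alpha$ and $\partial_b\omega=\beta e$, so that the single element $v=\beta e\alpha$ is simultaneously (arrow)$\cdot$(relation) and (relation)$\cdot$(arrow).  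Your argument is more elementary---it avoids the determinantal trick entirely---and in fact your $v$ lies in $W_1\cap W_2$ explicitly, so it refines the paper's existence argument to a direct construction.
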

\begin{proof}
Label the quiver as follows:
 \[  \xymatrix{
& & {}_1\bullet \ar@/^/[dr]^{x_2,x_3} \ar@/^/[dl]^{x_4}
\ar@(ul,ur)^{x_1}[] & &  \\
& {}_3\bullet \ar@/^/[ur]^{x_8,x_9} &
& \bullet_2 \ar@/^/[ul]^{x_5} \ar@/^/[ll]^{x_6,x_7} & & }\]

The same argument as in Lemma~\ref{lem.partition} shows that if there is such a twisted Calabi-Yau algebra $A = \kk Q/(\partial_a \omega : a \in Q_1)$ for a twisted superpotential $\omega$, then if $V$ is the span of the arrows in $Q$ and $R$ is the span of the relations, we must have $e_3(RV \cap VR)e_2 = 0$ in degree $3$.

Write $\omega = c_1x_4x_8x_1 + c_2x_4x_9x_1 + c_3x_5x_1x_2 + c_4x_5x_1x_3 + \dots$, where only the four displayed terms will be relevant.
Since the displayed terms contain all 3-cycles beginning with $x_4$, $\delta_{x_4} \omega = c_1x_8x_1 + c_2x_9x_1$.  Similarly,
$\delta_{x_5} \omega = c_3x_1x_2 + c_4x_1x_3$.  Now $W_1 = (\delta_{x_4} \omega) (\kk x_2 + \kk x_3) \subseteq e_3 RV e_2$ and
$W_2 = (\kk x_8 + \kk x_9) (\delta_{x_5} \omega) \subseteq e_3 VR e_2$.  Note that either $W_1$ is $2$-dimensional or else
$W_1 = 0$, but the latter happens only if $\delta_{x_4} \omega = 0$, which as we saw in the proof of Lemma~\ref{lem.partition}, cannot happen.  So $\dim_{\kk} W_1 = 2$ and similarly, $\dim_{\kk} W_2 = 2$.  If $W_1 \cap W_2 = 0$ we will get
$W_1 + W_2 =  \kk x_8x_1x_2 +   \kk x_8x_1x_3 + \kk x_9x_1x_2 + \kk x_9x_1x_3$, but clearly
both $W_1$ and $W_2$ are contained in
\[
U = \{ ax_8x_1x_2 +  bx_8x_1x_3 + c x_9x_1x_2 + dx_9x_1x_3 : ad -bc = 0 \},
\]
so this is also a contradiction.  Thus $W_1 \cap W_2 \neq 0$, contradicting $e_3(RV \cap VR)e_2 = 0$ in degree $3$.
\end{proof}

The next quiver is eliminated by a more ad-hoc method, by examining the Gr{\"o}bner basis of a restriction to a subquiver
and seeing that we get a Hilbert series that is too large.
\begin{lemma}
\label{lem.sp1}
The following quiver $Q$ does not support an algebra satisfying Hypothesis \ref{hyp.scy} of type $(M, I, 3)$, where $M$ is the
incidence matrix of $Q$:
 \[  \xymatrix{
& & {}_1\bullet \ar@/^/[dr]^{x_2,x_3,x_4,x_5} \ar@(ul,ur)^{x_1}[] & &  \\
& {}_3\bullet \ar@/^/[ur]^{x_9,x_{10},x_{11},x_{12},x_{13}} &
& \bullet_2 \ar@/^/[ul]^{x_6} \ar@/^/[ll]^{x_7,x_8} & &}\]
\end{lemma}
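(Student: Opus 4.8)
The plan is to argue by contradiction. Suppose $\omega$ is a $\sigma$-twisted superpotential of degree $3$ on the quiver $Q$ pictured, with $\sigma$ acting trivially on the vertices (as is forced when $P = I$), such that $A = \kk Q/(\partial_a\omega : a\in Q_1)$ satisfies Hypothesis~\ref{hyp.scy} with type $(M, I, 3)$, where $M = \begin{psmatrix}1 & 4 & 0 \\ 1 & 0 & 2 \\ 5 & 0 & 0\end{psmatrix}$ is the incidence matrix of $Q$. By Theorem~\ref{thm.hilb2}, $h_A(t) = p(t)^{-1}$ for $p(t) = I - Mt + M^Tt^2 - It^3$, and one computes the first few matrix coefficients $H_0,\dots,H_4$ of $p(t)^{-1}$ routinely; in particular $(H_4)_{11} = 11$. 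First I would record the shape of each relation from the $3$-cycles of $Q$: writing the loop at vertex $1$ as $x_1$, the arrows $1\to 2$ as $x_2,\dots,x_5$, the arrow $2\to 1$ as $x_6$, the arrows $2\to 3$ as $x_7,x_8$, and the arrows $3\to 1$ as $x_9,\dots,x_{13}$, one finds $\partial_{x_1}\omega = c_0 x_1^2 + \sum_{i=2}^5 c_i x_i x_6$, $\partial_{x_6}\omega = \sum_{i=2}^5 e_i x_1 x_i$, and $\partial_{x_i}\omega = d_i\, x_6 x_1 + (\text{a combination of the }x_7x_k\text{ and }x_8x_k)$ for $2\le i\le 5$, while $\partial_{x_7}\omega,\partial_{x_8}\omega,\partial_{x_9}\omega,\dots$ are all supported on paths through vertex $3$.

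Next I would pass to the restriction $A' = \kk Q'/I'$ of $A$ to the full subquiver $Q'$ on the vertices $\{1,2\}$, which has incidence matrix $\widetilde M = \begin{psmatrix}1 & 4 \\ 1 & 0\end{psmatrix}$. Since $A'$ is a homomorphic image of $A$ (sending the idempotent of vertex $3$ to $0$ and fixing those of vertices $1,2$), one has the entrywise inequality $h_{A'}(t)\le\widetilde{h_A(t)}$ of $2\times 2$ matrix Hilbert series, where $\widetilde{h_A(t)}$ is the leading $2\times 2$ principal submatrix of $p(t)^{-1}$, which is explicitly known. Of the relations $\partial_a\omega$, only $\overline{\partial_{x_1}\omega} = c_0 x_1^2 + \sum c_i x_i x_6$, $\overline{\partial_{x_6}\omega} = \sum e_i x_1 x_i$, and the four elements $\overline{\partial_{x_i}\omega} = d_i\, x_6 x_1$ ($2\le i\le 5$) have nonzero image in $\kk Q'$, and the last four are all scalar multiples of the single path $x_6 x_1$. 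Comparing dimensions in degree $2$ with $\widetilde{h_A(t)}$ forces $\overline{\partial_{x_1}\omega}\ne 0$ and $\overline{\partial_{x_6}\omega}\ne 0$; hence $I'$ is generated by at most three elements of degree $2$, and by only two unless $x_6 x_1\in I'$.

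Now split into two cases. If $d_2 = d_3 = d_4 = d_5 = 0$ then $I'$ is generated by the two elements $\overline{\partial_{x_1}\omega}\in e_1\kk Q'_2 e_1$ and $\overline{\partial_{x_6}\omega}\in e_1\kk Q'_2 e_2$, so Proposition~\ref{prop.gs}, applied to this generating set with relation matrix $\begin{psmatrix}1 & 1 \\ 0 & 0\end{psmatrix}t^2$, gives $h_{A'}(t)\ge\bigl|\bigl(I - \widetilde M t + \begin{psmatrix}1 & 1 \\ 0 & 0\end{psmatrix}t^2\bigr)^{-1}\bigr|$. A direct calculation shows the right-hand side has all nonnegative matrix coefficients through degree $5$, with $(1,1)$-coefficient equal to $17$ in degree $4$; therefore $\dim_{\kk} e_1 A'_4 e_1\ge 17 > 11 = \dim_{\kk} e_1 A_4 e_1$, contradicting the entrywise bound. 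If instead some $d_i\ne 0$ then $x_6 x_1 = 0$ in $A'$, i.e.\ $A'\iso\kk Q'/(\overline{\partial_{x_1}\omega},\overline{\partial_{x_6}\omega},x_6 x_1)$, and the Golod--Shafarevich bound alone is no longer strong enough. Here I would instead compute the beginning of a noncommutative Gr\"obner basis of this ideal for a degree-lexicographic order, subdividing according to which of the coefficients $c_0,c_i,e_i$ vanish (or, using upper-semicontinuity of Hilbert series, just treating generic $\omega$), and check that the Hilbert series of $A'$ still exceeds $\widetilde{h_A(t)}$ in some low degree. This second case is the main obstacle: it is genuinely ad hoc, the overlap resolutions must be tracked with care (a computer computation is convenient), and one must confirm that the phenomenon persists for every admissible $\omega$; but the underlying mechanism is the same as in the first case, namely that too few of the $\partial_a\omega$ survive the restriction for $A'$ to realize the Hilbert series forced on it by Theorem~\ref{thm.hilb2}.
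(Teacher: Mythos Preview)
Your approach is essentially the paper's: restrict to the full subquiver $Q'$ on vertices $1,2$, and show that $\dim_\kk e_1A'_4e_1$ exceeds the value $11$ predicted by $p(t)^{-1}$. The paper, however, first exploits the twisted-superpotential constraint before computing. Since $x_1$ and $x_6$ are the unique arrows of their combinatorial type, $\sigma(x_1)=\rho^{-1}x_1$ and $\sigma(x_6)=\theta x_6$ for nonzero scalars $\rho,\theta$, and invariance of $\omega$ under the cyclic twist forces $d_j=\rho c_j$ and $e_j=\theta c_j$ for $2\le j\le 5$. This collapses your two cases: your Case~1 (all $d_j=0$) would force all $c_j=0$ and hence $\overline{\partial_{x_6}\omega}=0$, which you yourself ruled out in degree~$2$. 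So only your Case~2 actually occurs, and with fewer free parameters than you allow. The paper then carries out exactly the Gr\"obner-basis computation you propose (ordering $x_2>x_3>x_4>x_5>x_6>x_1$), finds the single new overlap relation $x_1^3=0$, and counts $12$ surviving length-$4$ paths from $1$ to $1$.

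One caution about your semicontinuity shortcut: it is valid provided ``generic $\omega$'' means a generic \emph{twisted superpotential}, so that the constraints $e_j=\theta c_j$ are built in. If you instead treat $c_0,c_i,e_i$ as independent parameters, then the overlap $x_1x_2x_6$ produces an additional degree-$3$ Gr\"obner element $\sum_{j\ge 3}(e_j/e_2-c_j/c_2)\,x_1x_jx_6$, which vanishes precisely on the superpotential locus; for truly generic independent coefficients the quotient is smaller and the bound $\dim e_1A'_4e_1\ge 12$ could fail. So the superpotential constraint is not merely a convenience but is what makes the generic computation give the right answer.
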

\begin{proof}
If $A = \kk Q/(\partial_a \omega : a \in Q_1)$ is such a twisted Calabi-Yau algebra, where $\omega$ is a $\sigma$-twisted superpotential
such that $\sigma$ does not permute the vertices, then we know that $h_A(t) = I - Mt + M^T t^2 - I t^3$.
A calculation of this predictive Hilbert series shows that there should be 11 paths of length 4
from vertex 1 to itself modulo relations, in other words $\dim_{\kk} e_1 A_4 e_1 = 11$.
We will show by contrast that this space is at least $12$-dimensional.

Since the twist $\sigma$ does not permute the vertices, we must have $\sigma(x_1) = \rho^{-1} x_1$ and $\sigma(x_6) = \theta x_6$ for some nonzero constants $\rho, \theta$.  Thus $\omega = \omega' + \omega''$, where $\omega''$ is a sum of cycles which go through vertex $3$,
and
\begin{gather*}
\omega' =  c_1x_1^3 + c_2x_1x_2x_6 + c_3x_1x_3x_6 + c_4x_1x_4x_6 + c_5x_1x_5x_6 \\
+ \theta c_2 x_6 x_2 x_1 + \theta c_3 x_6 x_3 x_1 + \theta c_4 x_6 x_4 x_1 + \theta c_5 x_6 x_5 x_1 \\
+ \rho c_2 x_2 x_6 x_1 + \rho c_3 x_3 x_6 x_1 + \rho c_4 x_4 x_6 x_1 + \rho c_5 x_5 x_6 x_1.
\end{gather*}

Now consider the full subquiver $Q'$ on the vertices $1$ and $2$:
\[  \xymatrix{
{}_1\bullet \ar@/^/[rr]^{x_2,x_3,x_4,x_5} \ar@(ul,dl)_{x_1}[] & &  \bullet_2 \ar@/^/[ll]^{x_6} }\]
Let $A' = \kk Q'/I'$ be the restriction of $A$ to $\kk Q'$, as in Section~\ref{sec.key}.  It will suffice to show that $e_1 A'_4 e_2 \geq 12$,
since $A$ surjects onto $A'$.  Note that any cycle in $\omega''$ involves two arrows not in $Q'$,
and so any partial derivative of it still involves an arrow not in $Q'$, which becomes zero in the restriction.  Thus to calculate the relations
of $A'$ we can simply take partial derivatives of $\omega'$ with respect to arrows in $Q'$:
\begin{align*}
\partial_{x_1} \omega &= c_1 x_1^2 + c_2x_2x_6 + c_3x_3x_6 + c_4x_4x_6 + c_5x_5x_6, \\
\partial_{x_j} \omega &= \rho c_j x_6x_1 \;\;\; (j=2,\hdots,5), \\
\partial_{x_6} \omega &= \theta c_2x_1x_2 + \theta c_3x_1x_3 + \theta c_4x_1x_4 + \theta c_5x_1x_5.
\end{align*}
Note that all of the relations above given by $\partial x_j \omega$,
$j=2,\hdots,5$, are in fact the same because they differ only by a scalar, and similarly we can remove the $\theta$ from the last relation.

There are several cases depending on whether any of the $c_i$'s are $0$.  We do the following calculation assuming all $c_i$ are
nonzero.  The calculation of cases where some $c_i$'s may be $0$ is similar and we leave it to the reader.
Place an ordering on the generators $x_2 > x_3> x_4 > x_5> x_6 > x_1$
so that the leading terms are $x_2x_6, x_6x_1, x_1x_2$.
The overlap $x_6x_1x_2$ resolves. On the other hand, examining the overlap $x_2x_6 x_1$ we get
\begin{align*}
x_2(x_6x_1) & = 0, \\
(x_2x_6)x_1 & = -\frac{1}{c_2} \left( c_1x_1^2 + c_3x_3x_6 + c_4x_4x_6 + c_5x_5x_6\right)x_1
= -\frac{c_1}{c_2} x_1^3,
\end{align*}
forcing a new degree 3 relation $x_1^3=0$.
Finally, looking at the overlap $x_1x_2x_6$ gives
\begin{align*}
(x_1x_2)x_6  & = -\frac{1}{c_2}\left( c_3x_1x_3 + c_4x_1x_4 + c_5x_1x_5\right)x_6,  \\
x_1 (x_2x_6)  & = -\frac{1}{c_2} x_1\left( c_1x_1^2 + c_3x_3x_6 + c_4x_4x_6 + c_5x_5x_6\right),
\end{align*}
so using the relation $x_1^3 = 0$ this overlap resolves.  Similarly, the degree 4 overlaps $x_1^3 x_2$ and $x_6 x_1^3$ resolve,
so we have found a Gr{\"o}bner basis of relations for $A'$.
There are 29 paths in $Q'$ of length 4 from vertex $1$ to itself.
Removing those containing the leading terms $x_2x_6, x_6x_1, x_1x_2$ or $x_1^3$ results in 12 independent paths, which is
the desired contradiction.
\end{proof}

The following example is the most complicated one, and our analysis has been unable to conclusively eliminate the possibility
of a good twisted superpotential on it.  We can, however, prove that there is no good untwisted superpotential using a computational
method, as follows.  By \cite[Corollary 4.4]{Bo}, if there is any good superpotential on a quiver $Q$, a generic superpotential is good
(where generic means outside of a set of measure zero).  We employed the GAP \cite{GAP4} packages QPA \cite{QPA}
and GBNP \cite{GBNP} to compute the Hilbert series of a derivation-quotient algebra on $Q$ assuming a \textit{random} superpotential.
When a quiver $Q$ has no good superpotentials, this usually showed up in a consistent way, by the Hilbert series $h_A(t)$ being consistently
wrong for paths from $i$ to $j$ in the same degree, for a series of random choices.  For the quivers in several of the previous results, the results of this computer analysis then suggested those quivers were bad, and we later found the more formal reasons we presented above.   For the following quiver $Q$, however, the discrepancy in the Hilbert series doesn't show up until degree $6$.  We used the NCAlgebra package in Macaulay2, which was able to compute a Gr\"{o}bner basis for a generic superpotential, to show again that the Hilbert series is wrong.  Given that a twisted superpotential has quite a bit of extra freedom in its coefficients, this computationally intensive method does not easily apply to the twisted case.  Moreover, we do not have available a twisted version of Bocklandt's theorem which would allow us to claim that a generic twisted superpotential is good if any are (though such a theorem may well be true).
\begin{lemma}
\label{lem.outlier}
The following quiver $Q$ does not support an algebra $A$ satisfying Hypothesis \ref{hyp.scy} which is
(untwisted) Calabi-Yau of type $(M, I, 4)$, where $M$ is the incidence matrix of $Q$.
\[  \xymatrix{
& & {}_1\bullet \ar@/^/[dr]^{x_2} \ar@(ul,ur)^{x_1}[] & &  \\
& {}_3\bullet \ar@/^/[ur]^{x_5,x_6} \ar@(ul,dl)_{x_7}[] &
& \bullet_2  \ar@/^/[ll]^{x_4} \ar@(ur,dr)^{x_3}[] & &}
\]
\end{lemma}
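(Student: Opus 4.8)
The plan is to reduce the problem to a single explicit noncommutative Gr\"{o}bner basis computation by way of a genericity argument. Suppose for contradiction that $Q$ supports an algebra $A = \kk Q/(\partial_a\omega : a \in Q_1)$ satisfying Hypothesis~\ref{hyp.scy} which is (untwisted) Calabi-Yau of type $(M, I, 4)$, where $M = \begin{psmatrix}1 & 1 & 0 \\ 0 & 1 & 1 \\ 2 & 0 & 1\end{psmatrix}$ is the incidence matrix of $Q$ and $\omega$ is an ordinary superpotential of degree $4$. By Theorem~\ref{thm.hilb2}(1), the matrix Hilbert series of $A$ is $p(t)^{-1}$, where $p(t) = I - Mt + M^Tt^3 - It^4$. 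First I would compute the first seven matrix coefficients of $p(t)^{-1} = \sum_{n \geq 0} H_n t^n$ (an elementary finite calculation) and fix a pair of vertices $i, j$ together with the predicted value $N := (H_6)_{ij}$ of $\dim_\kk e_i A_6 e_j$; this is the degree in which the expected Hilbert series first disagrees with what the quiver can support.

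Next I would invoke Bocklandt's genericity theorem \cite[Corollary 4.4]{Bo}: letting $G$ denote the set of ordinary degree-$4$ superpotentials on $Q$ whose derivation-quotient algebra is Calabi-Yau of dimension $3$, if $G$ is nonempty then its complement in the (finite-dimensional) vector space of all degree-$4$ superpotentials on $Q$ has measure zero. I would pair this with a semicontinuity remark: for fixed $n$ the relations $\partial_a\omega$ depend $\kk$-linearly on the coefficients of $\omega$, so the subspace $e_i I_n e_j$ (with $I = (\partial_a\omega : a \in Q_1)$) is the image of a linear map whose matrix entries are linear in those coefficients; hence $\dim_\kk e_i I_n e_j$ is lower semicontinuous, and $\dim_\kk e_i A_n e_j = \dim_\kk e_i \kk Q_n e_j - \dim_\kk e_i I_n e_j$ is upper semicontinuous, in $\omega$. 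In particular $\dim_\kk e_i A_n e_j$ attains its minimum over all degree-$4$ superpotentials on a Zariski-dense open set, whose complement therefore has measure zero. Since a nonempty $G$ also has measure-zero complement, these two sets intersect, and on $G$ one has $\dim_\kk e_i A_n e_j = (H_n)_{ij}$ by Theorem~\ref{thm.hilb2}(1) (the Nakayama automorphism of an untwisted Calabi-Yau algebra fixes each vertex, so the type is $(M,I,4)$); so \emph{if $G \neq \emptyset$, then the minimum of $\dim_\kk e_i A_n e_j$ over all degree-$4$ superpotentials on $Q$ equals $(H_n)_{ij}$ for every choice of $i, j, n$.}

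Finally I would write down one explicit superpotential $\omega_0$ of degree $4$ on $Q$, say with small integer coefficients chosen at random, and compute a Gr\"{o}bner basis of $(\partial_a\omega_0 : a \in Q_1)$ far enough to read off $\dim_\kk e_i A_6 e_j$ for the algebra $A_0 = \kk Q/(\partial_a\omega_0)$ and the pair $i, j$ fixed above (we carried this out with the NCAlgebra package in Macaulay2; it may even suffice to pass first to a suitable full subquiver, as in Lemma~\ref{lem.sp1}). The outcome is that this dimension is \emph{strictly smaller} than $N$. Since the minimum of $\dim_\kk e_i A_6 e_j$ over all degree-$4$ superpotentials is at most the value realized by $\omega_0$, that minimum is $< N = (H_6)_{ij}$, and the displayed implication of the previous paragraph then forces $G = \emptyset$. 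This says precisely that $Q$ supports no ordinary superpotential of degree $4$ whose derivation-quotient algebra is Calabi-Yau of dimension $3$, contradicting the assumed existence of $A$ and proving the lemma.

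The step I expect to be the main obstacle is the Gr\"{o}bner basis computation: $Q$ has seven arrows and $\omega_0$ has degree $4$, so reaching degree $6$ produces a large number of overlaps, and arranging matters so that $\dim_\kk e_i A_6 e_j$ can be extracted reliably is the delicate point. I would also flag in the statement that this argument only handles the \emph{untwisted} case: Bocklandt's theorem is available only for ordinary superpotentials, a twisted superpotential has strictly more free parameters, and no twisted analogue of \cite[Corollary 4.4]{Bo} is currently known — which is exactly why the corresponding starred type in Theorem~\ref{thm.3vert} is not eliminated outright.
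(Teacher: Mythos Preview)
Your overall strategy---Bocklandt's genericity theorem plus a Hilbert-series mismatch in degree $6$---is exactly the paper's, and your semicontinuity observation is correct. But the final step has the direction of the discrepancy reversed, and this is fatal. For the pair $(i,j)=(3,1)$ the value predicted by $p(t)^{-1}$ is $N=(H_6)_{3,1}=13$, whereas for the \emph{generic} superpotential (algebraically independent coefficients $c_1,\dots,c_9$) one computes $\dim_\kk e_3A_6e_1=14$. The discrepancy is that the generic dimension exceeds $N$, not the other way round.

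This matters because, by your own upper-semicontinuity argument, the generic value is the \emph{minimum} of $\dim_\kk e_3A_6e_1$ over all superpotentials. Hence no $\omega_0$ whatsoever can produce a value below that minimum: you will never find $\omega_0$ with $\dim_\kk e_3A_6e_1<13$, and your proposed contradiction cannot arise. Worse, even after flipping the sign, a single specific $\omega_0$ furnishes only an \emph{upper} bound on the minimum ($\min\le f(\omega_0)=14$), which does not exclude $\min=13$; so one random integer choice does not suffice.

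The fix is to compute the dimension for the genuinely generic superpotential (working over $\kk(c_1,\dots,c_9)$, as the paper does) and obtain $14$. One then concludes either via Bocklandt (if $G\neq\emptyset$ then the generic $\omega$ lies in $G$, forcing the generic dimension to be $13$; contradiction), which is the paper's route, or directly from your semicontinuity remark (the minimum is $14$, so every $\omega$ gives $\dim\ge14>13$, hence no $\omega$ is Calabi-Yau).
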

\begin{proof}
The predictive Hilbert series $h_A(t)$ suggests there should be
13 paths of length 6 from vertex 3 to vertex 1.
A generic superpotential has the form
\begin{align*}
\omega &=
	c_1x_1^4 + c_2x_3^4 + c_3x_7^4
     + c_4x_1x_2x_4x_5 + c_5x_3x_4x_5x_2 + c_6x_7x_5x_2x_4 \\
     &\quad + c_7x_1x_2x_4x_6 + c_8x_3x_4x_6x_2 + c_9x_7x_6x_2x_4,
\end{align*}
for some $c_i \in \kk$, which we can assume are algebraically independent.
Taking cyclic derivatives gives seven degree 3 relations with one inclusion ambiguity to resolve.
There is one degree 4 overlap ambiguity that does not resolve
and six degree 5 overlap ambiguities that do not resolve.
This leaves two degree 6 overlap ambiguities from vertex 3 to vertex 1.
Both of these resolve and this leaves 14 paths of length 6 from vertex 3 to vertex 1.
So a generic superpotential on $Q$ is not good.  By \cite[Corollary 4.4]{Bo}, this means that no superpotential on $Q$
is good.
\end{proof}

Finally, we have the following isolated example which we can eliminate because it predicts the wrong growth.
\begin{lemma}
\label{lem.gkdim}
Suppose $Q$ is the quiver with adjacency matrix
\[
M_Q = \begin{psmatrix}2 & 3 & 0 \\ 1 & 0 & 2 \\ 4 & 0 & 0\end{psmatrix}.
\]
If $A=\kk Q/(\partial_a \omega : a \in Q_1, \deg(\omega)=3)$,
then the predictive Hilbert series indicates $\GKdim(A) \geq 5$.
Hence, $Q$ does not support an algebra satisfying Hypothesis \ref{hyp.scy}.
\end{lemma}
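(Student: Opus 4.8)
The plan is to pit the would-be matrix Hilbert series against the $\GKdim$ formula of Theorem~\ref{thm.hilb2}(2). Suppose, toward a contradiction, that a degree-$3$ twisted superpotential $\omega$ on $Q$ makes $A = \kk Q/(\partial_a \omega : a \in Q_1)$ into an algebra satisfying Hypothesis~\ref{hyp.scy}. A short check shows that $M_Q$ commutes with no nontrivial permutation matrix, so by Theorem~\ref{thm.hilb2}(1) the Nakayama permutation of $A$ is trivial and $A$ has type $(M_Q, I, 3)$; and since $M_Q$ appears on the list of Lemma~\ref{lem.goodpoly}, all zeros of $\det p(t)$ are roots of unity, where $p(t) = I - M_Q t + M_Q^T t^2 - I t^3$. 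Theorem~\ref{thm.hilb2}(2) then gives $\GKdim(A) = m_d - m_a$, with $m_d$ the order of vanishing of $\det p(t)$ at $t = 1$ and $m_a$ the maximal order of vanishing at $t = 1$ among the entries of $\adj p(t)$. It therefore suffices to prove $m_d - m_a \geq 5$, since this contradicts $\GKdim(A) = 3$.

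First I would expand the $3 \times 3$ determinant $\det p(t)$ directly; the result is a degree-$9$ antipalindromic polynomial which, after repeatedly dividing out the factor $1-t$, is seen to be
\[
\det p(t) = (1-t)^5 (1+t)^2 (1+t+t^2),
\]
a product of cyclotomic polynomials (as it must be, given how $M_Q$ was produced in Lemma~\ref{lem.goodpoly}); in particular $m_d = 5$. Next I would evaluate the adjugate at $t = 1$: one finds $p(1) = \begin{psmatrix} 0 & -2 & 4 \\ 2 & 0 & -2 \\ -4 & 2 & 0 \end{psmatrix}$, which has rank $2$, and a cofactor computation gives $\adj p(1) = \begin{psmatrix} 4 & 8 & 4 \\ 8 & 16 & 8 \\ 4 & 8 & 4 \end{psmatrix}$, all nine of whose entries are nonzero; hence $m_a = 0$. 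Consequently $\GKdim(A) = 5$, the desired contradiction. Combined with Lemma~\ref{lem.goodpoly} — which leaves $(M_Q, I, 3)$ as the only conceivable type carried by $Q$, as $M_Q$ does not occur, up to permutation and transpose, on the degree-$4$ list — this shows that $Q$ supports no algebra satisfying Hypothesis~\ref{hyp.scy}.

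The one genuinely tedious point is the determinant expansion, which is elementary but error-prone; I would cross-check it by confirming that $\det p(t)$ and $(1-t)^5(1+t)^2(1+t+t^2)$ agree at a handful of integer values of $t$ (equivalently, via a brief computer algebra verification), and likewise re-verify the entries of $\adj p(1)$. Beyond that, everything is a direct application of Theorem~\ref{thm.hilb2}(2) to a matrix polynomial small enough to treat by hand, so I do not anticipate any conceptual difficulty.
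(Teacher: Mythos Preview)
Your proof is correct and follows essentially the same strategy as the paper: factor $\det p(t)$ to find $m_d = 5$, show the adjugate does not vanish at $t=1$ so that $m_a = 0$, and invoke the $\GKdim$ formula to obtain $\GKdim(A) = 5 \neq 3$. The only difference is cosmetic: the paper computes the full polynomial $s(t) = (t^2+3)(3t^2+1)(t+1)^2$ equal to the sum of the entries of $\adj p(t)$ and checks $s(1) \neq 0$, whereas you evaluate $\adj p(1)$ entrywise; your route is slightly more economical and appeals only to the version of the formula already recorded in Theorem~\ref{thm.hilb2}(2).
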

\begin{proof}
Let $p(t)$ denote the matrix polynomial corresponding to $M$. Then
\[\det(p(t)) = -(t^2+t+1) (t+1)^2 (t-1)^5\]
and
\[ s(t) = (t^2+3)(3t^2+1)(t+1)^2,\]
where $s(t)$ is the sum of the entries in $\adj(p(t))$.
Thus the (non-matrix) Hilbert series of $A$ is $s(t)/(\det p(t))$.
Thus, if $m_d$ (resp. $m_s$) denotes the multiplicity of vanishing of $\det(p(t))$
(resp. $s(t)$) at $t=1$, then by \cite[Lemma 2.7]{RR1} we have
$\GKdim(A) = m_d-m_s = 5$.
\end{proof}
\begin{remark}
Theoretically, one should be able to rule out the quiver in Lemma \ref{lem.gkdim} again by using Gr\"obner basis methods.
The Hilbert series computed using a random superpotential consistently
fails to match that of the predictive Hilbert series.
However, the quiver is too large for Macaulay2  to handle.
\end{remark}

\subsection{The final list}

Combining the results from this section, we have our final main theorem.
\begin{proposition}
\label{prop.3vert3}
Let $A \iso \kk Q/(\partial_a \omega : a \in Q_1)$ satisfy Hypothesis \ref{hyp.scy} with $|Q_0| = 3$,
where $A$ is of type $(M, I, s = \deg(\omega))$.   Then up to relabeling of the vertices in $Q$ or taking an opposite quiver,
$M$ is one of the matrices in the list below.  The indeterminates $a,b,c$ are positive integers satisfying the equation $a^2+b^2+c^2=abc$.
\begin{align*}
&\deg(\omega)=3: \quad
\begin{psmatrix}2 & 1 & 1 \\ 1 & 0 & 1 \\ 1 & 1 & 0\end{psmatrix},
\begin{psmatrix}2 & 1 & 1 \\ 1 & 1 & 0 \\ 1 & 0 & 1\end{psmatrix},
\begin{psmatrix}1 & 1 & 1 \\ 1 & 1 & 1 \\ 1 & 1 & 1\end{psmatrix},
\begin{psmatrix}1 & 1 & 1 \\ 1 & 2 & 0 \\ 1 & 0 & 2\end{psmatrix},
\begin{psmatrix}0 & a & 0 \\ 0 & 0 & b \\ c & 0 & 0\end{psmatrix}, \\
&\deg(\omega)=4: \quad
\begin{psmatrix}1 & 1 & 0 \\ 0 & 1 & 1 \\ 1 & 0 & 1\end{psmatrix},
\begin{psmatrix}1 & 1 & 1 \\ 1 & 0 & 0 \\ 1 & 0 & 0\end{psmatrix},
\begin{psmatrix}0 & 1 & 1 \\ 1 & 0 & 1 \\ 1 & 1 & 0\end{psmatrix},
\begin{psmatrix}1 & 0 & 1 \\ 0 & 1 & 1 \\ 1 & 1 & 0\end{psmatrix},
\begin{psmatrix}1 & 1 & 0 \\ 0 & 1 & 1 \\ 2 & 0 & 1\end{psmatrix}^*.
\end{align*}
Moreover, every matrix above, except possibly the final starred one, does occur in the type for some such algebra $A$.
The starred one does not occur if $\omega$ is an untwisted superpotential, so there is no Calabi-Yau algebra on that quiver.
\end{proposition}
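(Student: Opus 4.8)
The plan is to assemble Proposition~\ref{prop.3vert3} from the ingredients developed earlier in this section, so the work is largely organizational rather than new mathematics. The completeness half --- that every $M$ arising in the type $(M,I,s)$ of an algebra satisfying Hypothesis~\ref{hyp.scy} with $|Q_0|=3$ appears, up to relabeling of vertices and up to transpose, on the displayed list --- starts from Lemma~\ref{lem.goodpoly}, which confines $M$ to a long but explicit list. One then walks through that list and matches each entry either to the final list or to one of the subsequent elimination results: the lemma immediately following Lemma~\ref{lem.goodpoly} removes the matrices that are normal with the wrong spectral radius, or whose candidate Hilbert series $(I-Mt+M^Tt^{s-1}-It^s)^{-1}$ has a negative coefficient; Corollary~\ref{cor.partition} removes the partitionable families (including the $\alpha,x$-family and the $\begin{psmatrix}3 & x & 0 \\ 0 & 0 & 2 \\ x & 0 & 0\end{psmatrix}$-family); the lemma following that corollary removes $\begin{psmatrix}1 & 2 & 1 \\ 1 & 0 & 2 \\ 2 & 0 & 0\end{psmatrix}$; Lemma~\ref{lem.sp1} removes the quiver with incidence matrix $\begin{psmatrix}1 & 4 & 0 \\ 1 & 0 & 2 \\ 5 & 0 & 0\end{psmatrix}$; and Lemma~\ref{lem.gkdim} removes $\begin{psmatrix}2 & 3 & 0 \\ 1 & 0 & 2 \\ 4 & 0 & 0\end{psmatrix}$. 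Checking that this accounting is exhaustive --- no entry of the Lemma~\ref{lem.goodpoly} list is left over, none is counted twice, and the ``up to permutation and transpose'' identifications are applied consistently --- finishes this half.

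For the realizability half one produces, for each matrix on the final list other than the starred one, an algebra satisfying Hypothesis~\ref{hyp.scy} of that type. Most entries are handled exactly as in the earlier sections, by taking a skew group algebra $R\#\kk G$ of a finite group $G$ acting by graded automorphisms on an Artin-Schelter regular algebra $R$ of dimension $3$, passing to the Morita-equivalent elementary algebra $\kk Q/I$ on the (appropriate component of the) McKay quiver, and reading off the type --- in particular verifying that the $\hdet$-induced permutation of the vertices is trivial, so that $P=I$ --- by the method of Section~\ref{sec.motive}; these cases can be recorded in a table. The two types $\begin{psmatrix}1 & 1 & 1 \\ 1 & 1 & 1 \\ 1 & 1 & 1\end{psmatrix}$ and $\begin{psmatrix}1 & 1 & 1 \\ 1 & 2 & 0 \\ 1 & 0 & 2\end{psmatrix}$ with $s=3$ instead arise as trivial Ore extensions $B[t]=B[t;\id,0]$ of the dimension-$2$ twisted Calabi-Yau algebras $B$ on the quivers of incidence matrices $\begin{psmatrix}0 & 1 & 1 \\ 1 & 0 & 1 \\ 1 & 1 & 0\end{psmatrix}$ and $\begin{psmatrix}0 & 1 & 1 \\ 1 & 1 & 0 \\ 1 & 0 & 1\end{psmatrix}$ (the latter being the component $Q'$ of Example~\ref{ex.component}); since the Nakayama automorphism of $B$ fixes the vertices and $\sigma=\id$ does too, the Ore-extension formulas of Section~\ref{sec.motive} give type $(M'+I,I,3)$. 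Finally, the family $\begin{psmatrix}0 & a & 0 \\ 0 & 0 & b \\ c & 0 & 0\end{psmatrix}$ with $a^2+b^2+c^2=abc$ is precisely the family realized by the quiver-mutation construction of Proposition~\ref{prop.mutation}.

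It remains to treat the starred matrix $\begin{psmatrix}1 & 1 & 0 \\ 0 & 1 & 1 \\ 2 & 0 & 1\end{psmatrix}$. It survives all of the eliminations above --- all of the numerical conditions of Theorem~\ref{thm.hilb2} hold for it --- so it stays on the list as a candidate type; and Lemma~\ref{lem.outlier} shows that its quiver supports no \emph{untwisted} Calabi-Yau derivation-quotient algebra, hence in particular no Calabi-Yau algebra of that type.

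The step I expect to be the real obstacle is not any single argument but the bookkeeping of the completeness half: one must verify that the union of the final list with the matrices dispatched by the cited lemmas genuinely exhausts the Lemma~\ref{lem.goodpoly} list, with every entry placed exactly once and with the ``up to permutation and transpose'' identifications handled consistently. On the realizability side the delicate points are pinning down the correct triple $(R,G,\text{action})$ in each skew-group case so that both the McKay quiver and the trivial homological determinant come out as required, and --- as in the proof of Proposition~\ref{prop.mutation} --- confirming via Theorem~\ref{thm.hilb2}(2) that each constructed algebra actually has $\GKdim=3$, not merely finite GK dimension.
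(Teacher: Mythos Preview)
Your proposal is correct and follows essentially the same route as the paper: the completeness half is exactly the bookkeeping exercise you describe (Lemma~\ref{lem.goodpoly} minus the eliminations), and the realizability half uses the same mix of skew group algebras, Ore extensions of dimension-$2$ examples, and Proposition~\ref{prop.mutation} for the Markov family, with Lemma~\ref{lem.outlier} handling the starred case. The only minor deviation is that you realize the all-ones matrix $\begin{psmatrix}1&1&1\\1&1&1\\1&1&1\end{psmatrix}$ with $s=3$ as an Ore extension $B[t]$ of $\kk[x,y]\#\kk\ZZ_3$, whereas the paper obtains it directly as a $\ZZ_3$-skew group algebra of $\kk[x,y,z]$ with the $(1,\zeta,\zeta^2)$ action (Table~\ref{table.3ver3}); both constructions work, and the paper reserves the Example~\ref{ex.component} Ore-extension trick only for $\begin{psmatrix}1&1&1\\1&2&0\\1&0&2\end{psmatrix}$ and uses that same example with a cubic regular $R$ to realize the $s=4$ type $\begin{psmatrix}1&0&1\\0&1&1\\1&1&0\end{psmatrix}$.
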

\begin{proof}
The matrices listed are precisely the ones remaining from the long list given
in Lemma~\ref{lem.goodpoly} after removing the ones that cannot occur
for one of the reasons given in the preceding results.  We proved that the final starred example cannot occur if $\omega$
is untwisted, by Lemma~\ref{lem.outlier}.  So it remains only to prove that the other examples all occur.

A number of them occur as (algebras Morita equivalent to) skew group algebras $R \# \kk G$ where $R$ is Artin-Schelter regular
of dimension $3$ and $G = \mb{Z}_3$ or $G = S_3$.  See Tables~\ref{table.3ver3} and \ref{table.3ver4}.

We essentially constructed an example of type $(M', I, 4)$ where $M' = \begin{psmatrix}1 & 0 & 1 \\ 0 & 1 & 1 \\ 1 & 1 & 0\end{psmatrix}$
in Example~\ref{ex.component}.  Namely, let the group $G$ of Example~\ref{ex.component}
act on a cubic Artin-Schelter regular algebra $R$ of dimension $3$, and take an algebra $A$ Morita equivalent to a component of $R \# \kk G$.
For example, if one takes $R = \kk \langle x, y \rangle/(yx^2 + x^2y, y^2 x + xy^2)$, then the action of $G$ on $\kk x + \kk y$
given in Example~\ref{ex.component} respects the relations and thus gives an action of $G$ on $R$.  Since $R$ comes from
the superpotential $x^2 y^2 + x y^2 x + y^2 x^2 + y x^2 y$, it is easy to see that the action of $G$ has trivial homological determinant
and hence $\mu_A$ acts trivially on the vertices as desired.

As was also noted in the proof of Proposition~\ref{prop.3vert2}, the same group acting on $\kk[x,y]$ produces a Calabi-Yau algebra $B$ of dimension $2$ on the same quiver $Q'$ with incidence matrix $M$.  Then $A = B[t]$ is Calabi-Yau of dimension $3$ and type $(M = M' +I, I, 3)$ where $M = \begin{psmatrix}1 & 1 & 1 \\ 1 & 2 & 0 \\ 1 & 0 & 2\end{psmatrix}$, by the discussion of Ore extensions in Section~\ref{sec.motive}.

That leaves $M = \begin{psmatrix}0 & a & 0 \\ 0 & 0 & b \\ c & 0 & 0\end{psmatrix}$ where $a^2+b^2+c^2=abc$.
When $a = b = c = 3$, we obtain this type as a McKay quiver in Table~\ref{table.3ver3}.  By Proposition \ref{prop.mutation},
we get Calabi-Yau algebras of type $(M, I, 3)$ for all of the other $M$ by applying a series of quiver mutations.
\end{proof}

\begin{table}
\caption[3ver3]{$G = \mb{Z}_3 = \langle \sigma \rangle$, type $\left(M,\begin{psmatrix}1 &0 & 0 \\ 0 & 1 & 0 \\ 0 &  0 & 1 \end{psmatrix},\deg(\omega)\right)$
of $R \# \kk G$.   Let $\zeta$ be a primitive third root of unity, and let $\omega'(E)= y^3x + \zeta y^2xy + \zeta^2 yxy^2 + xy^3 + x^4$.}\label{table.3ver3}
\begin{tabular}{ccccc}\hline
$\sigma$ & $\omega$ & $M$ \\ \hline
$\operatorname{diag}(\zeta^2, \zeta^2, \zeta^2)$ & $(xyz +  zxy + yzx) +(- xzy - yxz - zyx)$ & $\left(\begin{smallmatrix}0  & 3 & 0 \\ 0 & 0 & 3 \\ 3 & 0 & 0 \end{smallmatrix}\right)$ \\
$\operatorname{diag}(1, \zeta, \zeta^2)$ & $(xyz +  zxy + yzx) +(- xzy - yxz - zyx)$ & $\left(\begin{smallmatrix}1  & 1 & 1 \\ 1 & 1 & 1 \\ 1 & 1 & 1 \end{smallmatrix}\right)$ \\
$\operatorname{diag}(\zeta, \zeta^2)$ & $y^2x^2 + xy^2x + x^2y^2 + yx^2y$ & $\left(\begin{smallmatrix}0   & 1 & 1 \\ 1 & 0 & 1 \\ 1 &1 & 0 \end{smallmatrix}\right)$ \\
$\operatorname{diag}(1, \zeta^2)$ & $\omega'(E)$  & $\left(\begin{smallmatrix}1   & 1 & 0 \\ 0 & 1 & 1 \\ 1 &0 & 1 \end{smallmatrix}\right)$
\end{tabular}
\end{table}

\begin{table}
\caption[3ver4]{$G = S_3 = \langle \sigma, \tau \rangle$, $\dim V = 3$, where $\sigma = (123)$ and $\tau = (12)$,
type $\left(M,\begin{psmatrix}1 & 0 & 0 \\ 0 & 1 & 0 \\ 0 &  0 & 1 \end{psmatrix},\deg(\omega)\right)$ of $R \# \kk G$}\label{table.3ver4}
\begin{tabular}{cccc}\hline
$\sigma$ & $\tau$ & $\omega$ & $M$ \\ \hline
$\left(\begin{smallmatrix}0  & 0 & 1 \\ 1 & 0 & 0 \\ 0 & 1 & 0 \end{smallmatrix}\right)$ &
$\left(\begin{smallmatrix}0 & 1 &  0 \\1 & 0 & 0 \\ 0 & 0 & 1 \end{smallmatrix}\right)$ & $(xyz +  zxy + yzx) +(xzy+ yxz + zyx)$ & $\left(\begin{smallmatrix}2 & 1  &  1 \\1 & 1 & 0 \\ 1 & 0 & 1 \end{smallmatrix}\right)$ \\
$\left(\begin{smallmatrix}0  & 0 & 1 \\ 1 & 0 & 0 \\ 0 & 1 & 0 \end{smallmatrix}\right)$ &
$\left(\begin{smallmatrix}0 & -1 &  0 \\-1 & 0 & 0 \\ 0 & 0 & -1 \end{smallmatrix}\right)$ & $(xyz +  zxy + yzx) +(- xzy - yxz - zyx)$ & $\left(\begin{smallmatrix} 2 &1  &  1 \\1 & 0 & 1 \\ 1 & 1 & 0 \end{smallmatrix}\right)$ \\
 $\operatorname{diag}(\zeta, \zeta^2)$ &
$\left(\begin{smallmatrix}0 & 1 \\1 & 0 \end{smallmatrix}\right)$ & $y^2x^2 + xy^2x + x^2y^2 + yx^2y$ &
$\left(\begin{smallmatrix}1 &1  &  1 \\ 1 & 0 & 0 \\ 1 & 0 & 0 \end{smallmatrix}\right)$
\end{tabular}
\end{table}

\appendix
\section{McKay Quivers}
\label{sec:app}

We list all McKay quivers $Q$ with at most three vertices for groups $G$ acting on
the polynomial ring with two or three variables, where the degree one elements give a representation $V$
of $G$.  Recall that by our convention in this paper, if we label the vertices of the quiver by the
the irreducible characters $\chi_i$ of $G$, then if $V$ has character $\rho$, and we write
$\rho \chi_j= \sum_i a_{ij}  \chi_i$, then $M = M_Q = (a_{ij})$ is the incidence matrix of $Q$.

Since we want at most three vertices, the McKay quiver must be derived from a group with
at most three irreducible representations.
Hence, in this case the possible groups are $\ZZ_2$, $\ZZ_3$ and $S_3$.
Moreover, the defining representation $V$ must be either two- or three-dimensional, respectively.
For reference, we include the character tables of our groups below.
We represent the character $\rho$ of $V$ as a tuple $(a_1, \dots, a_m)$ where $\rho = \chi_{a_1} + \dots + \chi_{a_m}$.  Below, $\zeta$ represents a third root of unity.

\begin{multicols}{3}
$\ZZ_2 = \langle g \rangle$

\begin{tabular}{c|cc}
& $e$ & $g$ \\ \hline
$\chi_1$ & 1 & 1 \\
$\chi_2$ & 1 & -1
\end{tabular}

\columnbreak

$G=\ZZ_2$, $\dim V = 2$.

\begin{tabular}{cc}\hline
$\rho$ & $M_Q$ \\ \hline
( 1, 1 ) & $\left(\begin{smallmatrix}2 & 0 \\ 0 & 2 \end{smallmatrix}\right)$ \\
( 1, 2 ) & $\left(\begin{smallmatrix}1 & 1 \\ 1 & 1 \end{smallmatrix}\right)$\\
( 2, 2 ) & $\left(\begin{smallmatrix}0 & 2 \\ 2 & 0 \end{smallmatrix}\right)$
\end{tabular}

\columnbreak

$G=\ZZ_2$, $\dim V = 3$.

\begin{tabular}{cc}\hline
$\rho$ & $M_Q$ \\ \hline
( 1, 1, 1 ) & $\left(\begin{smallmatrix}3 & 0 \\ 0 & 3 \end{smallmatrix}\right)$\\
( 1, 1, 2 ) & $\left(\begin{smallmatrix}2 & 1 \\ 1 & 2 \end{smallmatrix}\right)$\\
( 1, 2, 2 ) & $\left(\begin{smallmatrix}1 & 2 \\ 2 & 1 \end{smallmatrix}\right)$\\
( 2, 2, 2 ) & $\left(\begin{smallmatrix}0 & 3 \\ 3 & 0 \end{smallmatrix}\right)$
\end{tabular}
\end{multicols}

\begin{multicols}{3}
$\ZZ_3=\langle g \rangle$

\begin{tabular}{c|ccc}
			& $e$ & $g$ & $g^2$ \\ \hline
$\chi_1$		& 1	& 1	& 1 \\
$\chi_2$		& 1	& $\zeta$	& $\zeta^2$ \\
$\chi_3$		& 1	& $\zeta^2$	 & $\zeta$
\end{tabular}

\columnbreak

$G=\ZZ_3$ $\dim V = 2$.

\begin{tabular}{cc}\hline
$\rho$ & $M_Q$ \\ \hline
( 1, 1 ) & $\left(\begin{smallmatrix}2 & 0 & 0 \\ 0 & 2 & 0  \\ 0 & 0 & 2\end{smallmatrix}\right)$ \\
( 1, 2 ) & $\left(\begin{smallmatrix}1 & 0 & 1 \\ 1 & 1 & 0 \\ 0 & 1 & 1\end{smallmatrix}\right)$ \\
( 1, 3 ) & $\left(\begin{smallmatrix}1 & 1 & 0 \\ 0 & 1 & 1 \\ 1 & 0 & 1\end{smallmatrix}\right)$\\
( 2, 2 ) & $\left(\begin{smallmatrix}0 & 0 & 2 \\ 2 & 0 & 0 \\ 0 & 2 & 0\end{smallmatrix}\right)$\\
( 2, 3 ) & $\left(\begin{smallmatrix}0 & 1 & 1 \\ 1 & 0 & 1 \\ 1 & 1 & 0\end{smallmatrix}\right)$\\
( 3, 3 ) & $\left(\begin{smallmatrix}0 & 2 & 0 \\ 0 & 0 & 2 \\ 2 & 0 & 0\end{smallmatrix}\right)$
\end{tabular}

\columnbreak

$G=\ZZ_3$, $\dim V = 3$.

\begin{tabular}{cc} \hline
$\rho$ & $M_Q$ \\ \hline
( 1, 1, 1 ) & $\left(\begin{smallmatrix}3 & 0 & 0 \\ 0 & 3 & 0 \\ 0 & 0 & 3\end{smallmatrix}\right)$\\
( 1, 1, 2 ) & $\left(\begin{smallmatrix}2 & 0 & 1 \\ 1 & 2 & 0 \\ 0 & 1 & 2\end{smallmatrix}\right)$\\
( 1, 1, 3 ) & $\left(\begin{smallmatrix}2 & 1 & 0 \\ 0 & 2 & 1 \\ 1 & 0 & 2\end{smallmatrix}\right)$\\
( 1, 2, 2 ) & $\left(\begin{smallmatrix}1 & 0 & 2 \\ 2 & 1 & 0 \\ 0 & 2 & 1\end{smallmatrix}\right)$\\
( 1, 2, 3 ) & $\left(\begin{smallmatrix}1 & 1 & 1 \\ 1 & 1 & 1 \\ 1 & 1 & 1\end{smallmatrix}\right)$\\
( 1, 3, 3 ) & $\left(\begin{smallmatrix}1 & 2 & 0 \\ 0 & 1 & 2 \\ 2 & 0 & 1\end{smallmatrix}\right)$\\
( 2, 2, 2 ) & $\left(\begin{smallmatrix}0 & 0 & 3 \\ 3 & 0 & 0 \\ 0 & 3 & 0\end{smallmatrix}\right)$\\
( 2, 2, 3 ) & $\left(\begin{smallmatrix}0 & 1 & 2 \\ 2 & 0 & 1 \\ 1 & 2 & 0\end{smallmatrix}\right)$\\
( 2, 3, 3 ) & $\left(\begin{smallmatrix}0 & 2 & 1 \\ 1 & 0 & 2 \\ 2 & 1 & 0\end{smallmatrix}\right)$\\
( 3, 3, 3 ) & $\left(\begin{smallmatrix}0 & 3 & 0 \\ 0 & 0 & 3 \\ 3 & 0 & 0\end{smallmatrix}\right)$
\end{tabular}
\end{multicols}

\begin{multicols}{3}
$S_3=\langle (12),(123) \rangle$

\begin{tabular}{c|ccc}
			& $e$ & $(12)$ & $(123)$ \\ \hline
$\chi_1$	& 2	& $0$ & $-1$ 	 \\
$\chi_2$	& 1	& $-1$	& $1$	\\
$\chi_3$	& 1	& 1	& 1
\end{tabular}

\columnbreak

$G=S_3$, $\dim V = 2$.

\begin{tabular}{cc}\hline
$\rho$ & $M_Q$ \\ \hline
(1)    & $\left(\begin{smallmatrix}1 & 1 & 1 \\ 1 & 0 &0 \\ 1 & 0 & 0\end{smallmatrix}\right)$\\
( 3, 3 ) & $\left(\begin{smallmatrix}2 & 0 & 0 \\ 0 & 2 & 0 \\ 0 & 0 & 2\end{smallmatrix}\right)$\\
( 2, 3 ) & $\left(\begin{smallmatrix}2 & 0 & 0 \\ 0 & 1 & 1 \\ 0 & 1&1\end{smallmatrix}\right)$\\
( 2, 2 ) & $\left(\begin{smallmatrix}2 & 0 & 0 \\ 0 & 0 & 2 \\ 0 & 2 &0 \end{smallmatrix}\right)$
\end{tabular}

\columnbreak

$G=S_3$, $\dim V = 3$.

\begin{tabular}{cc}\hline
$\rho$ & $M_Q$ \\ \hline
( 1, 3 ) 	& $\left(\begin{smallmatrix}2 & 1 & 1 \\ 1 & 1 & 0 \\ 1 & 0 & 1\end{smallmatrix}\right)$\\
( 1, 2 ) 	& $\left(\begin{smallmatrix}2 & 1 & 1 \\ 1 & 0 & 1 \\ 1 & 1 & 0\end{smallmatrix}\right)$\\
(3, 3, 3 ) & $\left(\begin{smallmatrix}3 & 0 & 0 \\ 0 & 3 & 0 \\ 0 & 0 & 3\end{smallmatrix}\right)$\\
( 2, 3, 3 ) & $\left(\begin{smallmatrix}3 & 0 & 0 \\ 0 & 2 & 1 \\ 0 & 1 & 2\end{smallmatrix}\right)$\\
( 2, 2, 3 ) & $\left(\begin{smallmatrix}3 & 0 & 0 \\ 0 & 1 & 2 \\ 0 & 2 & 1\end{smallmatrix}\right)$\\
( 2, 2, 2 ) & $\left(\begin{smallmatrix}3 & 0 & 0 \\ 0 & 0 & 3 \\ 0 & 3 & 0\end{smallmatrix}\right)$
\end{tabular}

\end{multicols}


\bibliographystyle{plain}

\end{document}